\numberwithin{equation}{section}
\newtheorem{thm}{Theorem}
\numberwithin{thm}{section}
\newtheorem{prop}[thm]{Proposition}
\newtheorem{lem}[thm]{Lemma}
\newtheorem{cor}[thm]{Corollary}
\theoremstyle{definition}
\newtheorem{defn}[thm]{Definition}
\newtheorem{exmp}[thm]{Example}
\theoremstyle{remark}
\newtheorem{rem}[thm]{Remark}
\providecommand{\BBb}[1]{{\mathbb{#1}}}
\providecommand{\cal}[1]{{\mathcal{#1}}}   
\newcommand{\C}{{\BBb C}}
\newcommand{\dual}[2]{\langle\,#1,\,#2\,\rangle}
\newcommand{\im}{\operatorname{i}}
\newcommand{\lap}{\operatorname{\Delta}}
\newcommand{\lOm}{\ell_{\Omega}}
\newcommand{\mlap}{-\!\operatorname{\Delta}}
\newcommand{\nrm}[2]{\|#1\|_{#2}}
\newcommand{\Nrm}[2]{\bigl\|#1\bigr\|_{#2}}
\newcommand{\order}{\operatorname{order}}
\newcommand{\op}[1]{\operatorname{#1}}
\newcommand{\OP}{\operatorname{OP}}
\newcommand{\N}{\BBb N}
\newcommand{\R}{{\BBb R}}
\newcommand{\Rn}{{\BBb R}^{n}}
\newcommand{\rOm}{r_{\Omega}}
\newcommand{\scal}[2]{(\,#1\,|\, #2\,)}
\newcommand{\set}[2]{\{\,#1 \mid #2\,\}}
\newcommand{\Set}[2]{\bigl\{\,#1\bigm| #2\,\bigr\}}
\newcommand{\singsupp}{\operatorname{sing\,supp}}
\newcommand{\supp}{\operatorname{supp}}
\newcommand{\Z}{\BBb Z}
\renewcommand{\hat}[1]{\overset{{\scriptscriptstyle \wedge}}{#1}}
\newcommand{\Bbar}{\overline{B}}
\newcounter{rmcount}\renewcommand{\thermcount}{{\rm\roman{rmcount}}}
\newenvironment{rmlist}{%
\begin{list}{{\rm(\thermcount)}}{\setlength{\labelwidth}{\leftmargin}%
\usecounter{rmcount}}}{\end{list}}
\begin{document}
\title[Fundamental Results for Operators of Type $\mathbf{1},\mathbf{1}$]%
{Fundamental Results for\\Pseudo-Differential Operators of Type $\mathbf{1},\mathbf{1}$}
\author{Jon Johnsen}
\address{Department of Mathematical Sciences\\ Aalborg University\\ 
Fredrik Bajers Vej 7G\\ DK-9220 Aalborg {\O}st, Denmark}
\email{jjohnsen@math.aau.dk}
\subjclass[2010]{35S05, 47G30}
\keywords{Pseudo-differential operator of type $1,1$, twisted diagonal condition,
  para\-differential decomposition, Spectral Support Rule, factorisation inequality}
\thanks{Supported by the Danish Council for Independent Research, Natural Sciences
  (Grant No.~4181-00042)
\\[3\baselineskip]
{\tt Appeared in Axioms, vol. 5 (2016), no. 2, 13. doi:10.3390/axioms5020013}} 

\enlargethispage{3\baselineskip}
\begin{abstract}
This paper develops some deeper consequences of an extended definition, proposed previously by the author, 
of pseudo-differential operators that are of type $1,1$
in H{\"o}rmander's sense. Thus, it contributes to the long-standing problem of creating a
systematic theory of such operators. 
It is shown that type $1,1$-operators are 
defined and continuous on the full space of temperate distributions, if they fulfil H{\"o}rmander's twisted
diagonal condition, or more generally if they belong to the self-adjoint subclass;
and that they are always defined on the temperate smooth functions.  As~a~main tool the
paradifferential decomposition is derived for type $1,1$-operators,
and to confirm a natural hypothesis the symmetric term is shown to cause the domain restrictions;
whereas the other terms are shown to define nice type $1,1$-operators fulfilling the twisted diagonal condition.
The decomposition is analysed in the type $1,1$-context by combining the Spectral Support Rule
and the factorisation inequality, which gives pointwise estimates of pseudo-differential operators in terms of maximal functions. 
\end{abstract}
\maketitle

\section{Introduction}  \label{intro-sect}
Pseudo-differential operators $a(x,D)$ of type $1,1$ have long been known to have peculiar
properties, almost since their invention by H{\"o}rmander~\cite{Hrd}. 
This is due to initial investigations in 1972 in the
thesis of Ching \cite{Chi72} and in lecture notes of Stein (made available in \cite[Ch.~VII\S1.3]{Ste93}); 
and again in 1978 by Parenti and Rodino \cite{PaRo78}.

The understanding of their unusual theory, and of the applications of these linear operators to
non-linear problems in partial differential equations,
grew crucially in the 1980's through works of Meyer \cite{Mey80,Mey81}, Bony \cite{Bon}, 
Bourdaud \cite{Bou82,Bou83,Bou87,Bou88}, H{\"o}rmander~\cite{H88,H89}.
Cf.\ also the expositions of H{\"o}rmander~\cite[Ch.~9]{H97} and Taylor~\cite{Tay91}.

However, the first formal definition of general type $1,1$-operators was put forward 
in 2008 by the author~\cite{JJ08vfm}. It would not be unjust to view this as an axiomatization of
the type $1,1$-theory, for whereas the previous contributions did not attempt to crystallise what a
type $1,1$-operator \emph{is} or how it can be \emph{characterised} in general, the definition from \cite{JJ08vfm} has been a 
fruitful framework for raising questions and seeking answers about type $1,1$-operators.

Indeed, being based on an operator theoretical approach, mimicking unbounded operators in Hilbert
space, the definition gave from the outset a rigorous discussion of, e.g.,\ unclosability, pseudo-locality,
non-preservation of wavefront sets and the Spectral Support Rule \cite{JJ08vfm}. 
This was followed up with a systematic $L_p$-theory of type $1,1$-operators in \cite{JJ11lp}, where a
main theorem relied on a symbol analysis proved in full detail in the present paper.

Meanwhile, M{\'e}tivier also treated type $1,1$-operators in 2008 in Chapter~4 of
\cite{Met08}, but took recourse to the space dependent extensions of 
Stein~\cite[Ch.~VII\S1.3]{Ste93}. 
Type $1,1$-operators have also been investigated, or played a role, in works of e.g.\ 
Torres~\cite{Tor90},
Marschall~\cite{Mar91},
Grafakos and Torres~\cite{GrTo99}, 
Taylor~\cite{Tay00},
H\'erau~\cite{Her02},
Lannes~\cite{Lan06},
Johnsen~\cite{JJ08par},
Hounie and dos Santos Kapp~\cite{HoSK09};
and for bilinear operators in Bernicot and Torres~\cite{BeTo11}.
Implicitly type $1,1$-operators also enter many works treating partial
differential equations with Bony's paradifferential calculus; but this
would lead too far to recall~here.

The present paper goes into a deeper, 
systematic study of type $1,1$-operators on $\cal S'(\Rn)$ and its subspaces.
Indeed, the definition in \cite{JJ08vfm} is shown here to give operators
always defined on the maximal smooth subspace $C^\infty\cap\cal S'$,
generalising results of Bourdaud~\cite{Bou87}  and David and Journ\'e~\cite{DaJo84}\,---\,and shown to be defined on the entire $\cal S'$ 
if they belong to the self-adjoint subclass, by an extension of 
H{\"o}rmander's analysis of this class~\cite{H88,H89}.
Moreover, the pointwise estimates in \cite{JJ11pe} are applied to the para\-differential
decompositions, which are analysed in the type $1,1$-context here. The decomposition gives 3 other
type~$1,1$-operators, of which the so-called symmetric term is responsible for the possible domain restrictions,
which occur when its infinite series diverges.

Altogether this should bring the theory of type $1,1$-operators to a rather more mature level.

\subsection{Background}
Recall that the symbol $a(x,\eta)$ of a type $1,1$-operator of order
$d\in \R$ fulfils
\begin{equation}
  |D^\alpha_\eta D^\beta_x a(x,\eta)|\le
  C_{\alpha,\beta}(1+|\eta|)^{d-|\alpha|+|\beta|}\quad\text{for}\quad
  x,\eta\in \Rn.  
  \label{Cab-ineq}
\end{equation}
Classical pseudo-differential operators are, e.g.,\ partial differential operators $\sum_{|\alpha|\le d}
a_\alpha(x)D^\beta$, having such symbols simply with $d-|\alpha|$ as exponents. The presence of
$|\beta|$ allows for a higher growth with respect to $\eta$, which has attracted attention for a
number of reasons.

The operator corresponding to \eqref{Cab-ineq} is for Schwartz functions $u(x)$, i.e.\ $u\in \cal S(\Rn)$,
\begin{equation}
   a(x,D)u=(2\pi)^{-n}\int e^{\im x\cdot \eta}a(x,\eta)\hat u(\eta)\,d\eta.  
\label{aSS-eq}
\end{equation}
But for $u\in \cal S'\setminus \cal S$ it requires another definition to settle whether $u$ belongs 
to the domain of $a(x,D)$ or not. This is indeed a main subject of the present paper,
which exploits the general definition of $a(x,D)$ presented in \cite{JJ08vfm};
it is recalled in \eqref{a11-id} below.

The non-triviality of the above task was discovered already by Ching \cite{Chi72},
who showed unboundedness on $L_2$ for certain $a_\theta(x,D)$ with $d=0$; cf.\
Example~\ref{Ching-exmp} below. As the adjoint $a_\theta(x,D)^*$ of Ching's operator does not leave $\cal S$ invariant,
as can be seen explicitly e.g.\ from the proof of \cite[Lem.~3.1]{JJ08vfm},
the usual extension to $\cal S'$ by duality is not possible for $\OP(S^d_{1,1})$.

In general the pathologies of type $1,1$-operators are without doubt reflecting 
that, most interestingly, this operator class has important applications to non-linear problems:

This was first described around 1980 by Meyer \cite{Mey80,Mey81}, 
who discovered that a composition operator $u\mapsto F\circ u=F(u)$ with $F\in
C^\infty $, $F(0)=0$,  
can be decomposed in its action on functions $u\in \bigcup _{s>n/p} H^s_p(\Rn)$,
by means of a specific $u$-dependent type $1,1$ symbol $a_u(x,\eta)\in S^0_{1,1}$, as
\begin{equation}
  F(u(x))=a_u(x,D)u(x).
  \label{auxDu-eq}
\end{equation}
He also showed that $a_u(x,D)$ \emph{extends} to a bounded operator on $H^t_r$ for $t>0$, 
so the fact that $u\mapsto F(u)$ sends $H^s_{p}$ into itself
can be seen from \eqref{auxDu-eq} by taking $t=s$ and $r=p$\,---\,indeed, 
this proof method is particularly elegant for non-integer $s>n/p$.
It was carried over rigorously to the present type $1,1$-framework in \cite[Sec.~9]{JJ08vfm}, with
continuity of $u\mapsto F\circ u$ as a corollary.
Some applications of \eqref{auxDu-eq} were explained by Taylor~\cite[Ch.~3]{Tay91}.

Secondly, it was shown in \cite{Mey81} that type $1,1$-operators play a main role in
the paradifferential calculus of Bony \cite{Bon} and the
microlocal inversion of nonlinear partial differential equations of the form
\begin{equation}
  G(x,(D^\alpha_x u(x))_{|\alpha|\le m})=0.
  \label{GxDu-eq}
\end{equation}
This was explicated by H{\"o}rmander, who devoted 
\cite[Ch.~10]{H97} to the subject. The resulting set-up was used e.g.\ by 
H\'erau \cite{Her02} in a study of hypoellipticity of \eqref{GxDu-eq}.
Moreover, it was used for propagation of singularities 
in \cite[Ch.~11]{H97}, with special emphasis on non-linear hyperbolic equations. 
Recently paradifferential operators, and thus type $1,1$-operators,
were also exploited for non-linear Schr{\"o}dinger operators in constructions of solutions, parametrices 
and propagation of singularities in global wave front sets; cf.\ works of e.g.\ 
Tataru~\cite{Tat08}, Delort~\cite{Del11}, Nicola and Rodino~\cite{NiRo15}.

Thirdly, both type $1,1$-theory as such and Bony's paradifferential
techniques played a crucial role in the author's work on 
semi-linear elliptic boundary problems \cite{JJ08par}. 

Because of the relative novelty of this application, a sketch is given using a typical example.
In a bounded $C^\infty $-region $\Omega\subset \Rn$ with normal derivatives 
$\gamma_ju= (\vec n\cdot \nabla)^j u$ at the boundary $\partial\Omega$,
and $\lap:=\partial^2_{x_1}+\dots+\partial^2_{x_n}$,
let $u(x)$ solve the perturbed $\ell$-harmonic Dirichl\'et problem 
\begin{equation} 
  (\mlap)^\ell u+u^2=f \quad\text{ in } \Omega,
\qquad
  \gamma_ju=\varphi_j\quad\text{ on } \partial\Omega\text{ for } 0\le j<\ell.
\label{u2-pb}
\end{equation}

Without $u^2$, the linear problem has a well-known solution $u_0=R_\ell f+K_0\varphi_0+\dots
+K_{\ell-1}\varphi_{l-1}$, with operators belonging to the pseudo-differential boundary operator
class of Boutet de Monvel~\cite{BM71}.
For~the non-linear problem in
\eqref{u2-pb}, the parametrix construction of \cite{JJ08par} yields the
solution formula 
\begin{equation}
  u= P^{(N)}_u(R_\ell f +K_0\varphi_0+\dots +K_{\ell-1}\varphi_{\ell-1})
    +(R_\ell L_u)^Nu,
  \label{param-eq}
\end{equation}
where the parametrix $ P^{(N)}_u$ is the linear map given by the finite Neumann series
\begin{equation}
 P^{(N)}_u=I +R_\ell L_u+\dots +(R_\ell L_u)^{N-1}  
\label{param'-eq}
\end{equation}
in terms of the \emph{exact} paralinearisation $L_u$ of $u^2$ with the sign convention $-L_u(u)=u^2$;
cf.\ \cite{JJ08par}. 

One merit of \eqref{param-eq} is to show why $u$'s regularity is \emph{unchanged} by the non-linear term $u^2$: 
each parametrix $P^{(N)}_u$ is of order $0$, hence does not change Sobolev regularity when applied to $u_0$;
while in \eqref{param-eq} the remainder $(R_\ell L_u)^{N}u$ will be
in $C^k(\overline{\Omega})$ for every fixed $k$ if $N$ is taken large 
enough. Indeed, $R_\ell L_u$ has a fixed negative order if $u$
is given with just the weak a priori regularity necessary to make sense of the
boundary condition and make $u^2$ defined and a priori more regular than
$(\mlap)^\ell u$.

Type $1,1$-operators are important for the fact that \eqref{param-eq} easily implies 
that extra regularity properties of $f$ in subregions $\Xi\Subset \Omega$ carry over to $u$; 
e.g.\ if $f|_{\Xi}$ is $C^\infty $ so is $u|_{\Xi}$. 
Indeed, such implications boil down to the fact that the exact paralinearisation $L_u$ \emph{factors} through an
operator $A_u$ of type $1,1$, that is, if 
$\rOm$ denotes restriction to $\Omega$  and $\lOm$ is a linear extension to $\Rn\setminus\Omega$, 
\begin{equation}
  L_u=\rOm A_u \lOm,\qquad A_u\in \OP(S^\infty _{1,1}).
  \label{rAul-eq}
\end{equation}
Now, by inserting \eqref{rAul-eq} into \eqref{param'-eq}--\eqref{param-eq} 
for a large $N$ and using cut-off functions
supported in $\Xi$ in a well-known way, cf.\ \cite[Thm.~7.8]{JJ08par},
the regularity of $u$ locally in $\Xi$ is at once improved to the extent permitted by the data $f$
by using the \emph{pseudo-local} property of $A_u$:
\begin{equation}
  \singsupp Au\subset \singsupp u\quad\text{for}\quad u\in D(A).
\end{equation}

However, the pseudo-local property of general type $1,1$-operators was only
proved recently in \cite{JJ08vfm}, inspired by the application below \eqref{rAul-eq}. 
Yet, pseudo-locality was anticipated more than three decades ago by Parenti and Rodino \cite{PaRo78}, 
who gave an inspiring but incomplete indication, as they relied on the future to bring a specific
meaning to $a(x,D)u$  for $u\in \cal S'\setminus C^\infty_0$.

A rigorous definition of general type $1,1$ operators was first given in \cite{JJ08vfm}.
In a way the definition abandons Fourier analysis (temporarily) and mimicks the theory of unbounded
operators in Hilbert spaces. This is by viewing a type $1,1$-operator as a densely defined, unbounded
operator $a(x,D)\colon \cal S'\to\cal D'$ between the two topological vector spaces $\cal S'(\Rn)$
and $\cal D'(\Rn)$; thus the graph of $a(x,D)$ may be closed or unclosed in $\cal S'\times\cal D'$ etc.
Indeed, it was proposed in \cite{JJ08vfm} to stipulate that
$u\in\cal S'$ belongs to the domain $D(a(x,D))$ of $a(x,D)$ and to set
\begin{equation}
  a(x,D)u:= \lim_{m\to\infty } (2\pi)^{-n}\int_{\Rn}
  e^{\im x\cdot\eta}\psi(2^{-m}D_x)a(x,\eta)\psi(2^{-m}\eta)\hat u(\eta)\,d\eta
  \label{a11-id}
\end{equation}
whenever this limit does exist in $\cal D'(\Rn)$
for every  $\psi\in C^\infty_0(\Rn)$ with $\psi=1$ in a neighbourhood of the origin, 
and does not depend on such $\psi$. 

In passing it is noted that, beyond the definition, operator theory is also felt in the rules
of calculus, since as shown in Proposition~\ref{comm-prop} below the well-known commutator identity
is replaced for type $1,1$-operators by an operator theoretical \emph{inclusion},
\begin{equation}
  a(x,D)D_j+[D_j,a(x,D)]\subset D_j a(x,D).
\end{equation}

The unconventional definition in \eqref{a11-id}, by \emph{vanishing frequency modulation}, 
is a rewriting of the usual one, which is suitable for the present general symbols:
clearly \eqref{a11-id} gives back the integral in
\eqref{aSS-eq} if $u\in \cal S$. In case $a\in S^d_{1,0}$ this
identification extends further to $u\in \cal S'$ by duality and the calculus of classical
pseudo-differential operators. Note that the above integral should be interpreted as the operator 
$\OP(\psi(2^{-m}D_x)a(x,\eta)\psi(2^{-m}\eta))$ in $\OP(S^{-\infty})$ acting on $u$.

Clearly \eqref{a11-id} is reminiscent of oscillatory integrals, now with the addition
that $u\in D(a(x,D))$ when the regularisation yields a limit independent of
the integration factor. Of course it is not a conventional integration factor that is used here, 
but rather the Fourier multiplier $\psi(2^{-m}D_x)$ that modifies the frequencies of $a(\cdot ,\eta)$.
While the necessity of this modification was amply elucidated in \cite{JJ08vfm}, it is moreover 
beneficial because the use of $\psi(2^{-m}D_x)$ gives easy access to Littlewood--Paley analysis of $a(x,D)$.

The definition \eqref{a11-id} was investigated in \cite{JJ08vfm} from
several other perspectives, of which some will be needed below.
But mentioned briefly \eqref{a11-id} was proved to be maximal
among the definitions of $A=a(x,D)$ that gives back the usual operators in
$\OP(S^{-\infty })$ and is stable under the limit in \eqref{a11-id}; 
$A$ is always defined on $\cal F^{-1}\cal E'$; it is pseudo-local but
does change wavefront sets in certain cases (even if $A$ is defined on $\bigcup H^s$); 
and $A$ transports supports via the distribution kernel, 
i.e.\ $\supp Au\subset \supp K\circ \supp u$ when $u\in D(A)\bigcap\cal E'$,
with a similar \emph{spectral} support rule for $\supp\hat u$; cf.\
\eqref{sFAu-eq} below and Appendix~\ref{spectral-app}.

For the Weyl calculus, H{\"o}rmander~\cite{H88}  
noted that type $1,1$-operators do not fit well, as
Ching's operator can have discontinuous Weyl-symbol. Conversely 
Boulkhemair \cite{Blk95,Blk99} showed that the Weyl operator
$\iint e^{\im(x-y)\cdot \eta}a(\tfrac{x+y}{2},\eta)u(y)\,dy\,d\eta/(2\pi)^n$ 
may give peculiar properties by insertion of $a(x,\eta)$ from $S^d_{1,1}$.
E.g., already for Ching's symbol with $d=0$, 
the real or imaginary part gives a Weyl operator that is unbounded on $H^s$ for \emph{every} $s\in\R$.

For more remarks on the subject's historic development the reader may refer to Section~\ref{prel-sect};
or consult the review in the introduction of~\cite{JJ08vfm}.

\subsection{Outline of Results}
The purpose of this paper is to continue the foundational study in \cite{JJ08vfm}
and support the definition in \eqref{a11-id} with further consequences.

First of all this concerns the hitherto untreated question: 
under which conditions is a given type $1,1$-operator $a(x,D)$
an everywhere defined and continuous map
\begin{equation}
  a(x,D)\colon \cal S'(\Rn)\to\cal S'(\Rn)\quad ?
  \label{S'S'-eq}
\end{equation}
For this it is shown in Proposition~\ref{GB-prop} and Theorem~\ref{tdc-thm} below to be
sufficient that $a(x,\eta)$ fulfils 
H{\"o}rmander's \emph{twisted diagonal condition}, i.e.\ the partially Fourier
transformed symbol 
\begin{equation}
  \hat a(\xi,\eta)=\cal F_{x\to\xi}a(x,\eta)
\end{equation}
should vanish in a conical neighbourhood of a non-compact part of the twisted
diagonal $\mathcal{T}$ given by $\xi+\eta=0$ in $\Rn\times \Rn$.
More precisely this means that for some $B\ge 1$
\begin{equation}
  \hat a(\xi,\eta)\ne 0 \quad\text{only if}\quad  |\xi+\eta|+1\ge |\eta|/B.
  \label{tdc-eq}
\end{equation}
It should perhaps be noted that it is natural to consider $\hat a(\xi,\eta)$,
as it is related (cf.\ \cite[Prop.~4.2]{JJ08vfm}) 
both to the kernel $K$ of $a(x,D)$ and to the kernel
$\cal K$ of $\cal F^{-1}a(x,D)\cal F$,
\begin{equation}
  (2\pi)^{n}\cal K(\xi,\eta)=\hat a(\xi-\eta,\eta)= 
  \cal F_{(x,y)\to(\xi,\eta)}K(x,-y).
  \label{KaFK-eq}
\end{equation}

More generally the $\cal S'$-continuity \eqref{S'S'-eq} is obtained in Theorems~\ref{tildeS-thm} and \ref{sigma-thm} {below} 
for the $a(x,\eta)$ in $S^d_{1,1}$ that merely satisfy H{\"o}rmander's twisted diagonal
condition of \emph{order} $\sigma$ for all $\sigma\in \R$. These are the symbols which
for some $c_{\alpha,\sigma}$ and $0<\varepsilon<1$ fulfil
\begin{equation}
  \sup_{x\in \Rn,\; R>0} R^{|\alpha|-d}\big(
  \int_{R\le |\eta|\le 2R} |D^\alpha_\eta a_{\chi,\varepsilon}(x,\eta)|^2
  \,\frac{d\eta}{R^n}\big)^{1/2}
  \le c_{\alpha,\sigma} \varepsilon^{\sigma+n/2-|\alpha|}.
  \label{tdcs-eq}
\end{equation}
In this asymptotic formula $\hat a_{\chi,\varepsilon}$ denotes a specific
localisation of $\hat a(x,\eta)$ to the conical neighbourhood
$|\xi+\eta|+1\le 2\varepsilon|\eta|$ of the twisted diagonal $\mathcal{T}$.

Details on the cut-off function $\chi$ in \eqref{tdcs-eq} are recalled in
Section~\ref{TDC-ssect}, in connection with an account of 
H{\"o}rmander's fundamental result that validity of \eqref{tdcs-eq} for all
$\sigma\in\R$ is equivalent to extendability 
of $a(x,D)$ to a bounded map $H^{s+d}\to H^s$ for all $s\in\R$, as
well as equivalent to the adjoint $a(x,D)^*$ being of type $1,1$.

Of course these results of H{\"o}rmander make it natural to expect that
the above two conditions (namely \eqref{tdc-eq} and \eqref{tdcs-eq}
for all $\sigma$) are sufficient for the $\cal S'$-continuity in \eqref{S'S'-eq}, but this has not been addressed 
explicitly in the literature before.
As mentioned they are verified in Theorem~\ref{tdc-thm}, respectively in Theorem~\ref{tildeS-thm} by
duality and in Theorem~\ref{sigma-thm} by exploiting \eqref{tdcs-eq} directly.

In the realm of smooth functions the situation is fundamentally different. Here  there is
a commutative diagram for \emph{every} type $1,1$-operator $a(x,D)$:
\begin{equation}
\begin{CD}
  \cal S  @>>>  \cal S+\cal F^{-1}\cal E' @>>> \cal O_M  @>>> 
  C^\infty\bigcap \cal S'
\\
  @V a(x,D) VV  @V a(x,D) VV  @VV a(x,D) V  @VV a(x,D) V
\\
  \cal S  @>>>  \cal O_M @>>> \cal O_M  @>>>
  C^\infty
  \end{CD}
\label{aCD-eq}
  \end{equation}
The first column is just the integral \eqref{aSS-eq}; the second an
extension from \cite{JJ05DTL,JJ08vfm}. Column three is an improvement given below of 
the early contribution of Bourdaud \cite{Bou87} that
$a(x,D)$ extends to a map $\cal O_M\to \cal D'$,
whereby $\cal O_M$ denotes Schwartz' space of slowly increasing smooth functions.

However, the fourth column restates the full result
that a type $1,1$-operator is always defined on the 
\emph{maximal} space of smooth functions $C^\infty\bigcap\cal S'$.
More precisely, according to Theorem~\ref{aOO-thm} below, it restricts to a 
strongly continuous map
\begin{equation}
  a(x,D)\colon C^\infty (\Rn)\bigcap\cal S'(\Rn)\to C^\infty (\Rn).
  \label{aCSC-eq}
\end{equation}
It is noteworthy that this holds without any of the conditions
\eqref{tdc-eq} and \eqref{tdcs-eq}. 
Another point is that, since $C^\infty\not\subset\cal S'$, it was necessary to ask for a limit in
the topology of $\cal D'$ in \eqref{a11-id}.

Perhaps it could seem surprising that the described results on \eqref{S'S'-eq} and 
\eqref{aCSC-eq} have not been established in their full generality before. However, it should
be emphasised that these properties are valid for the 
operator defined in \eqref{a11-id}, so they go much beyond the
mere extendability discussed by Meyer~\cite{Mey81}, Bourdaud~\cite{Bou88},
H{\"o}rmander~\cite{H88,H89,H97}, Torres~\cite{Tor90}, Stein~\cite{Ste93}.

The definition in \eqref{a11-id} is also useful because it easily adapts to
Littlewood--Paley analysis of type $1,1$-operators. Here the systematic point of departure
is the well-known paradifferential splitting based on dyadic coronas
(cf.\ Section~\ref{corona-sect} for details),
as used by e.g.\ Bony \cite{Bon}, Yamazaki~\cite{Y1}, Marschall~\cite{Mar91}:
\begin{equation}
  a(x,D)=a^{(1)}_\psi(x,D)+a^{(2)}_\psi(x,D)+a^{(3)}_\psi(x,D).
  \label{parasplit-eq}
\end{equation}
Since the 1980's splittings like \eqref{parasplit-eq} have been used
in microlocal analysis of \eqref{GxDu-eq} as well as in
numerous proofs of continuity of $a(x,D)$ in Sobolev spaces $H^s_p$ and
H{\"o}lder--Zygmund spaces $C^s_*$, or the more general Besov and
Lizorkin--Triebel scales $B^{s}_{p,q}$ and $F^{s}_{p,q}$. 
For type $1,1$ operators \eqref{parasplit-eq} was used by
Bourdaud~\cite{Bou82,Bou83,Bou88}, Marschall~\cite{Mar91}, Runst~\cite{Run85ex},
and the author in \cite{JJ08vfm,JJ11lp},
and in \cite{JJ04DCR,JJ05DTL} where the Lizorkin--Triebel spaces 
$F^s_{p,1}$ were shown to be optimal substitutes for the Sobolev spaces $H^s_p$ 
at the borderline $s=d$ for the domains of operators in $\OP(S^d_{1,1})$.

It is known that the decomposition \eqref{parasplit-eq} follows from 
the bilinear way $\psi$ enters \eqref{a11-id}, and that one finds at once the three
infinite series in \eqref{a1-eq}--\eqref{a3-eq} below, which define the $a^{(j)}_\psi(x,D)$.
But it is a main point of Sections~\ref{corona-sect} and \ref{split-sect} to verify 
that \emph{each} of these series gives an operator $a^{(j)}_\psi(x,D)$ also belonging to
$\OP(S^d_{1,1})$; which is non-trivial because of the modulation function $\psi$ in \eqref{a11-id}.

As general properties of the type $1,1$-operators $a^{(1)}_\psi(x,D)$ and $a^{(3)}_\psi(x,D)$,
they are shown here to satisfy the twisted diagonal condition \eqref{tdc-eq}, so
\eqref{parasplit-eq} can be seen as a main source of such operators. 
Consequently these terms are harmless as they are defined on $\cal S'$ because of \eqref{S'S'-eq} ff.

Therefore, it is the so-called symmetric term  $a^{(2)}_{\psi}(x,D)$ which may
cause $a(x,D)u$ to be undefined, as was previously known e.g.\ for functions $u$
in a Sobolev space; cf.\ \cite{JJ05DTL}. This delicate situation is
clarified in Theorem~\ref{a2a-thm} with a natural identification of type $1,1$-domains, namely
\begin{equation}
  D(a(x,D))=D(a^{(2)}_\psi(x,D)).
\label{dom-rel}
\end{equation}
This might seem obvious at first glance, but really is without meaning before the
$a^{(2)}_\psi$-series has been shown to define a type $1,1$-operator. Hence \eqref{dom-rel} is 
a corollary to the cumbersome book-keeping needed for this identification of $a_\psi^{(2)}(x,D)$. In fact, the real meaning of
\eqref{dom-rel} is that both domains consist of the $u\in \cal S'$ for which
the $a_\psi^{(2)}$-series converges; cf.\ Theorem~\ref{a2a-thm}.

In comparison, convergence of the series for $a^{(1)}_\psi(x,D)u$ and 
$a^{(3)}_\psi(x,D)u$ is  in Theorem~\ref{a123-thm} verified explicitly for all $u\in \cal S'$,
$a\in S^\infty_{1,1}$, and these operators are proved to be of type $1,1$.
Thus \eqref{parasplit-eq} is an identity among type $1,1$-operators. 
It was exploited for estimates of arbitrary $a\in S^d_{1,1}$ in e.g.\ 
Sobolev spaces $H^s_p$ and H{\"o}lder--Zygmund spaces $C^s_*$ in \cite{JJ11lp}, 
by giving full proofs (i.e.\ the first based on \eqref{a11-id}) 
of the boundedness for all $s>0$, $1<p<\infty$,
\begin{equation}
  a(x,D)\colon H^{s+d}_p(\Rn)\to H^s_p(\Rn),\qquad
  a(x,D)\colon C^{s+d}_*(\Rn)\to C^s_*(\Rn).
  \label{HCs-eq}
\end{equation}
This was generalised in \cite{JJ11lp} to all $s\in\R$ when $a$ fulfills
the twisted diagonal condition of order $\sigma$ in  \eqref{tdcs-eq} 
for all $\sigma\in \R$. This sufficient condition extends results for $p=2$ of
H{\"o}rmander \cite{H88,H89} to $1<p<\infty$, $s\in\R$. The special case $s=0=d$ was considered
recently in \cite{HoSK09}.

The present results on $a_\psi^{(j)}(x,D)$ are of course natural, but they do rely on two techniques
introduced rather recently in works of the author. One ingredient is a \emph{pointwise} estimate
\begin{equation}
  |a(x,D)u(x)|\le c\,u^*(x),\qquad x\in \Rn,
\end{equation}
cf.\ Section~\ref{pe-sect} and \cite{JJ11pe}, 
in terms of the Peetre--Fefferman--Stein maximal function 
\begin{equation}
  u^*(x)=\sup_{y\in\Rn}\frac{|u(x-y)|}{(1+R|y|)^N}, 
  \quad\text{when}\quad \supp\hat u\subset \overline{B}(0,R).
\end{equation}
Although $u\mapsto u^*$ is non-linear, it is useful
for convergence of series: e.g.\ in $H^s_p$ since it is $L_p$-bounded, and as shown here
also in $\cal S'$ since it has polynomial bounds $u^*(x)\le c(1+R|x|)^N$.

The second ingredient is the \emph{Spectral Support Rule} from \cite{JJ08vfm}; cf.\ also
\cite{JJ04DCR,JJ05DTL}. It provides control of
$\supp\cal F(a(x,D)u)$ in terms of the supports of $\hat u$ and 
$\cal K(\xi,\eta)$ in \eqref{KaFK-eq}, 
\begin{equation}
  \supp \cal F(a(x,D)u)\subset \overline{\supp\cal K\circ \supp\cal F u}
  = \bigl\{\,\xi+\eta \bigm| (\xi,\eta)\in \supp \hat a,\
                                 \eta\in \supp\hat u\,\bigr\}^{\overline{\hspace{1ex}}}.
  \label{sFAu-eq}
\end{equation}
The simple case in which $u\in\cal S$ was covered by Metivier
\cite[Prop.~4.2.8]{Met08}. A review of \eqref{sFAu-eq} is given in Appendix~\ref{spectral-app},
including an equally easy proof for arbitrary $\hat u\in \cal E'$ and $a\in S^d_{1,1}$.

A main purpose of \eqref{sFAu-eq} is to avoid a cumbersome approximation by elementary symbols.
These were introduced by Coifman and Meyer~\cite{CoMe78} to reduce the task of bounding the support
of $\cal F(a(x,D)u)$: indeed, elementary symbols have the form 
$a(x,\eta)=\sum m_j(x)\Phi_j(\eta)$ for multipliers $m_j\in L_\infty $ and a Littlewood--Paley
partition $1=\sum \Phi_j$, so clearly
$(2\pi)^{n}\cal Fa(x,D)u=\sum \hat m_j*(\Phi_j\hat u) $ is a finite sum when $\hat u\in \cal E'$;
whence the rule for convolutions yields \eqref{sFAu-eq} for such symbols.

However, approximation by elementary symbols is not just technically redundant because of
\eqref{sFAu-eq}, it would also be particularly cumbersome to use 
for a type $1,1$-symbol, as \eqref{a11-id} would then have to be replaced by a double-limit procedure. 
Moreover, in the proof of \eqref{parasplit-eq}, as well as in the $L_p$-theory based on it in \cite{JJ11lp}, 
\eqref{sFAu-eq} also yields a significant simplification. 

\begin{rem}
The Spectral Support Rule \eqref{sFAu-eq} shows clearly that H{\"o}rmander's
twisted diagonal condition \eqref{tdc-eq} ensures that $a(x,D)$ cannot change
(large) frequencies in $\supp\hat u$ to $0$. In fact, the support condition in \eqref{tdc-eq} 
implies that $\xi$ cannot be close to $-\eta$  when $(\xi,\eta)\in \supp\hat a$,
which by \eqref{sFAu-eq} means that $\eta\in \supp\hat u$ will be changed
by $a(x,D)$ to the frequency $\xi+\eta\ne 0$.  
\end{rem}

\subsection*{Contents}
Notation is settled in Section~\ref{prel-sect} along with basics on
operators of type $1,1$ and the $C^\infty$-results in \eqref{aCD-eq} ff. In Section~\ref{pe-sect}
some pointwise estimates are recalled from \cite{JJ11pe} and then extended to
a version for frequency modulated operators. 
Section~\ref{adj-sect} gives a precise
analysis of the self-adjoint part of $S^d_{1,1}$, relying on the 
results and methods from H{\"o}rmander's lecture notes \cite[Ch.~9]{H97};
with consequences derived from the present operator definition.
Littlewood--Paley analysis of type $1,1$-operators is developed in Section~\ref{corona-sect}. 
In Section~\ref{split-sect}
the operators resulting from the paradifferential splitting
\eqref{parasplit-eq} is further analysed, especially concerning their continuity on
$\cal S'(\Rn)$ and the domain relation \eqref{dom-rel}. 
Section~\ref{final-sect} contains a few final remarks.

\section{Preliminaries on Type $1,1$-Operators}   \label{prel-sect}
Notation and notions from Schwartz' distribution theory, such as the spaces
$C^\infty_0$, $\cal S$, $C^\infty$ of smooth functions and
their duals $\cal D'$, $\cal S'$, $\cal E'$ of distributions,
and the Fourier transformation ${\cal F}$, will be as in H{\"o}rmander's
book \cite{H} with these exceptions:
$\dual{u}{\varphi}$ denotes the value of a distribution $u$ on a test
function $\varphi$. The Sobolev space of order $s\in \mathbb{R}$ based on $L_p$ is written $H^s_p$, and $H^s=H^s_2$.
The space ${\cal O}_M({\mathbb{R}}^n)$ consists of the slowly increasing $f\in
C^\infty({\mathbb{R}}^n)$, i.e.\ the $f$ that for each multiindex $\alpha$ and
some $N>0$ fulfils $|D^\alpha f(x)|\le c(1+|x|)^{N}$.

As usual $t_{+}=\max(0,t)$ is the positive part of $t\in \mathbb{R}$ whilst
$[t]$ denotes the greatest integer $\le t$. In general, $c$ will denote
positive constants, specific to the place of occurrence.

\subsection{The General Definition of Type $1,1$-Operators}
For type $1,1$-operators the reader may consult \cite{JJ08vfm} for an overview of previous results. 
The present paper is partly a continuation of
\cite{JJ04DCR,JJ05DTL,JJ08vfm}, but it suffices to recall just a few facts. 

By standard quantization,
each operator $a(x,D)$ is defined on the Schwartz space $\cal S(\Rn)$ by
\begin{equation}
  a(x,D)u=\OP(a)u(x)
  =(2\pi)^{-n}\int e^{\im x\cdot \eta} a(x,\eta)\cal Fu(\eta)\,d\eta,
 \qquad u\in \cal S(\Rn).
  \label{axDu-id}
\end{equation}
Hereby its symbol $a(x,\eta)$ is required to be in $C^\infty(\Rn\times \Rn)$, of order $d\in \R$
and type $1,1$, which means that for all multiindices $\alpha$, $\beta\in \N_0^n$ it fulfils \eqref{Cab-ineq},
or more precisely has finite seminorms:
\begin{equation}
 p_{\alpha,\beta}(a):= \sup_{x,\eta\in \Rn}
  (1+|\eta|)^{-(d-|\alpha|+|\beta|)}
  |D^\alpha_\eta D^\beta_x a(x,\eta)|
  <\infty.  
  \label{pab-eq}
\end{equation}
The Fr\'echet space of such symbols is denoted by $S^d_{1,1}(\Rn\times \Rn)$, or just $S^d_{1,1}$ for brevity,
while as usual $S^{-\infty}=\bigcap_d S^d_{1,1}$. Basic estimates yield that the bilinear map $(a,u)\mapsto a(x,D)u$ is continuous 
\begin{equation}
  S^d_{1,1}\times \cal S\to \cal S.
  \label{SSS-eq}
\end{equation}
The distribution kernel $K(x,y)=\cal F^{-1}_{\eta\to z}a(x,\eta)\big|_{z=x-y}$ is well known to be
$C^\infty $ for $x\ne y$ also in the type $1,1$ context; cf.\ \cite[Lem.~4.3]{JJ08vfm}. It fulfils 
$\dual{a(x,D)u}{\varphi}=\dual{K}{\varphi\otimes u}$ for all $u$,
$\varphi\in \cal S$.

For arbitrary $u\in \cal S'\setminus \cal S$ it is a delicate
question whether or not $a(x,D)u$ is defined. 
The general definition of type $1,1$-operators in \cite{JJ08vfm} uses a symbol modification,
exploited throughout below, 
namely $b(x,\eta)=\psi(2^{-m}D_x)a(x,\eta)$, or more precisely 
\begin{equation}
  \hat b(\xi,\eta):=\cal F_{x\to\xi}b(x,\eta)=
  \psi(2^{-m}\xi)\hat a(\xi,\eta).
\end{equation}

\begin{defn}   \label{a11-defn}
If a symbol $a(x,\eta)$ is in $S^d_{1,1}(\Rn\times \Rn)$ and $u\in\cal S'(\Rn)$ whilst 
$\psi\in C^\infty_0(\Rn)$ is an arbitrary cut-off function equal to
$1$ in a neighbourhood of the origin, let
\begin{equation}
  a_{\psi}(x,D)u:=
  \lim_{m\to\infty }\op{OP}(\psi(2^{-m}D_x)a(x,\eta)\psi(2^{-m}\eta))u.
  \label{aPsi-eq}
\end{equation}
When for each such $\psi$ the limit 
$a_{\psi}(x,D)u$ exists in $\cal D'(\Rn)$ and moreover is independent of $\psi$, then
$u$ belongs to the domain $D(a(x,D))$ by definition and 
\begin{equation}
  a(x,D)u=a_{\psi}(x,D)u.
  \label{aPsi'-eq}
\end{equation}
This way $a(x,D)$ is a linear map $\cal S'(\Rn)\to\cal D'(\Rn)$ with dense domain,
as by \eqref{SSS-eq} it contains $\cal S(\Rn)$.
(Use of $D(\cdot)$ for the domain should not be confounded with
derivatives, such as $D^\alpha$ or $a(x,D)$.)
\end{defn}

This was called definition by \emph{vanishing frequency modulation} in \cite{JJ08vfm},
because the removal of high frequencies in $x$ and $\eta$ achieved by
$\psi(2^{-m}D_x)$ and $\psi(2^{-m}\eta)$ disappears for $m\to\infty $.
Note that the action on $u$ is well defined for each $m$ in
\eqref{aPsi-eq} as the modified symbol is in $S^{-\infty }$.
Occasionally the function $\psi$ will be referred
to as a \emph{modulation function.}

The frequency modulated operator $\OP(\psi(2^{-m}D_x)a(x,\eta)\psi(2^{-m}\eta))$ has,
by the comparison made in \cite[Prop.~5.11]{JJ08vfm},
its kernel $K_m(x,y)$ conveniently given as a convolution, up to conjugation by the involution
$M\colon (x,y)\mapsto (x,x-y)$, 
\begin{equation} \label{Km-id}
  K_m(x,y)=4^{mn}(\cal F^{-1}\psi(2^m\cdot)\otimes\cal F^{-1}\psi(2^m\cdot))*(K\circ M)(x,x-y).
\end{equation}

\begin{rem} \label{KmS-rem}
It is used below that when $\varphi,\chi\in C_0^\infty(\Rn)$ are such that $\chi\equiv 1$ on
a neighbourhood of $\supp\varphi$, then since  $\supp\varphi\otimes (1-\chi)$ is disjoint from 
the diagonal and bounded in the $x$-direction, 
there is convergence in the topology of $\cal S(\Rn\times \Rn)$:
\begin{equation}
  \varphi(x)(1-\chi(y))K_m(x,y) 
  \xrightarrow[m\to\infty ]{~}\varphi(x)(1-\chi(y)) K(x,y).
\label{KmS-eq}
\end{equation}
However, this requires verification because the commutator of the convolution \eqref{Km-id} and pointwise
multiplication by $\varphi\otimes(1-\chi)$ is a nontrivial pseudo-differential, hence
non-local operator. A proof of \eqref{KmS-eq} based on the Regular Convergence Lemma  
can be found in \cite[Prop.~6.3]{JJ08vfm}.
\end{rem}

In general the calculus of type $1,1$-operators is delicate,
cf.\ \cite{H88,H89,H97}, but the following result from \cite{JJ11lp} is just an exercise (cf.\ the proof there).
It is restated here for convenience.

\begin{prop}
  \label{abc-prop}
When $a(x,\eta)$ is in $ S^{d_1}_{1,1}(\Rn\times \Rn)$ and a symbol with constant coefficients 
$b(\eta)$ belongs to $S^{d_2}_{1,0}(\Rn\times \Rn)$, 
then $c(x,\eta):=a(x,\eta)b(\eta)$
is in $S^{d_1+d_2}_{1,1}(\Rn\times \Rn)$ and
\begin{equation}
  c(x,D)u=a(x,D)b(D)u.
\end{equation}
In particular $D(c(x,D))=D(a(x,D)b(D))$, so the two sides are simultaneously defined.
\end{prop}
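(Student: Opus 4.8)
The plan is to handle the two assertions separately: the symbol membership by a routine Leibniz estimate, and the operator identity by inspecting the frequency modulated operators of Definition~\ref{a11-defn} term by term. For $c=a b$ the Leibniz rule gives
\begin{equation}
  D^\alpha_\eta D^\beta_x c(x,\eta)
  =\sum_{\gamma\le\alpha}\binom{\alpha}{\gamma}
   \bigl(D^{\alpha-\gamma}_\eta D^\beta_x a(x,\eta)\bigr)\,D^\gamma_\eta b(\eta),
\end{equation}
and since the two factors are controlled by \eqref{Cab-ineq} with exponents $d_1-|\alpha-\gamma|+|\beta|$ and $d_2-|\gamma|$, every term is $O\bigl((1+|\eta|)^{d_1+d_2-|\alpha|+|\beta|}\bigr)$; this yields the seminorm bounds \eqref{pab-eq} for $c$, so $c\in S^{d_1+d_2}_{1,1}$ (with each $p_{\alpha,\beta}(c)$ dominated by finitely many $p_{\cdot,\cdot}(a)$ and $S^{d_2}_{1,0}$-seminorms of $b$). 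In particular $b(\eta)$ has polynomially bounded derivatives of all orders, so $b\in\cal O_M(\Rn)$ and $b(D)=\cal F^{-1}b(\cdot)\cal F$ is an everywhere defined continuous map $\cal S'(\Rn)\to\cal S'(\Rn)$; hence $D(a(x,D)b(D))=\{u\in\cal S'\mid b(D)u\in D(a(x,D))\}$ by the definition of the composite.

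For the operator identity, fix $u\in\cal S'$ and a modulation function $\psi$, and abbreviate $\tilde a_m(x,\eta):=\psi(2^{-m}D_x)a(x,\eta)\,\psi(2^{-m}\eta)\in S^{-\infty}$. Since $b(\eta)$ is independent of $x$, we have $\cal F_{x\to\xi}c(x,\eta)=b(\eta)\hat a(\xi,\eta)$, so the cut-off $\psi(2^{-m}D_x)$ (acting only in $x$) commutes with multiplication by $b$, giving
\begin{equation}
  \psi(2^{-m}D_x)c(x,\eta)\,\psi(2^{-m}\eta)=\tilde a_m(x,\eta)\,b(\eta).
\end{equation}
On the other hand, because $b$ has constant coefficients, composing the $S^{-\infty}$-operator $\OP(\tilde a_m)$ on the right with $b(D)$ just multiplies symbols: for $v=b(D)u$ one has $\hat v=b\,\hat u\in\cal S'$, whence $\OP(\tilde a_m)\,b(D)u=\OP(\tilde a_m\,b)\,u$ directly from \eqref{axDu-id}. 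Combining the two displays shows that for \emph{every} $m$
\begin{equation}
  \OP\bigl(\psi(2^{-m}D_x)c(x,\eta)\psi(2^{-m}\eta)\bigr)u
  =\OP\bigl(\psi(2^{-m}D_x)a(x,\eta)\psi(2^{-m}\eta)\bigr)\bigl(b(D)u\bigr).
\end{equation}

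Since the two sides agree term by term in $m$, and for every admissible $\psi$, the limit \eqref{aPsi-eq} defining $c_\psi(x,D)u$ exists in $\cal D'$ and is independent of $\psi$ exactly when the same is true of the limit defining $a_\psi(x,D)(b(D)u)$; that is, $u\in D(c(x,D))$ iff $b(D)u\in D(a(x,D))$, i.e.\ iff $u\in D(a(x,D)b(D))$. On this common domain the two limits coincide, so $c(x,D)u=a(x,D)b(D)u$. No genuine obstacle is expected: the only points needing (trivial) care are the commutation of $\psi(2^{-m}D_x)$ with multiplication by the $x$-independent symbol $b(\eta)$, and the legitimacy on $\cal S'$ of the composition rule $\OP(\tilde a_m)\,b(D)=\OP(\tilde a_m b)$ for $\tilde a_m\in S^{-\infty}$ and the $\cal O_M$-multiplier $b$.
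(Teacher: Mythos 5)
Your proof is correct, and it follows the route the paper has in mind: the paper itself only restates the result, deferring the (exercise-level) proof to \cite{JJ11lp}, and the natural argument there is exactly what you give. Both key steps are sound: since $b(\eta)$ is $x$-independent, the Fourier multiplier $\psi(2^{-m}D_x)$ commutes with multiplication by $b(\eta)$, so the frequency-modulated symbols for $c$ and for $a$ are related by $\psi(2^{-m}D_x)c\,\psi(2^{-m}\eta)=\tilde a_m\,b$; and the composition rule $\OP(\tilde a_m b)u=\OP(\tilde a_m)\,b(D)u$ holds on $\cal S$ and extends to $\cal S'$ by continuity of $b(D)$ and of $\OP(\sigma)$ for $\sigma\in S^{-\infty}$. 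Since the two sequences in Definition~\ref{a11-defn} then agree for every $m$ and every admissible $\psi$, existence and $\psi$-independence of the limits are equivalent, yielding both $D(c(x,D))=D(a(x,D)b(D))$ and the identity of the actions.
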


This result applies especially to differential operators, say $b(D)=D_{j}$ for simplicity.
But as a minor novelty, the classical commutator identity needs an atypical substitute:

\begin{prop} \label{comm-prop}
  For $a\in S^d_{1,1}$ the commutator 
  \begin{equation}
  [D_j,a(x,D)]= D_j a(x,D)-a(x,D)D_j
  \end{equation}
equals $\OP(D_{x_j}a(x,\eta))$ on the Schwartz space
$\cal S(\Rn)$, whilst in $\cal S'(\Rn)$ there is an inclusion in the operator theoretical sense,
\begin{equation}
  a(x,D)D_j+[D_j,a(x,D)]\subset D_j a(x,D).
\label{dom-incl}
\end{equation}
The commutator symbol $D_{x_j}a(x,\eta)$ is in $S^{d+|\beta|}_{1,1}$.
\end{prop}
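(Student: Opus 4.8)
The plan is to reduce everything to the elementary computation coming from \eqref{axDu-id}, together with a careful bookkeeping of the domains in Definition~\ref{a11-defn}. The symbol claim is immediate from \eqref{pab-eq}: one has $p_{\alpha,\gamma}(D^{\beta}_x a)=p_{\alpha,\gamma+\beta}(a)<\infty$, hence $D^{\beta}_x a\in S^{d+|\beta|}_{1,1}$, and in particular $D_{x_j}a\in S^{d+1}_{1,1}$. By Proposition~\ref{abc-prop} the symbol $a(x,\eta)\eta_j$ lies in $S^{d+1}_{1,1}$ with $\OP(a(x,\eta)\eta_j)=a(x,D)D_j$ (the two sides having the same domain), while $\OP(D_{x_j}a)$ is again a type $1,1$-operator.

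On $\cal{S}(\Rn)$ the assertion is a direct computation. For $u\in\cal{S}$ one has $a(x,D)u\in\cal{S}$ by \eqref{SSS-eq}, and since $\hat u\in\cal{S}$ while $\eta\mapsto D^{\beta}_x a(x,\eta)$ grows at most polynomially, locally uniformly in $x$, one may differentiate \eqref{axDu-id} under the integral sign. Using $D_{x_j}e^{\im x\cdot\eta}=\eta_j e^{\im x\cdot\eta}$ this gives
\[
  D_j\bigl(a(x,D)u\bigr)=\OP(a(x,\eta)\eta_j)u+\OP(D_{x_j}a)u=a(x,D)D_j u+\OP(D_{x_j}a)u ,
\]
the last equality because $\eta_j\hat u=\widehat{D_j u}$; hence $[D_j,a(x,D)]u=\OP(D_{x_j}a)u$ there.

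On $\cal{S}'(\Rn)$ the operators $a(x,D)D_j$, $D_j a(x,D)$ and $\OP(D_{x_j}a)$ live on genuinely different domains, so only the inclusion \eqref{dom-incl} can be expected, and I would obtain it by running the above through the frequency-modulated truncations. For each $m$ the symbol $a_m:=\psi(2^{-m}D_x)a(x,\eta)\psi(2^{-m}\eta)$ lies in $S^{-\infty}$, so $\OP(a_m)$ is defined and continuous on $\cal{S}'$ and the classical commutator identity holds there, $D_j\OP(a_m)u=\OP(a_m)D_j u+\OP(D_{x_j}a_m)u$. The key compatibility is that these truncations are exactly the ones prescribed by \eqref{aPsi-eq} for the symbols $a(x,\eta)\eta_j$ and $D_{x_j}a$: since the Fourier multiplier $\psi(2^{-m}D_x)$ commutes with $D_{x_j}$ and $\psi(2^{-m}\eta)$ is independent of $x$, one gets $D_{x_j}a_m=(D_{x_j}a)_m$ and $\OP(a_m)D_j u=\OP\bigl((a(x,\eta)\eta_j)_m\bigr)u$. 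Now let $u$ lie in the domain of the left side of \eqref{dom-incl}, i.e.\ $u\in D(a(x,D))$ and $u\in D(a(x,D)D_j)$. Then, as $m\to\infty$, Definition~\ref{a11-defn} and Proposition~\ref{abc-prop} give that $\OP(a_m)u\to a(x,D)u$ and $\OP(a_m)D_j u\to a(x,D)D_j u$ in $\cal{D}'$, independently of $\psi$; since $D_j$ is continuous on $\cal{D}'$, $\OP(D_{x_j}a_m)u=D_j\OP(a_m)u-\OP(a_m)D_j u$ converges as well and independently of $\psi$, so $u\in D(\OP(D_{x_j}a))$ and in the limit
\[
  D_j a(x,D)u=a(x,D)D_j u+\OP(D_{x_j}a)u .
\]
Since the right-hand side of \eqref{dom-incl} has domain $D(a(x,D))$, which in general strictly contains $D(a(x,D))\cap D(a(x,D)D_j)$, this is exactly the asserted inclusion and cannot be sharpened to an equality.

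The main obstacle is not analytic but organisational: one must verify that $a\mapsto a_m$ commutes with $D_{x_j}$ and with multiplication by $\eta_j$, so that the limits of $\OP(D_{x_j}a_m)u$ and $\OP(a_m)D_j u$ really are the type $1,1$-operators $\OP(D_{x_j}a)u$ and $a(x,D)D_j u$ in the sense of Definition~\ref{a11-defn}, including the independence of $\psi$; and one must keep track of exactly which intersection of domains the left-hand side of \eqref{dom-incl} carries, since that is what forces ``$\subset$'' rather than ``$=$'' — equality would require $D(a(x,D))$ to be stable under $D_j$, which fails for general type $1,1$-operators.
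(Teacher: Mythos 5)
Your reduction to the frequency-modulated identity $\OP((a\eta_j)_m)u+\OP((D_{x_j}a)_m)u=D_j\OP(a_m)u$, with $a_m=\psi(2^{-m}D_x)a(x,\eta)\psi(2^{-m}\eta)$, together with the observation that the frequency modulation commutes with $D_{x_j}$ and with multiplication by $\eta_j$, is precisely the paper's route; the symbol estimate and the computation on $\cal S$ are also fine.

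The trouble is that you read the left-hand side of \eqref{dom-incl} with the wrong domain, which runs the ``two-out-of-three'' argument in the wrong direction. In \eqref{dom-incl} the notation $[D_j,a(x,D)]$ stands for the independent type $1,1$-operator $\OP(D_{x_j}a)$, \emph{not} for the literal difference $D_ja(x,D)-a(x,D)D_j$; under the latter reading the inclusion would be a tautology, which contradicts the paper's emphasis just before the proposition that the commutator identity ``is replaced \ldots by an operator theoretical \emph{inclusion}'', and also the phrase ``common domain $D(a(x,D)D_j)\cap D([D_j,a(x,D)])$'' in the paper's own proof. The assertion is therefore $D(a(x,D)D_j)\cap D(\OP(D_{x_j}a))\subset D(D_ja(x,D))$, so the correct argument starts from $\psi$-independent convergence of \emph{both} left-hand terms of the displayed identity and infers $\psi$-independent convergence of $D_j\OP(a_m)u$. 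You instead assume $u\in D(a(x,D))\cap D(a(x,D)D_j)$ and deduce $u\in D(\OP(D_{x_j}a))$. That is a true and useful corollary of the same identity (solving for the term whose convergence you lack), but it is a different inclusion: your hypothesis already contains $D(a(x,D))$, which is exactly the set the proposition is trying to \emph{reach} as a conclusion, and since none of $D(a(x,D))$, $D(a(x,D)D_j)$, $D(\OP(D_{x_j}a))$ is in general contained in another, the two statements are not interchangeable. For the same reason your closing remark about why equality fails is aimed at the wrong pair of domains.
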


\begin{proof}
By classical calculations, any modulation function $\psi $ gives
the following formula for $u\in\cal S$, hence for all $u\in\cal S'$ as the symbols are in $S^{-\infty}$,
  \begin{multline}
    \OP(\psi(2^{-m}D_x)a(x,\eta)\psi(2^{-m}\eta)\eta_j)u+
    \OP(\psi(2^{-m}D_x)D_{x_j}a(x,\eta)\psi(2^{-m}\eta))u
\\
=    D_j \OP(\psi(2^{-m}D_x)a(x,\eta)\psi(2^{-m}\eta))u.
  \end{multline}
When both terms on the left have $\psi$-independent limits for $m\to\infty$, so has the right-hand side. 
As the first term then is $a(x,D)D_j u$, cf.\  Proposition~\ref{abc-prop}, this
entails that the common domain $D(a(x,D)D_j)\bigcap D([D_j,a(x,D)])$ is contained
in that of $D_j a(x,D)$, with the same actions.
\end{proof}

The inclusion \eqref{dom-incl} is strict in some cases, for 
the domains are not always invariant under differentiation. This is a well-known consequence of
the classical counterexamples, which are recalled below for the reader's convenience:

\begin{exmp}
  \label{Ching-exmp}
The classical example of a symbol of type $1,1$ results from an auxiliary
function $A\in C^\infty_0(\Rn)$, say with $\supp A\subset\{\,\eta\mid
\tfrac{3}{4}\le |\eta|\le\tfrac{5}{4}\,\}$, and a fixed vector $\theta\in \Rn$,
\begin{equation}
  a_{\theta}(x,\eta)
=\sum_{j=0}^\infty 2^{jd}e^{-\im 2^j{x}\cdot{\theta}}
  A(2^{-j}\eta).
  \label{ching-eq}
\end{equation}
Here $a_{\theta}\in C^\infty(\Rn\times\Rn)$, since the terms are disjointly
supported, and it clearly belongs to $S^d_{1,1}$.

These symbols were used both by Ching \cite{Chi72} and 
Bourdaud \cite{Bou88} to show $L_2$-unboundedness for $d=0$, $|\theta|=1$.
Refining this, H{\"o}rmander~\cite{H88} established that
continuity $H^s\to\cal D'$ with $s>-r$
holds if and only if $\theta$ is a zero of $A$ of order $r\in\N_0$.
\cite{JJ08vfm} gave an extension to $d\in \R$, $\theta\ne0$.

The non-preservation of wavefront sets discovered by Parenti and
Rodino \cite{PaRo78} also relied on $a_{\theta}(x,\eta)$. 
Their ideas were extended to all $n\ge 1$, $d\in\R$ in \cite[Sect.~3.2]{JJ08vfm} 
and refined by applying $a_{2\theta}(x,D)$ to a product 
$v(x)f(x\cdot \theta)$, where $v\in \cal F^{-1}C^\infty_0$ 
is an analytic function controlling the spectrum,
whilst the highly oscillating $f$ is Weierstrass'
nowhere differentiable function for orders $d\in \,]0,1]$, in a complex
version with its wavefront set along a half-line.
(Nowhere differentiability was shown with a small microlocalisation
argument, explored in \cite{JJ10now}.) 

Moreover, $a_{\theta}(x,D)$ is unclosable in $\cal S'$ when $A$ is 
supported in a small ball around $\theta$, as in \cite[Lem.~3.2]{JJ08vfm}.
Hence Definition~\ref{a11-defn} cannot in general be simplified to
a closure of the graph in $\cal S'\times \cal D'$.
\end{exmp}

As a basic result, it was shown in \cite[Sec.~4]{JJ08vfm}
that the $C^\infty$-subspace $\cal S(\Rn)+\cal F^{-1}\cal E'(\Rn)$ always is contained in
the domain of $a(x,D)$ and that
\begin{equation}
  a(x,D)\colon \cal S(\Rn)+\cal F^{-1}\cal E'(\Rn) \to \cal O_M(\Rn).
  \label{aFE-eq}
\end{equation}
In fact, if $u=v+v'$ is any splitting 
with $v\in \cal S$ and $v'\in \cal F^{-1}\cal E'$, then
\begin{equation}
  a(x,D)u= a(x,D)v+\OP(a(1\otimes \chi))v',
  \label{aFE-id}
\end{equation}
whereby $a(1\otimes \chi)(x,\eta)=a(x,\eta)\chi(\eta)$ and $\chi\in
C^\infty_0(\Rn)$ is arbitrarily chosen so that $\chi=1$ holds in a neighbourhood of 
$\supp\cal Fv'\Subset\Rn$.
Here $a(x,\eta)\chi(\eta)$ is in $S^{-\infty }$ so that 
$\OP(a(1\otimes \chi))$ is defined on $\cal S'$.
Hence $a(x,D)(\cal F^{-1}\cal E')\subset \cal O_M(\Rn)$.

It is a virtue of \eqref{aFE-eq} that $a(x,D)$ is compatible with
for example $\OP(S^\infty_{1,0})$; cf.\ \cite{JJ08vfm} for other compatibility questions. 
Therefore some well-known facts extend to type $1,1$-operators:

\begin{exmp}
  \label{poly-exmp}
Each $a(x,D)$ of type $1,1$ is defined on all polynomials and
\begin{equation}
  a(x,D)(x^\alpha)
  = D^{\alpha}_\eta (e^{\im x\cdot \eta}a(x,\eta))\bigm|_{\eta=0}.
  \label{apoly-eq}
\end{equation}
In fact, $f(x)=x^\alpha$ has
$\hat f(\eta)=(2\pi)^n(-D_\eta)^\alpha\delta_0(\eta)$ with
support $\{0\}$, so it is seen for $v=0$ in \eqref{aFE-id} that
$a(x,D)f(x)=\dual{\hat f}{(2\pi)^{-n}e^{\im\dual{x}{\cdot }}a(x,\cdot )\chi(\cdot )}$
where $\chi=1$ around $0$; thence \eqref{apoly-eq}.
\end{exmp}
\begin{exmp}
Also when $A=a(x,D)$ is of type $1,1$, one can recover its symbol from the
formula
\begin{equation}
  a(x,\xi)= e^{-\im x\cdot \xi} A(e^{\im x\cdot \xi}).
\end{equation}
Here $\cal F e^{\im\dual{\cdot }{\xi}}=(2\pi)^n\delta_{\xi}(\eta)$ has
compact support, so again it follows from \eqref{aFE-eq} that 
(via a suppressed cut-off) 
one has $A(e^{\im\dual{\cdot }{\xi}})
=\dual{\delta_\xi}{e^{\im\dual{x}{\cdot }}a(x,\cdot )}
=e^{\im x\cdot \xi}a(x,\xi)$.
\end{exmp}

\subsection{General Smooth Functions}
To go beyond the smooth functions in \eqref{aFE-eq}, it is shown in this
subsection how one can extend a remark by Bourdaud \cite{Bou87} on singular integral operators,
which shows that every type $1,1$ symbol $a(x,\eta)$ of order $d=0$ induces a map
$\tilde A\colon \cal O_M\to \cal D'$.

Indeed, Bourdaud defined $\tilde Af$ for $f\in\cal O_M(\Rn)$
as the distribution that on $\varphi\in C^\infty_0(\Rn)$ is given by
the following, using the distribution kernel $K$ of $a(x,D)$ and an auxiliary function 
$\chi\in C^\infty_0(\Rn)$ equal to $1$ on a neighbourhood of $\supp\varphi$,
\begin{equation}
  \dual{\tilde Af}{\varphi}= \dual{a(x,D)(\chi f)}{\varphi}
   +\iint K(x,y)(1-\chi(y))f(y)\varphi(x)\,dy\,dx.
  \label{aOC-eq}
\end{equation}
However, to free the discussion from the slow growth in $\cal O_M$, 
one may restate this in terms of the tensor product $1\otimes f$ in 
$\cal S'(\Rn\times \Rn)$ acting on $(\varphi\otimes (1-\chi))K\in \cal
S(\Rn\times \Rn)$, i.e.\ 
\begin{equation}
  \dual{\tilde Af}{\varphi}= \dual{a(x,D)(\chi f)}{\varphi}
   +\dual{1\otimes f}{(\varphi\otimes (1-\chi))K},
  \label{aOC'-eq}
\end{equation}
One advantage here is that both terms obviously make sense as long as
$f$ is smooth and temperate, i.e.\ for every 
$f\in C^\infty (\Rn)\bigcap\cal S'(\Rn)$.

Moreover, for $\varphi$ with support in the interior $\cal C^\circ $ 
of a compact set $\cal C\subset \Rn$ and $\chi=1$ on a neighbourhood of
$\cal C$, the right-hand side of \eqref{aOC'-eq}
gives the same value for any $\tilde \chi\in C^\infty _0$ equal to $1$
around $\cal C$, for in the difference of the right-hand sides equals $0$ since
$\dual{a(x,D)((\chi-\tilde \chi)f)}{\varphi}$ is seen from the kernel relation to equal
$-\dual{1\otimes f}{(\varphi(\tilde \chi-\chi))K}$.

Crude estimates of \eqref{aOC'-eq} now show
that $\tilde Af$ yields a distribution in $\cal D'(\cal C^\circ )$, and
the above $\chi$-independence implies that it coincides in 
$\cal D'(\cal C^\circ \bigcap \cal C_1^\circ )$ with the
distribution defined from another compact set $\cal C_1$. Since
$\Rn=\bigcup \cal C^\circ $, the \emph{recollement de morceaux} theorem yields that
a distribution $\tilde A f\in \cal D'(\Rn)$ is defined by \eqref{aOC'-eq}.

There is also a more explicit formula for $\tilde Af$:
when $\tilde \varphi\in C^\infty _0$ is chosen so that $\tilde \varphi\equiv 1$ around $\cal C$
while $\supp\tilde\varphi$ has a neighbourhood where $\chi=1$,
then $\varphi=\tilde\varphi\varphi$ in \eqref{aOC'-eq} gives, for $x\in\cal C^\circ$,
\begin{equation}
  \tilde A f(x)=a(x,D)(\chi f)(x)+\dual{f}{(\tilde \varphi(x)(1-\chi(\cdot)))K(x,\cdot )}.
\label{aOC''-eq}
\end{equation}
Now $\tilde Af \in C^\infty$ follows, for the first term is in $\cal S$,
and the second coincides in $\cal C^\circ$ with a function in 
$\cal S$, as a corollary to the construction of 
$g\otimes f\in \cal S'(\Rn\times \Rn)$ for $f,g\in \cal S'(\Rn)$.

Post festum, it is seen in \eqref{aOC'-eq} that when $f\to0$ in both $C^\infty$ and
in $\cal S'$, then $\chi f\to0$ in $\cal S$ while $1\otimes f\to 0$ in $\cal S'$.
Therefore $\tilde Af\to0$ in $\cal D'$, which is a basic continuity property of
$\tilde A$. 

By setting $\tilde A$ in relation to Definition~\ref{a11-defn}, the above gives 
the new result that $a(x,D)$ always is a map defined on the
\emph{maximal} set of smooth functions, i.e.\ on $C^\infty \bigcap \cal S'$:

\begin{thm}   \label{aOO-thm}
Every $a(x,D)\in \OP(S^d _{1,1}(\Rn\times \Rn))$ restricts to a map
\begin{equation}
  a(x,D)  \colon C^\infty (\Rn)\bigcap \cal S'(\Rn)\to C^\infty (\Rn),
\label{aOO-eq}
\end{equation}
which locally is given by formula \eqref{aOC''-eq}.
The map \eqref{aOO-eq} is continuous when $C^\infty(\Rn)$ has the usual Fr\'echet space
structure and $\cal S'(\Rn)$ has the strong dual topology. 
\end{thm}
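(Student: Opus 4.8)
The plan is to reduce Theorem~\ref{aOO-thm} to the analysis of the auxiliary operator $\tilde A$ already carried out in the preceding subsection, and to the compatibility result \eqref{aFE-eq}--\eqref{aFE-id}. First I would note that it suffices to treat $d=0$: given $a\in S^d_{1,1}$, one factors through a fixed elliptic, invertible classical symbol, say writing $a(x,\eta)=\tilde a(x,\eta)\langle\eta\rangle^d$ with $\tilde a\in S^0_{1,1}$ and $\langle\eta\rangle^d\in S^d_{1,0}$, and invokes Proposition~\ref{abc-prop} to get $a(x,D)u=\tilde a(x,D)(\langle D\rangle^d u)$; since $\langle D\rangle^d$ maps $C^\infty\cap\cal S'$ continuously into itself (it is in $\OP(S^d_{1,0})$ and commutes with the structure of $\cal O_M$-type arguments), the general case follows from the order-zero case. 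So from now on $d=0$.

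Next I would establish that for $f\in C^\infty\cap\cal S'$ the distribution $\tilde A f$ constructed via \eqref{aOC'-eq}--\eqref{aOC''-eq} actually equals $a(x,D)f$ in the sense of Definition~\ref{a11-defn}; this is the crux. The strategy is to test convergence of the frequency-modulated operators against $\varphi\in C_0^\infty$: writing $\OP(\psi(2^{-m}D_x)a\,\psi(2^{-m}\eta))f=:A_m f$ with kernel $K_m$, one has $\dual{A_m f}{\varphi}=\dual{K_m}{\varphi\otimes f}$, and one splits $1=\chi+(1-\chi)$ in the $y$-variable with $\chi\equiv1$ near $\supp\varphi$. On the near-diagonal piece $\varphi\otimes\chi$ one uses that $\chi f\in\cal S$ (as $f\in C^\infty$ is localised) so that $A_m(\chi f)\to a(x,D)(\chi f)$ in $\cal S$ by \eqref{SSS-eq} and the defining limit for Schwartz arguments. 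On the off-diagonal piece $\varphi\otimes(1-\chi)$ one invokes Remark~\ref{KmS-rem}: by \eqref{KmS-eq} one has $\varphi(x)(1-\chi(y))K_m(x,y)\to\varphi(x)(1-\chi(y))K(x,y)$ in $\cal S(\Rn\times\Rn)$, whence pairing against $1\otimes f\in\cal S'(\Rn\times\Rn)$ gives $\dual{1\otimes f}{(\varphi\otimes(1-\chi))K_m}\to\dual{1\otimes f}{(\varphi\otimes(1-\chi))K}$. Adding the two pieces yields $\dual{A_m f}{\varphi}\to\dual{a(x,D)(\chi f)}{\varphi}+\dual{1\otimes f}{(\varphi\otimes(1-\chi))K}=\dual{\tilde A f}{\varphi}$, which is manifestly independent of $\psi$ and of the auxiliary $\chi$ (the latter by the $\chi$-independence already proved for \eqref{aOC'-eq}). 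Hence $f\in D(a(x,D))$ and $a(x,D)f=\tilde A f$, so $a(x,D)$ restricts to the map \eqref{aOO-eq}; smoothness of the image and the local formula \eqref{aOC''-eq} were already recorded in the discussion preceding the theorem.

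Finally, continuity: I would use the explicit local representation \eqref{aOC''-eq} on a relatively compact open set $\cal C^\circ$. If $f_k\to f$ in $C^\infty$ and in the strong topology of $\cal S'$, then $\chi f_k\to\chi f$ in $\cal S$ (compactly supported smooth functions converging in every seminorm), so $a(x,D)(\chi f_k)\to a(x,D)(\chi f)$ in $\cal S$, in particular in $C^\infty(\cal C^\circ)$; and $f_k\to f$ in $\cal S'$ paired against the fixed family $x\mapsto(\tilde\varphi(x)(1-\chi(\cdot)))K(x,\cdot)$ — which is a bounded, indeed continuous, $\cal S'$-valued (in fact $\cal S$-valued after localisation) function of $x\in\cal C^\circ$ together with all its $x$-derivatives, by the construction of $g\otimes f$ — converges uniformly on $\cal C^\circ$ with all derivatives. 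Hence $a(x,D)f_k\to a(x,D)f$ in $C^\infty(\cal C^\circ)$, and since $\Rn$ is covered by such sets, in $C^\infty(\Rn)$.

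The main obstacle is the off-diagonal limit, i.e.\ justifying that the commutator of the convolution structure of $K_m$ with multiplication by $\varphi\otimes(1-\chi)$ does not spoil $\cal S(\Rn\times\Rn)$-convergence; but this is exactly what Remark~\ref{KmS-rem} (via \cite[Prop.~6.3]{JJ08vfm} and the Regular Convergence Lemma) supplies, so the argument goes through once that is quoted. A secondary subtlety is checking that pairing $1\otimes f$ with an $\cal S(\Rn\times\Rn)$-convergent sequence is legitimate, which is immediate once one knows $1\otimes f\in\cal S'(\Rn\times\Rn)$ for $f\in\cal S'(\Rn)$.
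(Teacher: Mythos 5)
Your proof follows essentially the same route as the paper: identify $\tilde A$ with $a(x,D)$ on $C^\infty\cap\cal S'$ by testing $\dual{A_m f}{\varphi}$, splitting $y$-integration via $\chi+(1-\chi)$, invoking Remark~\ref{KmS-rem} (the $\cal S(\Rn\times\Rn)$-convergence of $\varphi\otimes(1-\chi)K_m$) for the off-diagonal piece, and concluding $\psi$-independence from the $\psi$-free formula \eqref{aOC'-eq}; the continuity argument from the local representation \eqref{aOC''-eq} also matches the paper. The one real deviation is the opening reduction to $d=0$ by writing $a=\tilde a\,\langle\eta\rangle^d$, and this step is both superfluous and under-justified: superfluous because nothing in the remainder of your argument actually uses $d=0$ (the $\tilde A$ construction and the kernel-convergence argument are order-independent), and under-justified because the asserted continuity of $\langle D\rangle^d$ on $C^\infty\cap\cal S'$ (with the $C^\infty$-plus-strong-$\cal S'$ topology) is itself a non-trivial claim of the same flavour as the theorem being proved, not a freebie from ``$\cal O_M$-type arguments.'' Dropping that paragraph would leave a clean proof that coincides with the paper's.
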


The intersection $C^\infty\bigcap\cal S'$ is topologised by enlarging the set of
seminorms on $C^\infty$ by those on $\cal S'$. Here the latter have the form
$f\mapsto \sup_{\psi\in\cal B}|\dual f\psi|$ for an arbitrary bounded set
$\cal B\subset\cal S$.

\begin{proof}
Let $A_m=\OP(\psi(2^{-m}D_x)a(x,\eta)\psi(2^{-m}\eta))$
so that  $a(x,D)u=\lim_m A_mu$ when $u$ belongs to $D(a(x,D))$.
With $f\in C^\infty \bigcap \cal S'$ and $\varphi, \chi$ as above,
this is the case for $u=\chi f\in C^\infty_0$.

Exploiting the convergence in Remark~\ref{KmS-rem} in \eqref{aOC'-eq}, it is seen that
\begin{equation}
  \dual{\tilde Af}{\varphi}= \lim_m \dual{A_m(\chi f)}{\varphi}
   +\lim_m\iint K_m(x,y)(1-\chi(y))f(y)\varphi(x)\,dy\,dx.
  \label{aOCm-eq}
\end{equation}
Here the integral equals $\dual{A_m(f-\chi f)}{\varphi}$ by the kernel
relation, for $A_m\in \OP(S^{-\infty })$ and $f$ may as an element of $\cal S'$ 
be approached from $C^\infty_0$. So \eqref{aOCm-eq} yields
\begin{equation}
  \dual{\tilde Af}{\varphi}=\lim_m \dual{A_m(\chi f)}{\varphi}
   +\lim_m \dual{A_m(f-\chi f)}{\varphi} =
  \lim_m \dual{A_m f}{\varphi}.
\end{equation}
Thus $A_mf\to \tilde Af$, which by \eqref{aOC'-eq} is independent of $\psi$. 
Hence $\tilde A\subset a(x,D)$ as desired.

That $a(x,D)(C^\infty\cap\cal S')$ is contained in $C^\infty$ now follows from the remarks to \eqref{aOC''-eq}.

When $f\to 0$ in $C^\infty$, then clearly $D^\alpha a(x,D)(f\chi)\to0$ in $\cal S$, 
hence uniformly on $\Rn$. 
It is also straightforward to see that $(\tilde \varphi(x)(1-\chi(\cdot)))K(x,\cdot )$ stays
in a bounded set in $\cal S(\Rn)$ as $x$ runs through $\cal C$. Therefore,  
when $f\to 0$ also in the \emph{strong} dual topology on $\cal S'$, then the second term in
\eqref{aOC''-eq} tends to $0$ uniformly with respect to $x\in \cal C$. 
As $x$-derivatives may fall on $K(x,\cdot)$, the same
argument gives that $\sup_{\cal C}|D^\alpha \tilde Af|\to 0$. 
Hence $f\to 0$ in $C^\infty\bigcap\cal S'$ implies
$D^\alpha \tilde Af\to 0$ in $C^\infty(\Rn)$, which gives the stated continuity property.
\end{proof}

\begin{rem}
  It is hardly a drawback that continuity in Theorem~\ref{aOO-thm} holds for the
  strong dual topology on $\cal S'$, as for
  \emph{sequences} weak and strong convergence are equivalent 
  (a well-known consequence of the fact that $\cal S$ is a Montel space). 
\end{rem}

In view of Theorem~\ref{aOO-thm}, the
difficulties for type $1,1$-operators do not stem from growth at infinity 
for $C^\infty $-functions. Obviously the codomain $C^\infty $
is not contained in $\cal S'$, but this is not just
made possible by the use of $\cal D'$ in Definition~\ref{a11-defn},
it is indeed decisive for the above construction.

In the proof above, the fact that $\tilde A f\in C^\infty$ also follows 
from the pseudo-local property of $a(x,D)$; cf
\cite[Thm.~6.4]{JJ08vfm}. The direct argument above is rather short, though.
In addition to the smoothness, the properties of $a(x,D)f$ can be further sharpened by slow growth
of $f$:

\begin{cor}  \label{aOO-cor}
Every type $1,1$-operator $a(x,D)$ leaves $\cal O_M(\Rn)$ invariant.  
\end{cor}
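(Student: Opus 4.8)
The plan is to obtain Corollary~\ref{aOO-cor} from Theorem~\ref{aOO-thm} by tracking how the constants in the local formula \eqref{aOC''-eq} grow with the point around which one localises. Since $\cal O_M(\Rn)\subset C^\infty(\Rn)\cap\cal S'(\Rn)$, Theorem~\ref{aOO-thm} already gives $a(x,D)f\in C^\infty(\Rn)$ for every $f\in\cal O_M$, so the only thing left is to produce, for each multiindex $\alpha$, a bound $|D^\alpha a(x,D)f(x)|\le c(1+|x|)^N$ with $N$ independent of $x$. To this end I would fix $x_0\in\Rn$ and apply \eqref{aOC''-eq} on a ball $\cal C^\circ=B(x_0,r)$ of fixed radius $r>0$, choosing the cut-offs there as rigid translates $\chi(y)=\chi_0(y-x_0)$ and $\tilde\varphi(x)=\tilde\varphi_0(x-x_0)$ of fixed functions $\chi_0,\tilde\varphi_0\in C_0^\infty(\Rn)$ arranged so that \eqref{aOC''-eq} holds on $B(x_0,r)$; then all the inclusion relations among the supports, and in particular a lower bound $|x-y|\ge\delta>0$ valid whenever $\tilde\varphi(x)(1-\chi(y))\ne0$, are independent of $x_0$.

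For the first term $a(x,D)(\chi f)$ the point is that $\chi f\in C_0^\infty(\Rn)$ is supported in a ball of fixed radius $r'$ about $x_0$, so the continuity \eqref{SSS-eq} of $(a,u)\mapsto a(x,D)u$ on $S^d_{1,1}\times\cal S$ bounds $\|D^\alpha a(x,D)(\chi f)\|_\infty$ by finitely many Schwartz seminorms of $\chi f$; by Leibniz' rule each of these is $\le c(1+|x_0|)^{|\beta|}\max_{\gamma'\le\gamma}\sup_{|y-x_0|\le r'}|D^{\gamma'}f(y)|$ for finitely many $\beta,\gamma$, which by the very definition of $\cal O_M$ is $\le c(1+|x_0|)^N$, with $N$ the largest relevant growth exponent of $f$.

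For the second term $\dual{f}{(\tilde\varphi(x)(1-\chi(\cdot)))K(x,\cdot)}$ I would invoke the standard uniform off-diagonal kernel estimates $|D_x^\beta D_y^\gamma K(x,y)|\le C_{\beta,\gamma,M}(1+|x-y|)^{-M}$, valid for $|x-y|\ge\delta$ and every $M$, which follow by integrations by parts in \eqref{axDu-id} together with $D_x^\beta a\in S^{d+|\beta|}_{1,1}$. Combined with the uniform lower bound $|x-y|\ge\delta$ on the region where the relevant test function is supported, a crude estimate shows that, for $x\in B(x_0,r)$, the function $y\mapsto D_x^\alpha\bigl(\tilde\varphi(x)(1-\chi(y))K(x,y)\bigr)$ lies in a bounded subset of $\cal S(\Rn)$ after division by $(1+|x_0|)^{K_\alpha}$ for a suitable $K_\alpha$; pairing with $f\in\cal S'$ then yields $|D_x^\alpha(\text{second term})|\le c(1+|x_0|)^{K_\alpha}$ on $B(x_0,r)$. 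Adding the two bounds and using $|x_0|\le|x|+r$ for $x\in B(x_0,r)$ gives $|D^\alpha a(x,D)f(x)|\le c(1+|x|)^{N'}$ for all $x$, i.e.\ $a(x,D)f\in\cal O_M(\Rn)$.

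The only point needing care is the bookkeeping ensuring the cut-offs can be taken as rigid translates, so that no constant hides an $x_0$-dependence beyond the displayed polynomial factors; the off-diagonal kernel estimates and the continuity \eqref{SSS-eq} are routine. (Smoothness of $a(x,D)f$ could alternatively be read off from pseudo-locality, but the growth control still requires the explicit formula \eqref{aOC''-eq}.)
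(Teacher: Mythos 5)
Your proof is correct, and it proceeds by the same overall strategy as the paper: start from the local representation \eqref{aOC''-eq} provided by the proof of Theorem~\ref{aOO-thm}, and track the growth in the base point of the constants appearing in the two contributions. The differences are worth noting. First, your localisation scheme uses balls $B(x_0,r)$ of \emph{fixed} radius with cut-offs that are rigid translates of one pair $\chi_0,\tilde\varphi_0$, whereas the paper uses concentric balls $\cal C=\Bbar(0,j)$ with radius tending to infinity and concentric cut-offs; both keep the separation $r=\op{dist}(\supp\tilde\varphi,\supp(1-\chi))$ fixed (equal to $1$ in the paper's case), which is what matters. Second, and more substantively, you estimate \emph{both} terms of \eqref{aOC''-eq}: the near part $a(x,D)(\chi f)$ via the continuity \eqref{SSS-eq}, observing that the Schwartz seminorms of $\chi f$ grow polynomially in $|x_0|$ because $\supp\chi f\subset \Bbar(x_0,r')$ and $f\in\cal O_M$; and the far part via off-diagonal kernel decay $|D_x^\beta D_y^\gamma K(x,y)|\le C_{\beta,\gamma,M}(1+|x-y|)^{-M}$ on $|x-y|\ge\delta$, extracting a polynomial $(1+|x_0|)^{K_\alpha}$ and leaving a family of test functions bounded in $\cal S$. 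The paper's short proof only spells out the far term (and there it derives the kernel decay on the spot by iterating $\lap_\eta$ on the symbol, rather than quoting it); the polynomial growth of the near term $a(x,D)(\chi_j f)$ as $j$ grows is left implicit. Your argument makes that step explicit, which is a genuine gain in completeness, and nothing in it overclaims: the exponent $K_\alpha$ in the far term does depend on the temperate order of $f$, as it must, and the Leibniz bookkeeping in the near term is exactly what is needed.
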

\begin{proof}
  If $f\in\cal O_M$, then it follows that $(1+|x|)^{-2N}D^\alpha \tilde Af(x) $ is
bounded for sufficiently large $N$, since in the second contribution to \eqref{aOC''-eq} 
clearly $(1+|y|)^{-2N}f(y)$ is in $L_1$ for large $N$:
the resulting factor $(1+|y|)^{2N}$ may be absorbed by $K$, using that 
$r=\op{dist}(\supp\tilde\varphi,\supp(1-\chi))>0$, since for $x\in\supp\tilde\varphi$,
$y\in \supp (1-\chi)$,
\begin{equation}
  \begin{split}
  (1+|y|)^{2N}|D^\alpha_x K(x,y)|&\le 
  (1+|x|)^{2N}\max(1,1/r)^{2N}(r+|x-y|)^{2N}|D^\alpha_x K(x,y)|
\\
  &\le c(1+|x|)^{2N}\sup_{x\in\Rn}\int |(4\lap_\eta)^N \big((\eta+D_x)^\alpha a(x,\eta)\big)|\,d\eta,  
  \end{split}
\label{OM-ineq}
\end{equation}
where the supremum is finite for $2N>d+|\alpha|+n$ (by induction
$(\eta+D_x)^\alpha\colon S^d_{1,1}\to S^{d+|\alpha|}_{1,1}$). Moreover,
$c=\max(1,1/r)^{2N}/(2\pi)^{n}$  can be chosen uniformly for $x\in\Rn$ as it suffices 
to have \eqref{aOC''-eq} with $0\le\chi\le1$ and $\cal C=\Bbar(0,j)$,
$\supp\tilde\varphi=\Bbar(0,j+1)$ and $\chi^{-1}(\{1\})=\Bbar(0,j+2)$ for an arbitrarily large
$j\in\N$, which yields $r=1$, $c\le1$.
Thus $(1+|x|)^{-2N}|D^\alpha \tilde Af(x)|$ is less than
$s_{\alpha,N}\int_{\Rn}(1+|y|)^{-2N}|f(y)|\,dy$
for all $x\in\Rn$, $s_{\alpha,N}$ as the sup in \eqref{OM-ineq}. Hence $\tilde Af\in \cal O_M$.
\end{proof}

\begin{exmp}  \label{nonslow-exmp}
The space $C^\infty (\Rn)\bigcap \cal S'(\Rn)$ clearly contains
functions of non-slow growth, e.g.\ 
\begin{equation}
  f(x)=e^{x_1+\dots +x_n}\cos(e^{x_1+\dots +x_n}).
\end{equation}
Recall that $f\in \cal S'$ because $f=\im D_1 g$ for
$g(x)=\sin(e^{x_1+\dots +x_n})$, which is in $L_\infty \subset \cal S'$.
But $g\notin \cal O_M$, so already for $a(x,D)=\im D_1$ the space $\cal O_M$ cannot
contain the range in Theorem~\ref{aOO-thm}.
\end{exmp}

\begin{rem}
Prior to the $T1$-theorem, David and Journ\'e explained in \cite{DaJo84} how
a few properties of the distribution kernel of a continuous map $T\colon
C^\infty_0(\Rn)\to\cal D'(\Rn)$ makes $T(1)$ well defined modulo
constants; in particular if $T\in \OP(S^0_{1,1})$. 
Bourdaud~\cite{Bou87} used this construction for $\tilde A$,
so by Theorem~\ref{aOO-thm} this extension of $T\in \OP(S^0_{1,1})$ from \cite{DaJo84} 
is contained in Definition~\ref{a11-defn}. 
\end{rem}

\subsection{Conditions along the Twisted Diagonal}   \label{TDC-ssect}
As the first explicit condition formulated for the symbol of a type $1,1$-operator,
H{\"o}rmander \cite{H88} proved that $a(x,D)$ has an extension by continuity 
\begin{equation}
  H^{s+d}\to H^s \quad\text{for every $s\in \R$}   
\end{equation}
whenever $a\in S^d_{1,1}(\Rn\times \Rn)$
fulfils the \emph{twisted diagonal condition}: for some $B\ge 1$ 
\begin{equation}
  \hat a(\xi,\eta)=0 \quad\text{where}\quad
    B(1+|\xi+\eta|)< |\eta|.
  \label{tdc-cnd}
\end{equation}
In detail this means that the partially Fourier transformed symbol 
$\hat a(\xi,\eta):=\cal F_{x\to\xi}a(x,\eta)$
is trivial in a conical neighbourhood of a non-compact part of the twisted
diagonal 
\begin{equation}
  \cal T=\{\,(\xi,\eta)\in \Rn\times \Rn\mid \xi+\eta=0\,\}.  
\end{equation}
Localisations to certain conical neighbourhoods of $\cal T$
were also used by H{\"o}rmander \cite{H88,H89,H97} as
\begin{equation}
  \hat a_{\chi,\varepsilon}(\xi,\eta)
  =\hat a(\xi,\eta)\chi(\xi+\eta,\varepsilon\eta),
  \label{axe-eq}
\end{equation}
whereby the cut-off function $\chi\in C^\infty (\Rn\times \Rn)$ is chosen to satisfy
\begin{gather}
  \chi(t\xi,t\eta)= \chi(\xi,\eta)\quad\text{for}\quad t\ge 1,\ |\eta|\ge 2
\label{chi1-eq}   \\
  \supp \chi\subset \{\,(\xi,\eta)\mid 1\le |\eta|,\ |\xi|\le
|\eta|\,\}
\label{chi2-eq} \\
  \chi=1 \quad\text{in}\quad 
  \{\,(\xi,\eta)\mid 2\le |\eta|,\ 2|\xi|\le |\eta|\,\}.
  \label{chi3-eq}
\end{gather}
Using this, H{\"o}rmander introduced and analysed a
milder condition than the strict vanishing in \eqref{tdc-cnd}.
Namely, for some $\sigma\in\R$
the symbol should satisfy an estimate, for all multiindices $\alpha$ and $0<\varepsilon<1$,
\begin{equation}
  N_{\chi,\varepsilon,\alpha}(a):=
  \sup_{\substack{R>0,\\ x\in \Rn}}R^{-d}\big(
  \int_{R\le |\eta|\le 2R} |R^{|\alpha|}D^\alpha_{\eta}a_{\chi,\varepsilon}
  (x,\eta)|^2\,\frac{d\eta}{R^n}
  \big)^{\frac12}
  \le c_{\alpha,\sigma} \varepsilon^{\sigma+n/2-|\alpha|}.
  \label{Hsigma-eq}
\end{equation}
This is an asymptotic formula for $\varepsilon\to0$. It always holds for $\sigma=0$,
cf.\ \cite[Lem.~9.3.2]{H97}:

\begin{lem}   \label{Heps-lem}
When $a\in S^d_{1,1}(\Rn\times \Rn)$ and $0<\varepsilon\le 1$, then
$a_{\chi,\varepsilon}\in C^\infty $ and 
\begin{gather}
  |D^\alpha_{\eta}D^\beta_x a_{\chi,\varepsilon}(x,\eta)|\le 
   C_{\alpha,\beta}(a)\varepsilon^{-|\alpha|}
  (1+|\eta|)^{d-|\alpha|+|\beta|}
  \\
  \big(
  \int_{R\le |\eta|\le 2R} |D^\alpha_{\eta}a_{\chi,\varepsilon}
  (x,\eta)|^2\,d\eta
  \big)^{1/2}
  \le C_{\alpha} R^d (\varepsilon R)^{n/2-|\alpha|}.
\end{gather} 
The map $a\mapsto a_{\chi,\varepsilon}$ is continuous in $S^d_{1,1}$.
\end{lem}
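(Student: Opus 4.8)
The plan is to obtain all three estimates directly from the defining formula $\hat a_{\chi,\varepsilon}(\xi,\eta)=\hat a(\xi,\eta)\chi(\xi+\eta,\varepsilon\eta)$ by inspecting where the cut-off $\chi$ lives and differentiating under the inverse Fourier transform in $\xi$. First I would record the elementary consequence of \eqref{chi2-eq}: on $\supp\chi(\,\cdot\,,\varepsilon\,\cdot\,)$ one has $|\varepsilon\eta|\ge 1$ and $|\xi+\eta|\le|\varepsilon\eta|$, hence $|\xi|\le(1+\varepsilon)|\eta|\le 2|\eta|$ and $|\eta|\ge 1/\varepsilon$; in particular on the support the variable $\xi$ is confined to the ball $|\xi+\eta|\le\varepsilon|\eta|$, whose volume is $O((\varepsilon|\eta|)^n)=O((\varepsilon R)^n)$ on the corona $R\le|\eta|\le 2R$. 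The scaling relation \eqref{chi1-eq} gives that $D^\gamma_\xi[\chi(\xi+\eta,\varepsilon\eta)]$ is $O((\varepsilon|\eta|)^{-|\gamma|})$ there, and similarly $D^\delta_\eta$ acting on the second slot costs a factor $O(\varepsilon(\varepsilon|\eta|)^{-1})=O(|\eta|^{-1})$ per derivative; these are the only two sources of $\varepsilon$-loss.

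For the first (pointwise) estimate I would write $a_{\chi,\varepsilon}(x,\eta)=\cal F^{-1}_{\xi\to x}\big(\hat a(\xi,\eta)\chi(\xi+\eta,\varepsilon\eta)\big)$ and apply $D^\alpha_\eta D^\beta_x$. The $x$-derivatives produce powers of $\xi$ inside the integral; the $\eta$-derivatives distribute by Leibniz onto $\hat a$ and onto $\chi$. Using that $|\xi|\le 2|\eta|$ on the support, the polynomial weight in $\xi$ is controlled by $(1+|\eta|)^{|\beta|}$; the symbol estimates for $a$ (in the form $\sup_x\int|(\eta+D_x)^\alpha\cdots|$, as already used in \eqref{OM-ineq}) bound the $\hat a$-part, and each $\eta$-derivative hitting $\chi$ contributes a single factor $\varepsilon^{-1}$ as noted above — giving the claimed $\varepsilon^{-|\alpha|}(1+|\eta|)^{d-|\alpha|+|\beta|}$. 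For the $L_2$-corona estimate I would instead keep things on the Fourier side as long as possible: $D^\alpha_\eta a_{\chi,\varepsilon}(x,\eta)$ is, for each fixed $\eta$, the inverse Fourier transform in $\xi$ of a function supported in $|\xi+\eta|\le\varepsilon|\eta|$, and the Cauchy--Schwarz (or Plancherel-type) bound together with the support volume $O((\varepsilon R)^n)$ and the uniform bound $R^d(\varepsilon R)^{-|\alpha|}$ for the integrand yields $\big(\int_{R\le|\eta|\le 2R}|D^\alpha_\eta a_{\chi,\varepsilon}|^2\,d\eta\big)^{1/2}\le C_\alpha R^d(\varepsilon R)^{n/2-|\alpha|}$; the $(\varepsilon R)^{n/2}$ comes from the volume and the $(\varepsilon R)^{-|\alpha|}$ from the $\chi$-derivatives.

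Finally, continuity of $a\mapsto a_{\chi,\varepsilon}$ in $S^d_{1,1}$ is immediate once the above estimates are made explicit with constants $C_{\alpha,\beta}(a)$, $C_\alpha$ that are themselves continuous seminorms in $a$ (indeed finite sums of the $p_{\gamma,\delta}(a)$ from \eqref{pab-eq}): the map is linear, and the first displayed inequality shows that each seminorm $p_{\alpha,\beta}$ of $a_{\chi,\varepsilon}$ — with the weight adjusted by $\varepsilon^{-|\alpha|}$, which is harmless for fixed $\varepsilon$ — is dominated by a fixed finite combination of seminorms of $a$. The one place requiring genuine care, rather than routine bookkeeping, is the Leibniz expansion of $D^\alpha_\eta$: the $\eta$-derivative acts on $\chi$ through both the $\xi+\eta$ slot and the $\varepsilon\eta$ slot, and one must check that the worst case is exactly $\varepsilon^{-|\alpha|}$ and not worse — this is where \eqref{chi1-eq} and the lower bound $|\eta|\ge 1/\varepsilon$ on the support must be combined correctly so that derivatives in the first slot cost $(\varepsilon|\eta|)^{-1}\le 1$ while only derivatives in the second slot cost $\varepsilon\cdot(\varepsilon|\eta|)^{-1}$, and the stated powers of $\varepsilon$ and $(1+|\eta|)$ emerge cleanly.
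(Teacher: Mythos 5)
The paper does not reprove the two displayed estimates; its ``proof'' of Lemma~\ref{Heps-lem} consists in citing H{\"o}rmander \cite[Lem.~9.3.2]{H97} for them and then noting (the remark just after the statement) that the constants appearing in that proof are continuous seminorms on $S^d_{1,1}$, which gives the continuity of $a\mapsto a_{\chi,\varepsilon}$. Your sketch of the pointwise bound is in the right spirit: on the support of $\chi(\xi+\eta,\varepsilon\eta)$ one has $\varepsilon|\eta|\ge 1$ and $|\xi|\le(1+\varepsilon)|\eta|\le 2|\eta|$, so $x$-derivatives cost $(1+|\eta|)^{|\beta|}$ by Bernstein, while each $\eta$-derivative landing on $\chi$ costs $O((\varepsilon|\eta|)^{-1})=O(\varepsilon^{-1}|\eta|^{-1})$ by the degree-$0$ homogeneity of $\chi$. (Your closing paragraph inverts which slot is worst: $\partial_{\xi'}\chi$ has homogeneity degree $-1$ and gives the $\varepsilon^{-1}|\eta|^{-1}$, whereas the chain-rule contribution $\varepsilon\,\partial_{\eta'}\chi$ gives only $|\eta|^{-1}$; it is the first slot, not the second, that is responsible for the $\varepsilon^{-|\alpha|}$ loss.) Extracting the continuity from the resulting constants is then routine, as you say.

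The genuine gap is in the $L_2$-corona estimate. What has to be shown is a strict improvement of order $\varepsilon^{n/2}$ over the pointwise bound: inserting $|D^\alpha_\eta a_{\chi,\varepsilon}|\le C\varepsilon^{-|\alpha|}R^{d-|\alpha|}$ into the integral over the corona $R\le|\eta|\le 2R$, whose $\eta$-volume is comparable to $R^n$ and not to $(\varepsilon R)^n$, gives only $C\varepsilon^{-|\alpha|}R^{d-|\alpha|+n/2}$, whereas the target $C R^d(\varepsilon R)^{n/2-|\alpha|}$ is smaller by exactly the factor $\varepsilon^{n/2}$. That extra $\varepsilon^{n/2}$ is what produces the exponent $\sigma+n/2-|\alpha|$ in \eqref{Hsigma-eq} and is the entire nontrivial content of the second inequality. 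Your argument attributes the $(\varepsilon R)^{n/2}$ to ``the support volume'', but the set $|\xi+\eta|\le\varepsilon|\eta|$ of volume $O((\varepsilon R)^n)$ is the support in $\xi$ of the partial Fourier transform of $a_{\chi,\varepsilon}(\cdot,\eta)$ for each \emph{fixed} $\eta$; it does not constrain the $\eta$-integral, which ranges over the whole corona. A Cauchy--Schwarz/Plancherel step that exploits that $\xi$-support controls the inverse Fourier transform in $x$ and, if anything, goes the wrong way (it trades an $L_\infty$ bound for an $L_2$ bound in $\xi$ at the price of a volume factor). One can see in the Ching example \eqref{ching-eq}, where the partially transformed symbol is a single Dirac in $\xi$, that the factor $(\varepsilon R)^{n/2}$ there arises because $\eta$ \emph{itself} is confined to a ball of radius $\varepsilon|\eta|$ --- and that confinement has to be recaptured by a genuine argument for general $a\in S^d_{1,1}$. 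This is exactly what \cite[Lem.~9.3.2]{H97} supplies and what your sketch does not reproduce.
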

The last remark on continuity has been inserted here for later reference. 
It is easily verified by observing in the proof of \cite[Lem.~9.3.2]{H97}
(to which the reader is referred) that the constant $C_{\alpha,\beta}(a)$ is
a continuous seminorm in $S^d_{1,1}$.

In case $\sigma>0$ there is a faster convergence to $0$ in \eqref{Hsigma-eq}.
In \cite{H89} this was proved to imply that $a(x,D)$ is bounded as a densely defined map 
\begin{equation}
  H^{s+d}(\Rn)\to H^s(\Rn) \quad\text{for}\quad s>-\sigma.
  \label{Hssigma-eq}
\end{equation}
The reader may consult \cite[Thm.~9.3.5]{H97} for this (whilst
\cite[Thm.~9.3.7]{H97} gives four pages of proof of necessity of 
$s\ge -\sup\sigma$, with supremum over all $\sigma$ satisfying \eqref{Hsigma-eq}).

Consequently, if $\hat a(\xi,\eta)$ is so small along $\cal T$ that \eqref{Hsigma-eq} holds for all $\sigma\in \R$, 
there is boundedness $H^{s+d}\to H^s$ for all $s\in \R$. E.g.\ this is the case when
\eqref{tdc-cnd} holds, for since 
\begin{equation}
  \supp \hat a_{\chi,\varepsilon}\subset 
  \{\,(\xi,\eta)\mid 1+|\xi+\eta|\le 2\varepsilon|\eta| \,\},
\end{equation}
clearly $a_{\chi,\varepsilon}\equiv 0$ for $2\varepsilon<1/B$ then.

\begin{exmp}   \label{Heps-exmp}
For the present paper it is useful to exploit Ching's symbol 
\eqref{ching-eq} to show the existence of symbols
fulfilling \eqref{Hsigma-eq} for a given $\sigma$, at least for $\sigma\in \N$.
To do so one may fix $|\theta|=1$ and take 
some $A(\eta)$ in $C^\infty_0(\{\,\eta\mid \tfrac{3}{4}<|\eta|<\tfrac{5}{4} \,\})$
with a zero of order $\sigma$ at $\theta$, so that Taylor's formula gives
$|A(\eta)|\le c|\eta-\theta|^{\sigma}$ in a neighbourhood of $\theta$:

Indeed, as $\hat a(x,\eta)=(2\pi)^n\sum_{j=0}^\infty 2^{jd}\delta(\xi+2^j\theta)
A(2^{-j}\eta)$, clearly 
\begin{equation}
  a_{\theta,\chi,\varepsilon}(x,\eta)= \sum_{j=0}^\infty 2^{jd} 
  e^{-\im x\cdot 2^j\theta}\chi(\eta-2^j\theta,\varepsilon\eta)
  A(2^{-j}\eta).
\end{equation}
Because $[R,2R]$ is contained in 
$[\tfrac{3}{4}2^{j-1},\tfrac{3}{2}2^{j-1}]\cup [\tfrac{3}{4}2^j,\tfrac{3}{2}2^{j}]$ for some
$j\in\Z$, it suffices to estimate the integral in \eqref{Hsigma-eq} 
only for $R=3\cdot 2^{j-2}$ with $j\ge 1$. Then it involves only the $j$th term, i.e.\ 
\begin{equation}
  \int_{R\le |\eta|\le 2R}|a_{\theta,\chi,\varepsilon}(x,\eta)|^2\,d\eta
=\int_{R\le |\eta|\le 2R} R^{2d} |A(\eta/R)|^2
  |\chi(\eta-R\theta,\varepsilon\eta)|^2\,d\eta. 
  \label{R2j-eq}
\end{equation}
By the choice of $\chi$, the integrand is $0$ unless 
$|\eta-R\theta|\le \varepsilon |\eta|\le 2\varepsilon R$ and $1\le \varepsilon R$, 
so for small $\varepsilon$,
\begin{equation}
  \int_{R\le |\eta|\le 2R}|a_{\theta,\chi,\varepsilon}(x,\eta)|^2\,d\eta
\le \nrm{\chi}{\infty }^2 R^{n+2d}
\int_{|\zeta-\theta|\le 2\varepsilon}
  c|\zeta-\theta|^{2\sigma}\,d\zeta
\le c'\varepsilon^{2\sigma+n} R^{n+2d}.
\end{equation}
Applying $(RD_{\eta})^\alpha$ before integration, 
$(RD_{\eta})^\gamma$ may fall on $A(\eta/R)$, which
lowers the degree and yields (at most) $\varepsilon^{n/2+\sigma-|\gamma|}$. 
In the factor 
$(RD_{\eta})^{\alpha-\gamma}\chi(\eta-R\theta,\varepsilon\eta)$
the homogeneity of degree $-|\alpha-\gamma|$ applies for 
$\varepsilon R\ge2$ and yields a bound in terms of finite suprema over
$B(\theta,2)\times B(0,2)$, hence is $\cal O(1)$;
else $\varepsilon R<2$ so the factor is 
$\cal O(R^{|\alpha-\gamma|})=\cal O(\varepsilon^{|\gamma|-|\alpha|})$
when non-zero, as both entries are in norm less than $4$ then.
Altogether this verifies \eqref{Hsigma-eq}.

A lower bound of \eqref{R2j-eq} by $c\varepsilon^{2\sigma+n}
R^{n+2d}$ is similar (cf.\ \cite[Ex.~9.3.3]{H97} for $\sigma=0=d$) when 
$|A(\eta)|\ge c_0|\eta-\theta|^{\sigma}$, which is obtained by taking
$A$ as a localisation of $|\eta-\theta|^{\sigma}$ for even $\sigma$ 
(so $A\in C^\infty $). This implies that
\eqref{Hsigma-eq} does not hold for larger values of $\sigma$ for this $a_{\theta}(x,\eta)$.
\end{exmp}

\section{Pointwise Estimates}   \label{pe-sect}
A crucial technique in this paper will be to estimate 
$|a(x,D)u(x)|$ at an arbitrary point $x\in\Rn$.
Some of recent results on this by the author \cite{JJ11pe} are recalled here and
further elaborated in Section~\ref{cutoff-ssect} with an estimate of frequency
modulated operators.

\subsection{The Factorisation Inequality}   \label{revw-ssect}
First of all, by \cite[Thm.~4.1]{JJ11pe}, 
when $\supp\hat u\subset \Bbar(0,R)$, the action on $u$ by
the operator $a(x,D)$ can be \emph{separated} from $u$ at the cost of an estimate,
which is the \emph{factorisation inequality}
\begin{equation}
  |a(x,D)u(x)|\le F_a(N,R;x) u^*(N,R;x).
  \label{Fau*-eq}
\end{equation} 
Hereby $u^*(x)=u^*(N,R;x)$ denotes the maximal function of Peetre--Fefferman--Stein type,
\begin{equation}
  u^*(N,R;x)=\sup_{y\in \Rn}\frac{|u(x-y)|}{(1+R|y|)^N}
  =\sup_{y\in \Rn}\frac{|u(y)|}{(1+R|x-y|)^N} .
  \label{u*-eq}
\end{equation}
The parameter $N$ is often chosen to satisfy $N\ge \order \hat u$.

The $a$-factor $F_a$, also called the symbol factor, only depends on $u$ in
a vague way, i.e.\ only through the $N$
and $R$ in \eqref{u*-eq}. It is related to the distribution kernel of $a(x,D)$ as
\begin{equation}
  F_a(N,R;x)= \int_{\Rn} (1+R|y|)^N 
  |\cal F^{-1}_{\eta\to y}(a(x,\eta)\chi(\eta ))|\,dy,
  \label{Fa-id}
\end{equation}
where $\chi\in C^\infty_0(\Rn)$ should equal $1$ in a neighbourhood of 
$\supp\hat u$, or of $\bigcup_{x}\supp a(x,\cdot )\hat u(\cdot)$.

In \eqref{Fau*-eq} both factors are easily controlled. 
For one thing the non-linear map $u\mapsto u^*$ has long been known to have bounds with respect to
the $L_p$-norm; cf.\ \cite[Thm.~2.6]{JJ11pe} for an elementary proof.
But in the present paper it is more important that $u^*(x)$ is polynomially bounded thus:
$|u(y)|\le c(1+|y|)^N\le c(1+R|y-x|)^N(1+|x|)^N$ holds according
to the Paley--Wiener--Schwartz Theorem if $N\ge \order \hat u$ and $R\ge 1$,
which by \eqref{u*-eq} implies
\begin{equation}
  u^*(N,R;x)\le c (1+|x|)^N,\qquad x\in \Rn.
  \label{u*PWS-eq}
\end{equation}
Here it is first recalled that every $u\in \cal S'$ has finite order since, for $\psi\in \cal S$,
\begin{align}
  |\dual{u}{\psi}|&\le c p_N(\psi),
  \label{upN-ineq}
\\
  p_N(\psi)&=\sup\{\,(1+|x|)^N|D^\alpha u(x)|\mid x\in \Rn,\ |\alpha|\le N
\,\}.
  \label{pN-eq}
\end{align}
Indeed, since $(1+|x|)^N$ is finite on $\supp\psi$ for $\psi\in C^\infty_0$,
$u$ is of order $N$. To avoid a discussion of the converse, it will throughout 
be convenient to call the least integer $N$ fulfilling \eqref{upN-ineq} 
the \emph{temperate} order of $u$, written $N=\order_{\cal S'}(u)$.

Returning to \eqref{u*PWS-eq}, when the compact spectrum of $u$ results from Fourier
multiplication, then the below $\cal O(2^{kN})$-information 
on the constant will be used repeatedly in the present paper.

\begin{lem}   \label{u*-lem} 
Let $u\in \cal S'(\Rn)$ be arbitrary and $N\ge \order_{\cal S'}( \hat u)$.
When $\psi\in C^\infty_0(\Rn)$ has support in $\Bbar(0,R)$, then 
$w=\psi(2^{-k}D)u$ fulfils
\begin{equation} \label{w*PWS-eq}
  w^*(N,R2^k;x)\le C 2^{kN}(1+|x|)^N,
\qquad k\in \N_0,
\end{equation}
for a constant $C$ independent of $k$. 
\end{lem}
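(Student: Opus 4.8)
The plan is to combine the polynomial bound \eqref{u*PWS-eq} with the effect of the Fourier multiplier $\psi(2^{-k}D)$ on both the order and the spectral radius. First I would note that $w=\psi(2^{-k}D)u$ has $\supp\hat w\subset\Bbar(0,R2^k)$, since $\hat w(\eta)=\psi(2^{-k}\eta)\hat u(\eta)$ and $\psi$ is supported in $\Bbar(0,R)$; so the maximal function $w^*(N,R2^k;x)$ is the natural one to consider, with the radius parameter matching the actual spectrum. By the Paley--Wiener--Schwartz Theorem, $w$ is then a smooth function of at most polynomial growth, and the task reduces to controlling $|w(y)|$ by $c\,2^{kN}(1+|y|)^N$ with a $k$-independent $c$, after which \eqref{u*-eq} gives \eqref{w*PWS-eq} exactly as \eqref{u*PWS-eq} was derived from its pointwise bound.

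The key step is therefore the pointwise estimate $|w(y)|\le c\,2^{kN}(1+|y|)^N$. I would write $w(y)=\dual{\hat u}{(2\pi)^{-n}\psi(2^{-k}\cdot)e^{\im y\cdot\,}}$ and apply \eqref{upN-ineq}--\eqref{pN-eq} with $N=\order_{\cal S'}(\hat u)$: thus $|w(y)|\le c\,p_N\bigl(\eta\mapsto\psi(2^{-k}\eta)e^{\im y\cdot\eta}\bigr)$. Now I estimate that seminorm. Leibniz' rule gives $D^\alpha_\eta\bigl(\psi(2^{-k}\eta)e^{\im y\cdot\eta}\bigr)=\sum_{\gamma\le\alpha}\binom{\alpha}{\gamma}2^{-k|\gamma|}(D^\gamma\psi)(2^{-k}\eta)\,y^{\alpha-\gamma}e^{\im y\cdot\eta}$, and since $\psi$ and its derivatives are bounded with support in $\Bbar(0,R)$ the factor $2^{-k|\gamma|}\le1$ for $k\ge0$, while $|y^{\alpha-\gamma}|\le(1+|y|)^{|\alpha|}\le(1+|y|)^N$ for $|\alpha|\le N$. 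The remaining weight is $(1+|\eta|)^N$ on the support $2^{-k}\eta\in\Bbar(0,R)$, i.e.\ $|\eta|\le R2^k$, which gives $(1+|\eta|)^N\le(1+R2^k)^N\le c\,2^{kN}$ with $c=(1+R)^N$ independent of $k$. Collecting these bounds yields $p_N(\dots)\le c\,2^{kN}(1+|y|)^N$, hence $|w(y)|\le C\,2^{kN}(1+|y|)^N$ with $C$ depending only on $\psi$, $R$, $N$ and the constant in \eqref{upN-ineq} for $\hat u$.

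Finally I substitute into the definition of $w^*$: for any $x$,
\begin{equation*}
  w^*(N,R2^k;x)=\sup_{y\in\Rn}\frac{|w(y)|}{(1+R2^k|x-y|)^N}
  \le C2^{kN}\sup_{y\in\Rn}\frac{(1+|y|)^N}{(1+R2^k|x-y|)^N}.
\end{equation*}
Using $1+|y|\le(1+|x-y|)(1+|x|)\le(1+R2^k|x-y|)(1+|x|)$, valid because $R2^k\ge1$, the supremum is bounded by $(1+|x|)^N$, which gives \eqref{w*PWS-eq} with the same constant $C$, manifestly independent of $k$. I do not expect a genuine obstacle here; the only point requiring a little care is keeping track that the $k$-dependence is confined to the clean factor $2^{kN}$ and does not leak into $C$, which is exactly what the observation $2^{-k|\gamma|}\le1$ and the splitting $(1+R2^k)^N\le(1+R)^N2^{kN}$ ensure.
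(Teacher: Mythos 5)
Your proof is correct and follows essentially the same route as the paper's: write $w(y)$ as the pairing of $\hat u$ against the Schwartz function $\psi(2^{-k}\cdot)e^{\im y\cdot\,}$, bound it via the order-$N$ seminorm $p_N$ using Leibniz's rule to obtain $|w(y)|\le C2^{kN}(1+|y|)^N$, and then convert to the maximal-function estimate via the inequality $1+|y|\le(1+|y-x|)(1+|x|)$. The only tiny difference is that you assume $R2^k\ge 1$ at the last step, whereas the paper's constant $C=c''\max(1,R^{-N})$ also covers $R<1$ for small $k$; this is immaterial in the applications, where $R\ge1$.
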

\begin{proof}
As $\psi(2^{-k}D)u(x)=\dual{\hat u}{\psi(2^{-k}\cdot ) e^{\im \dual{x}{\cdot }}(2\pi)^{-n}}$, 
continuity of $\hat u\colon \cal S\to\C$ yields
\begin{equation}
  |w(x)|\le c \sup\bigl\{\,(1+|\xi|)^N
    |D^\alpha_{\xi}(\psi(2^{-k}\xi)e^{\im \dual{x}{\xi}})| \bigm| 
    \xi\in \Rn,\ |\alpha|\le N\,\bigr\}.
\end{equation}
Since $(1+|\xi|)^N|D^\alpha\psi(2^{-k}\xi)|\le c'2^{k(N-|\alpha|)}$,
Leibniz' rule gives that $|w(x)|\le c''2^{kN}(1+|x|)^N$. 
Proceeding as before the lemma, the claim follows with 
$C=c''\max(1,R^{-N})$.
\end{proof}

Secondly, for the $a$-factor  in \eqref{Fa-id} one has $F_a\in C(\Rn)\cap L_\infty (\Rn)$ and an estimate
highly reminiscent of the Mihlin--H{\"o}rmander conditions for Fourier multipliers:

\begin{thm}
  \label{Fa-thm}
Assume $a(x,\eta)$ is in $S^d_{1,1}(\Rn\times\Rn)$ and let $F_a(N,R;x)$ be given by  
\eqref{Fa-id} for parameters $R,N>0$, with the auxiliary function
taken as $\chi=\psi(R^{-1}\cdot)$ for $\psi\in C^\infty_0(\Rn)$ equalling
$1$ in a set with non-empty interior. Then one has for all $x\in \Rn$ that
\begin{equation}
 0\le F_a(x) \le c_{n,N} \sum_{|\alpha|\le [N+\frac{n}{2}]+1} 
  \Big(\int_{R\supp \psi} |R^{|\alpha|}D^{\alpha}_\eta a(x,\eta)|^2
    \,\frac{d\eta}{R^n}
  \Big)^{1/2}.
  \label{FaMH-eq}
\end{equation}
\end{thm}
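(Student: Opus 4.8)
The plan is to read \eqref{FaMH-eq} as a Mihlin--H\"ormander type estimate for the compactly supported function $g_x(\eta):=a(x,\eta)\chi(\eta)=a(x,\eta)\psi(\eta/R)$, whose inverse Fourier transform is, up to the conjugation by the involution $M$, the kernel entering \eqref{Fa-id}. The lower bound $0\le F_a(x)$ is immediate from \eqref{Fa-id}, and $F_a(x)<\infty$ for each fixed $x$ because $g_x\in C_0^\infty(\Rn)$ makes $\cal F^{-1}g_x$ a Schwartz function. So the real content is the upper bound, and the natural device is Cauchy--Schwarz against a polynomial weight.

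I would put $M=[N+\tfrac n2]+1$, so that $2(M-N)>n$, and split the weight in \eqref{Fa-id} as $(1+R|y|)^N=(1+R|y|)^{N-M}(1+R|y|)^{M}$. Cauchy--Schwarz in $L_2(\Rn,dy)$ then gives
\begin{equation}
  F_a(x)\le\Big(\int_{\Rn}(1+R|y|)^{2(N-M)}\,dy\Big)^{1/2}
  \Big(\int_{\Rn}(1+R|y|)^{2M}|\cal F^{-1}g_x(y)|^2\,dy\Big)^{1/2}.
\end{equation}
In the first factor the change of variables $z=Ry$ yields $R^{-n}$ times the finite constant $\int_{\Rn}(1+|z|)^{2(N-M)}\,dz$, which converges precisely because $2(M-N)>n$; thus the first factor equals $c_{n,N}\,R^{-n/2}$.

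For the second factor I would estimate $(1+R|y|)^{2M}\le c_{M}\sum_{|\alpha|\le M}R^{2|\alpha|}y^{2\alpha}$, use $y^{\alpha}\cal F^{-1}g_x(y)=(-1)^{|\alpha|}\cal F^{-1}(D_\eta^\alpha g_x)(y)$, and invoke Parseval's identity $\|\cal F^{-1}v\|_{2}^2=(2\pi)^{-n}\|v\|_2^2$; this bounds the second factor by a constant times $\big(\sum_{|\alpha|\le M}\int_{\Rn}|R^{|\alpha|}D_\eta^\alpha g_x(\eta)|^2\,d\eta\big)^{1/2}$. It then remains to pass from $g_x$ to $a$: Leibniz' rule gives $R^{|\alpha|}D_\eta^\alpha g_x=\sum_{\beta\le\alpha}\binom{\alpha}{\beta}\bigl(R^{|\beta|}D_\eta^\beta a(x,\eta)\bigr)(D^{\alpha-\beta}\psi)(\eta/R)$, where each $(D^{\alpha-\beta}\psi)(\eta/R)$ is bounded by a $\psi$-dependent constant and is supported in $\{\,\eta\mid \eta/R\in\supp\psi\,\}=R\supp\psi$; hence $\int_{\Rn}|R^{|\alpha|}D_\eta^\alpha g_x|^2\,d\eta$ is dominated by a constant times $\sum_{|\beta|\le|\alpha|}\int_{R\supp\psi}|R^{|\beta|}D_\eta^\beta a(x,\eta)|^2\,d\eta$.

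Putting the three estimates together, the factor $R^{-n/2}$ from the first one converts each $\int_{R\supp\psi}(\cdots)\,d\eta$ into $\int_{R\supp\psi}(\cdots)\,d\eta/R^n$; finally $\sqrt{\sum_j t_j}\le\sum_j\sqrt{t_j}$ distributes the square root over the finite sum, and after discarding the repetitions in the double sum over $\beta\le\alpha$, $|\alpha|\le M$, one is left with exactly \eqref{FaMH-eq}, with $c_{n,N}$ depending only on $n$, $N$ and finitely many derivatives of $\psi$. The asserted $F_a\in C(\Rn)\cap L_\infty(\Rn)$ is then a by-product: the symbol inequalities \eqref{pab-eq} bound the right-hand side of \eqref{FaMH-eq} uniformly in $x$, and continuity of $x\mapsto F_a(x)$ follows from continuity of $x\mapsto a(x,\cdot)\psi(\cdot/R)$ into $\cal S(\Rn)$ together with dominated convergence in \eqref{Fa-id}. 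I expect the only place needing genuine care to be the bookkeeping of the powers of $R$ through the scaling and Parseval steps; there is no substantial difficulty.
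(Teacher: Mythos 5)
Your argument is correct, and it is the standard Mihlin--H\"ormander-style estimate: split the weight, Cauchy--Schwarz, Parseval, Leibniz, and rescale. The present paper does not reproduce the proof (it defers to \cite{JJ11pe}, Thm.~4.1 and Sect.~6), but your route is precisely the elementary one alluded to there, and the power-counting in $R$ comes out right: the $R^{-n/2}$ from the weight integral supplies the $d\eta/R^n$, the choice $M=[N+\tfrac n2]+1$ makes $2(M-N)>n$ so the weight integral converges, and the Leibniz step only redistributes terms within $|\beta|\le M$ and leaves the support on $R\supp\psi$.
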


For the elementary proof the reader may consult \cite{JJ11pe}; cf.\ Theorem~4.1 and Section~6 there.
A further analysis of how $F_a$ depends on $a(x,\eta)$ and $R$ is a special case of \cite[Cor.~4.3]{JJ11pe}: 

\begin{cor}
  \label{Fa-cor} 
Assume $a\in S^{d}_{1,1}(\Rn\times\Rn)$ and let $N$, $R$ and $\psi$ be
as in Theorem~\ref{Fa-thm}.
When $R\ge 1$ there is a seminorm $p$ on $S^d_{1,1}$ and a constant
$c>0$, that depends only on $n$, $N$ and $\psi$, such that
\begin{equation}
 0\le F_a(x) \le c_1 p(a) R^{\max(d,[N+n/2]+1)} \quad\text{for all}\quad x\in
\Rn. 
  \label{Rmax-eq}
\end{equation}
If $\psi(\eta)\ne0$ only holds in a corona $0<\theta_0 \le|\eta|\le \Theta_0$,
and $\psi(\eta)=1$ holds for $\theta_1\le |\eta|\le \Theta_1$, then
\begin{equation}
  0\le F_a(x)\le c_0 p(a)R^{d} \quad\text{for all}\quad x\in \Rn,
  \label{Rd-eq}
\end{equation}
whereby $c_0=c_1\max(1,\theta_0^{d-N-[n/2]-1},\theta_0^d)$.
\end{cor}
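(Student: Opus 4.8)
The plan is to derive both estimates directly from the Mihlin--H\"ormander type bound of Theorem~\ref{Fa-thm}, feeding in the defining symbol estimates $|D^\alpha_\eta a(x,\eta)|\le p_{\alpha,0}(a)(1+|\eta|)^{d-|\alpha|}$ from \eqref{pab-eq} and using that in \eqref{FaMH-eq} the variable $\eta$ runs only through the dilated support $R\supp\psi$.

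First I would treat \eqref{Rmax-eq}. Since $\supp\psi$ is compact, say $\supp\psi\subset\Bbar(0,\Theta_0)$, every $\eta\in R\supp\psi$ obeys $|\eta|\le R\Theta_0$, hence $1+|\eta|\le R(1+\Theta_0)$ as $R\ge1$. I would bound the weighted derivative $R^{|\alpha|}(1+|\eta|)^{d-|\alpha|}$ by splitting on the sign of $d-|\alpha|$: for $d\ge|\alpha|$ one gets $R^{|\alpha|}\big(R(1+\Theta_0)\big)^{d-|\alpha|}=R^{d}(1+\Theta_0)^{d-|\alpha|}$, while for $d<|\alpha|$ one simply uses $(1+|\eta|)^{d-|\alpha|}\le1$ to get $R^{|\alpha|}\le R^{[N+n/2]+1}$, since $|\alpha|\le[N+n/2]+1$ and $R\ge1$. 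In both cases this is at most $C(n,N,\psi)\,R^{\max(d,\,[N+n/2]+1)}$. As moreover $\int_{R\supp\psi}\tfrac{d\eta}{R^{n}}=|\supp\psi|$ is a fixed constant, each summand in \eqref{FaMH-eq} is $\le C\,p_{\alpha,0}(a)\,R^{\max(d,\,[N+n/2]+1)}$; summing over $|\alpha|\le[N+n/2]+1$ yields \eqref{Rmax-eq} with $p(a)=\sum_{|\alpha|\le[N+n/2]+1}p_{\alpha,0}(a)$ and $c_1$ collecting $c_{n,N}$, $|\supp\psi|$ and the powers of $1+\Theta_0$; none of these depend on $x$ or $R$.

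For the corona refinement \eqref{Rd-eq} the gain comes from the extra lower bound: now $\eta\in R\supp\psi$ forces $R\theta_0\le|\eta|\le R\Theta_0$. Rerunning the case split, for $d\ge|\alpha|$ one again obtains $R^{d}$ times a power of $1+\Theta_0$; for $d<|\alpha|$ one no longer discards $(1+|\eta|)^{d-|\alpha|}$ but estimates $(1+|\eta|)^{d-|\alpha|}\le|\eta|^{d-|\alpha|}\le(R\theta_0)^{d-|\alpha|}$, so that $R^{|\alpha|}(1+|\eta|)^{d-|\alpha|}\le R^{d}\theta_0^{d-|\alpha|}$. Since $|\alpha|$ runs only up to $[N+n/2]+1$, the exponent $d-|\alpha|$ stays in a bounded interval and $\theta_0^{d-|\alpha|}\le\max\!\big(1,\theta_0^{d-N-[n/2]-1},\theta_0^{d}\big)$; absorbing the $\Theta_0$-dependent constants into the earlier $c_1$ then gives \eqref{Rd-eq} with $c_0=c_1\max\!\big(1,\theta_0^{d-N-[n/2]-1},\theta_0^{d}\big)$.

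The calculation is essentially bookkeeping, and I do not anticipate a genuine obstacle; the one point deserving care is the case distinction on the sign of $d-|\alpha|$, which is exactly what produces the exponent $\max(d,[N+n/2]+1)$ in \eqref{Rmax-eq} and, in the corona case, converts the otherwise uncontrolled positive powers of $R$ into powers of $\theta_0$ by means of the lower frequency cut-off. It also has to be checked that the resulting $p$ and $c_1$ depend only on $n$, $N$, $\psi$ (and $d$) and not on $R$ or $x$, which is transparent since all $R$- and $x$-dependence was extracted explicitly.
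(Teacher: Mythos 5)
Your argument is correct and is the natural route: plug the symbol estimates from \eqref{pab-eq} into the Mihlin--H\"ormander bound \eqref{FaMH-eq}, use $R\supp\psi\subset\{|\eta|\le R\Theta_0\}$ (resp.\ the corona $R\theta_0\le|\eta|\le R\Theta_0$) to control $R^{|\alpha|}(1+|\eta|)^{d-|\alpha|}$ by a case split on the sign of $d-|\alpha|$, and note $\int_{R\supp\psi}d\eta/R^n=|\supp\psi|$. This is precisely how the cited reference~\cite{JJ11pe} proceeds, and the bookkeeping with $\theta_0$ in the corona case reproduces the stated $c_0$.
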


The above asymptotics is $\cal O(R^d)$ for $R\to\infty $  if $d$ is large.
This can be improved when $a(x,\eta)$ is modified by removing the low
frequencies in the $x$-variable (cf.\ the $a^{(3)}$-term in Section~\ref{corona-sect} below).
In fact, with a second spectral quantity $Q>0$, 
the following is contained in \cite[Cor.~4.4]{JJ11pe}:

\begin{cor}
  \label{Fa0-cor}
When $a_{Q}(x,\eta)=\varphi(Q^{-1}D_x)a(x,\eta)$ for some $a\in
S^d_{1,1}(\Rn\times \Rn)$ and $\varphi\in C^\infty_0(\Rn)$ with $\varphi=0$
in a neighbourhood of $\xi=0$, then there is a seminorm $p$ on $S^d_{1,1}$
and constants $c_M$, depending only on $M$, $n$, $N$, $\psi$ and $\varphi$,
such that 
\begin{equation}
  0\le F_{a_{Q}}(N,R;x)\le c_M p(a)Q^{-M} R^{\max(d+M,[N+n/2]+1)}
\quad\text{for}\quad M,Q,R>0.
\end{equation}
Here $d+M$ can replace $\max(d+M,[N+n/2]+1)$ when the auxiliary function $\psi$ in
$F_{a_Q}$ fulfils the corona condition in Corollary~\ref{Fa-cor}.
\end{cor}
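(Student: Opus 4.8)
The plan is to reduce the statement for $a_Q(x,\eta)=\varphi(Q^{-1}D_x)a(x,\eta)$ to the already-proven Corollary~\ref{Fa-cor} for ordinary symbols, by gaining a factor $Q^{-M}$ from the fact that $\varphi$ vanishes near $\xi=0$. First I would observe that $F_{a_Q}(N,R;x)$ is, by \eqref{Fa-id}, the weighted $L_1$-norm of $\cal F^{-1}_{\eta\to y}(a_Q(x,\eta)\chi(\eta))$ with weight $(1+R|y|)^N$, and that $a_Q(x,\eta)$ is obtained from $a(x,\eta)$ by convolving in $x$ with $Q^n\cal F^{-1}\varphi(Q\cdot)$; the key point is that this Fourier multiplier in $x$ is a symbol in $S^{-\infty}_{1,0}$ whose seminorms decay like $Q^{-M}$ for every $M$, precisely because $\varphi\equiv 0$ near the origin, so $\varphi(Q^{-1}\xi)=|\xi|^{2M}\cdot|\xi|^{-2M}\varphi(Q^{-1}\xi)$ and the last factor gains $Q^{-2M}$ while $|\xi|^{2M}$ may be absorbed by $(-\lap_x)^M$ hitting $a$.

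The concrete mechanism I would use: write $\varphi(Q^{-1}\xi)=Q^{-2M}\psi_M(Q^{-1}\xi)|\xi|^{2M}$ where $\psi_M(\zeta)=|\zeta|^{-2M}\varphi(\zeta)\in C^\infty_0$ is again supported in the same annulus as $\varphi$. Then
\begin{equation}
  a_Q(x,\eta)=Q^{-2M}\psi_M(Q^{-1}D_x)\bigl((-\lap_x)^M a(x,\eta)\bigr),
\end{equation}
and since $(-\lap_x)^M\colon S^d_{1,1}\to S^{d+2M}_{1,1}$ continuously (a special case of the mapping property $(\eta+D_x)^\alpha\colon S^d_{1,1}\to S^{d+|\alpha|}_{1,1}$ noted in the proof of Corollary~\ref{aOO-cor}), the symbol $b:=(-\lap_x)^M a$ lies in $S^{d+2M}_{1,1}$ with $p(b)\le p'(a)$ for suitable seminorms. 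Now I would apply Theorem~\ref{Fa-thm}/Corollary~\ref{Fa-cor} not to $a_Q$ directly but handle the extra multiplier $\psi_M(Q^{-1}D_x)$ by noting that it contributes only a bounded operator on the $x$-variable uniformly in $Q$: more precisely, using the kernel formula \eqref{Fa-id} one sees $F_{a_Q}(N,R;x)\le Q^{-2M}\sup_x F_{b}(N,R;x)$ up to an $x$-independent constant absorbing $\nrm{\cal F^{-1}\psi_M}{1}$, because convolution in $x$ by an $L_1$-function of total mass $\le\nrm{\cal F^{-1}\psi_M}{1}$ (independent of $Q$) cannot increase the supremum over $x$ of the weighted kernel norm. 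Applying Corollary~\ref{Fa-cor} to $b\in S^{d+2M}_{1,1}$ then gives $F_{a_Q}(x)\le c\,Q^{-2M}p(a)R^{\max(d+2M,\,[N+n/2]+1)}$; renaming $2M$ as $M$ (or carrying $M$ through with the half-integer bookkeeping) yields the asserted bound, and the corona-condition variant follows verbatim from the corona case \eqref{Rd-eq} of Corollary~\ref{Fa-cor}, which replaces $\max(d+M,\dots)$ by $d+M$.

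The main obstacle I anticipate is making rigorous the claim that precomposition with the $x$-multiplier $\psi_M(Q^{-1}D_x)$ does not worsen the estimate uniformly in $Q$: one must be careful that $F_{a_Q}$ involves $a_Q(x,\eta)$ evaluated at the \emph{same} $x$ in both the kernel variable and the frequency-modulation variable, so the interchange of the $x$-convolution with the $\eta$-Fourier transform and the weighted integration has to be justified — but this is exactly the kind of commuting that is harmless here because $\psi_M(Q^{-1}D_x)$ acts only on the $x$-dependence of the symbol while $\cal F^{-1}_{\eta\to y}$ and the weight $(1+R|y|)^N$ act only on $\eta$, so Fubini and Minkowski's integral inequality apply directly once $a\in S^{d+2M}_{1,1}$ guarantees the requisite integrability. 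Everything else is routine tracking of seminorm dependence through the already-cited corollaries, so the write-up should be short; indeed the paper defers the details to \cite[Cor.~4.4]{JJ11pe}, and I would do the same while recording the reduction above.
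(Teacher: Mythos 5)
Here the paper offers no argument at all, only a citation to \cite[Cor.~4.4]{JJ11pe}, so there is no in-paper proof to compare with and your reduction must stand on its own. It does. The factorisation $\varphi(Q^{-1}\xi)=Q^{-2M}|\xi|^{2M}\psi_M(Q^{-1}\xi)$ with $\psi_M=|\cdot|^{-2M}\varphi\in C^\infty_0(\Rn)$ (which uses exactly that $\varphi$ vanishes near $0$), the identity $a_Q=Q^{-2M}\psi_M(Q^{-1}D_x)\bigl((-\lap_x)^M a\bigr)$, the mapping $(-\lap_x)^M\colon S^d_{1,1}\to S^{d+2M}_{1,1}$, and the Tonelli--Minkowski estimate
\begin{equation*}
  \sup_x F_{a_Q}(N,R;x)\le Q^{-2M}\nrm{\cal F^{-1}\psi_M}{1}\,\sup_x F_{(-\lap_x)^M a}(N,R;x)
\end{equation*}
are all correct, and applying Corollary~\ref{Fa-cor} (with the corona case \eqref{Rd-eq} for the last assertion) closes the estimate. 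The point you flag about passing from $2M$ to general $M>0$ is genuine but only bookkeeping: one can use $|\xi|^M$ in place of $|\xi|^{2M}$ (still $C^\infty$ on $\supp\varphi$, so $\psi_M\in C^\infty_0$), or take $2\lceil M/2\rceil$ and absorb the overshoot into $c_M$.

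One thing deserves emphasis, because Remark~\ref{Fa-rem} invites a reading of the statement your proof rightly does not support: your seminorm $p'$ bounds $p((-\lap_x)^M a)$ and hence involves $D^\beta_x$-derivatives of $a$ up to $|\beta|=2M$, so it genuinely depends on $M$. This is not a defect of your argument but is unavoidable. For $a(x,\eta)=e^{\im\lambda x_1}\rho(x)$ with $\rho\in\cal S$ (constant in $\eta$) one has $D^\alpha_\eta a=0$ for $\alpha\ne0$, so $\sum_{|\alpha|\le[N+n/2]+1}p_{\alpha,0}(a)=\nrm{\rho}{\infty}$ is independent of $\lambda$; yet $F_{a_Q}(N,R;x)=C_\psi(N)|a_Q(x)|$ with $C_\psi(N)=\int(1+|z|)^N|\check\psi(z)|\,dz$, and choosing $Q$ proportional to $\lambda$ keeps $|a_Q(0)|$ bounded away from $0$ as $\lambda\to\infty$. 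Hence no single $\beta=0$ seminorm can dominate $Q^{M}F_{a_Q}$ for all $M>0$, and the Corollary's ``there is a seminorm $p$'' has to be read as ``for each $M$ there is a seminorm $p_M$'', which is precisely what your reduction produces.
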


\begin{rem}
  \label{Fa-rem}
By the proofs in \cite{JJ11pe}, the seminorms in
Corollaries~\ref{Fa-cor} and \ref{Fa0-cor} may be chosen
in the same way for all $d$, namely $p(a)=\sum_{|\alpha|\le [N+n/2]+1}
p_{\alpha,0}(a)$; cf.\ \eqref{pab-eq}. 
\end{rem}

\subsection{Estimates of Frequency Modulated Operators}
  \label{cutoff-ssect}
The results in the previous section easily give the following, 
which later in Sections~\ref{corona-sect} and \ref{split-sect} will be 
used repeatedly.

\begin{prop}   
  \label{cutoff-prop}
For $a(x,\eta)$ in $S^d_{1,1}(\Rn\times \Rn)$, $v\in \cal S'(\Rn)$ and arbitrary $\Phi$, $\Psi\in
C^\infty_0(\Rn)$, for which $\Psi$ is constant in a neighbourhood of the
origin and with its support in $\Bbar(0,R)$ for some $R\ge 1$, 
there is for $k\in \N_0$ and $N\ge \order_{\cal S'}( \cal Fv)$, cf.\ \eqref{pN-eq}, a polynomial bound
\begin{equation}
  \big|\OP\big(\Phi(2^{-k}D_x)a(x,\eta)\Psi(2^{-k}\eta)\big)v(x)\big|
  \le  C(k)(1+|x|)^N,
  \end{equation}
whereby
  \begin{equation}
 C(k)=
  \begin{cases}
    c 2^{k(N+d)_+}&\ \text{ for } N+d\ne0,
\\ 
     c k &\ \text{ for } N+d=0.
  \end{cases}
\end{equation}
For $0\notin\supp\Psi$ this may be sharpened to $C(k)=c 2^{k(N+d)}$
for all values of $N+d$.
\end{prop}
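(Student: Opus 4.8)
\emph{Sketch of proof.} Write $c_k(x,\eta)=\Phi(2^{-k}D_x)a(x,\eta)\Psi(2^{-k}\eta)$, so that the operator in the statement is $\OP(c_k)v(x)$; as $\Psi(2^{-k}\cdot)$ has compact support, $c_k\in S^{-\infty}$. The plan is to split $v$ into Littlewood--Paley blocks, apply the factorisation inequality \eqref{Fau*-eq} to each block with an auxiliary function adapted to its spectrum, and sum. The key preliminary is that $(c_k)_{k\in\N_0}$ is a \emph{bounded} family in $S^d_{1,1}(\Rn\times\Rn)$ --- this is where the hypothesis that $\Psi$ be constant near $0$ is used. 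Indeed $\Phi(2^{-k}D_x)$ is convolution in $x$ by $2^{kn}\cal F^{-1}\Phi(2^k\cdot)$, which has fixed $L_1$-norm, so $p_{\alpha,\beta}(\Phi(2^{-k}D_x)a)\le\nrm{\cal F^{-1}\Phi}{1}\,p_{\alpha,\beta}(a)$; and for $|\delta|\ge1$ the function $D^\delta\Psi$ is supported in a fixed corona $\{\rho\le|\xi|\le R\}$, so on the support of $(D^\delta\Psi)(2^{-k}\cdot)$ one has $2^{-k|\delta|}\le\rho^{-|\delta|}(1+|\eta|)^{-|\delta|}$, whence multiplication by $\Psi(2^{-k}\cdot)$ stays in a bounded subset of $S^0_{1,0}$. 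By the Leibniz rule the $S^d_{1,1}$-seminorms of $c_k$ are bounded uniformly in $k$; in particular $\sup_k p(c_k)<\infty$ for the seminorm $p$ of Remark~\ref{Fa-rem}.

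Now fix $\Phi_0\in C^\infty_0$ with $\Phi_0\equiv1$ on $\Bbar(0,1)$ and $\supp\Phi_0\subset\Bbar(0,2)$, put $\psi_*=\Phi_0-\Phi_0(2\cdot)$, $v_0=\Phi_0(D)v$, and $v_j=\psi_*(2^{-j}D)v$ for $j\ge1$, so that $\supp\cal Fv_j\subset\Bbar(0,2^{j+1})$ and for $j\ge1$ the set $\supp\cal Fv_j$ lies in the corona $\{2^{j-1}\le|\eta|\le2^{j+1}\}$. Since $\sum_{j\le J}v_j=\Phi_0(2^{-J}D)v$ by telescoping, choosing $J=J(k)$ with $2^{J(k)}\ge R\,2^k$ makes $v-\sum_{j\le J(k)}v_j$ have spectrum disjoint from $\supp\Psi(2^{-k}\cdot)$, so that $\OP(c_k)v=\sum_{j=0}^{J(k)}\OP(c_k)v_j$ with $J(k)=k+\cal O(1)$. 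For each block, \eqref{Fau*-eq} gives $|\OP(c_k)v_j(x)|\le F_{c_k}(N,2^{j+1};x)\,v_j^*(N,2^{j+1};x)$; here Lemma~\ref{u*-lem} (with modulation function $\Phi_0$, resp.\ $\psi_*$) yields $v_j^*(N,2^{j+1};x)\le C2^{jN}(1+|x|)^N$, and since for $j\ge1$ the auxiliary function in $F_{c_k}$ may be taken of corona type adapted to scale $2^{j}$ and equal to $1$ on $\supp\cal Fv_j$, Corollary~\ref{Fa-cor} (estimate \eqref{Rd-eq}) combined with $\sup_kp(c_k)<\infty$ gives $F_{c_k}(N,2^{j+1};x)\le c\,2^{jd}$ uniformly in $k,j$; the block $j=0$ contributes only $\cal O((1+|x|)^N)$ by \eqref{Rmax-eq}. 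Thus $|\OP(c_k)v_j(x)|\le c\,2^{j(N+d)}(1+|x|)^N$ for $0\le j\le J(k)$.

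Summing the geometric series $\sum_{j=0}^{J(k)}2^{j(N+d)}$ with $J(k)=k+\cal O(1)$ now gives the claim: it is $\cal O(2^{k(N+d)})$ if $N+d>0$, $\cal O(1)$ if $N+d<0$, and equals $J(k)+1=\cal O(k)$ if $N+d=0$ (a sum of $k+\cal O(1)$ ones), i.e.\ exactly $C(k)$. Finally, if $0\notin\supp\Psi$ then $\Psi(2^{-k}\cdot)$ is supported in a corona $\{\theta_*2^k\le|\eta|\le R\,2^k\}$, so only the $\cal O(1)$ blocks with $j=k+\cal O(1)$ survive; the sum then has boundedly many terms, each $\cal O(2^{k(N+d)}(1+|x|)^N)$, so $C(k)=c\,2^{k(N+d)}$ for every value of $N+d$. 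The delicate points are the uniform-in-$k$ control of the symbols $c_k$ and the bookkeeping of which dyadic blocks of $v$ are annihilated by the cut-off $\Psi(2^{-k}\cdot)$; everything else is a routine application of Section~\ref{pe-sect}.
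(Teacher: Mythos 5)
Your proof is correct and reaches the same bound with the same main tools (the factorisation inequality \eqref{Fau*-eq}, Lemma~\ref{u*-lem}, Corollary~\ref{Fa-cor}, and a geometric series over roughly $k$ dyadic blocks), but it organises the decomposition differently from the paper. The paper keeps $\Psi(2^{-k}D)$ as a filter on $v$ and telescopes with $\Psi$ itself, writing $v^k=v^0+\sum_{j=1}^k(\Psi(2^{-j}D)-\Psi(2^{1-j}D))v$, then applies the factorisation inequality to $a^k(x,D)$ acting on each piece; it never needs to estimate $\Psi(2^{-k}\cdot)$ as a symbol, only the convolution bound $p(a^k)\le\nrm{\cal F^{-1}\Phi}{1}p(a)$, and it invokes the hypothesis that $\Psi$ be constant near $0$ solely to make $\Psi-\Psi(2\cdot)$ vanish near the origin. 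You instead absorb $\Psi(2^{-k}\eta)$ into a single symbol $c_k$, prove $(c_k)_k$ is bounded in $S^d_{1,1}$ (a point not needed in the paper, and the place where you use the ``constant near $0$'' hypothesis), and then decompose $v$ by a fixed, $\Psi$-independent Littlewood--Paley partition, retaining only the $j\lesssim k$ blocks that survive the multiplier $\Psi(2^{-k}\cdot)$. Both routes yield $\sum_{j\lesssim k}2^{j(N+d)}$; your version makes the uniform symbol control explicit and slightly cleaner, at the cost of an extra partition, while the paper's is marginally shorter because the telescoping with $\Psi$ already does the frequency bookkeeping.
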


\begin{proof}
In this proof it  is convenient to let
$a^k(x,\eta)=\Phi(2^{-k}D_x)a(x,\eta)$ and $v^k=\Psi(2^{-k}D)v$.
Then the factorisation inequality \eqref{Fau*-eq} gives
\begin{equation}
  |a^{k}(x,D)v^k(x)|\le 
  F_{a^{k}}(N,R2^k;x) \cdot  (v^k)^*(N,R2^k;x).
\end{equation}
Since $N\ge \order_{\cal S'}(\hat v)$, Lemma~\ref{u*-lem} gives
$(v^k)^*(N,R2^k;x)  \le C 2^{kN}(1+|x|)^N$, $x\in \Rn$.

In case $0\notin \supp\Psi$,  the auxiliary function $\chi=\psi(\cdot /(R2^k))$
used in $F_{a^{k}}$, cf.\ Theorem~\ref{Fa-thm}, can be so chosen that it fulfils the 
corona condition in Corollary~\ref{Fa-cor}; e.g.\ it is possible to have
$\Theta_1=1$ and $\theta_1=r/R$ when $\Psi\equiv0$ on $B(0,r)$.
Since Remark~\ref{Fa-rem} implies
$p(a^k)\le p(a)\int |\cal F^{-1}\Phi(y)|\,dy$,
\begin{equation}
 0\le F_{a^{k}}(N,R2^k;x)\le c_0\nrm{\cal F^{-1}\Phi}{1}p(a)R^d2^{kd}.
\end{equation} 
When combined with the above, this inequality yields the claim in case $0\notin \Psi$.  

In the general case one has $v^k=v_k+v_{k-1}+\dots +v_1+v^0$, whereby $v_j$ denotes the
difference $v^j-v^{j-1}=\Psi(2^{-j}D)v-\Psi(2^{-j+1}D)v$. Via \eqref{Fau*-eq} this gives the
starting point
\begin{equation}
  |a^k(x,D)v^k(x)|
  \le |a^k(x,D)v^0(x)|+\sum_{j=1}^{k} F_{a^k}(N,R2^j;x)v_j^*(N,R2^j;x).
  \label{tele-ineq}
\end{equation}
As $\tilde \Psi=\Psi-\Psi(2\cdot )$  
does not have $0$ in its support, the above shows
that for $j=1,\dots ,k$ one has
$F_{a^{k}}(N,R2^j;x)\le c_0\nrm{\cal F^{-1}\Phi}{1}p(a)R^d2^{jd}$.
Lemma~\ref{u*-lem} yields polynomial bounds of $v_j^*$, say with a
constant $C'$,
so the sum on the right-hand side of \eqref{tele-ineq} is estimated,
for $d+N\ne0$, by
\begin{equation}
  \sum_{j=1}^k c_0C' R^d p(a)2^{j(N+d)}(1+|x|)^N
 \le 
   \frac{c_0C' R^d}{2^{|d+N|}-1}p(a)(1+|x|)^N 2^{(k+1)(N+d)_+}.
\end{equation}
In case $d+N=0$ the $k$ bounds are equal.

The remainder in \eqref{tele-ineq} fulfils
$|a^k(x,D)v^0(x)|\le c_1R^{N'}(1+|x|)^N$ for a large $N'$;  
cf.\ the first part of Corollary~\ref{Fa-cor} and Lemma~\ref{u*-lem}.
Altogether 
$|a^k(x,D)v^{k}(x)|\le C(k)(1+|x|)^{N}$.
\end{proof}

\section{Adjoints of Type $1,1$-Operators}   \label{adj-sect}
For classical pseudo-differential operators $a(x,D)\colon {\cal S}\to{\cal S'}$
it is well known that the adjoint 
$a(x,D)^*\colon{\cal S}\to{\cal S'}$ has symbol $a^*(x,\eta)=e^{\im D_x\cdot  D_\eta}\overline{a(x,\eta)}$,
and that $a\mapsto a^*$ sends e.g.\ $S^d_{1,0}$ into itself.

\subsection{The Basic Lemma}   \label{tdc-ssect}
In order to show that the twisted diagonal condition \eqref{tdc-cnd} 
also implies continuity $a(x,D)\colon \cal S'\to\cal S'$, 
a basic result on the adjoint symbols is recalled from
\cite{H88} and \cite[Lem.~9.4.1]{H97}:

\begin{lem}
  \label{GB-lem}
When $a(x,\eta)$ is in $S^{d}_{1,1}(\Rn\times \Rn)$ and for some 
$B\ge 1$ satisfies the twisted diagonal condition \eqref{tdc-cnd},
then the adjoint symbol 
$b(x,\eta)=e^{\im D_x\cdot  D_\eta}\overline{a(x,\eta)}$ is in 
$S^{d}_{1,1}(\Rn\times \Rn)$ in this case and
\begin{equation}
  \hat b(\xi,\eta)=0 \quad\text{when}\quad |\xi+\eta|>B(|\eta|+1).
\end{equation}
Moreover,
\begin{equation}
  |D^\alpha_\eta D^\beta_x b(x,\eta)|\le 
  C_{\alpha\beta}(a)B(1+B^{d-|\alpha|+|\beta|})(1+|\eta|)^{d-|\alpha|+|\beta|},
  \label{tdc-ineq}
\end{equation}
for certain seminorms $C_{\alpha\beta}$ that are continuous on 
$S^{d}_{1,1}(\Rn\times \Rn)$ and do not depend on $B$.
\end{lem}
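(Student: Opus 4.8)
The plan is to compute the adjoint symbol on the partially Fourier transformed side, where H\"ormander's twisted diagonal condition becomes a transparent support restriction, and then to recover and estimate the symbol by a dyadic reduction to $S^{-\infty}$. \emph{The key identity.} Under $\cal F_{x\to\xi}$ the operator $D_x$ turns into multiplication by $\xi$, so $e^{\im D_x\cdot D_\eta}$ turns into $e^{\im\xi\cdot D_\eta}=e^{\xi\cdot\partial_\eta}$, i.e.\ translation by $\xi$ in the $\eta$-variable; since moreover $\cal F_{x\to\xi}\overline{a(x,\eta)}=\overline{\hat a(-\xi,\eta)}$, one gets
\begin{equation*}
  \hat b(\xi,\eta)=\overline{\hat a(-\xi,\,\eta+\xi)}.
\end{equation*}
For $a\in\cal S$ this is an elementary computation; for general $a\in S^d_{1,1}$ I would take the right-hand side (the pullback of the distribution $\hat a$ under the linear isomorphism $(\xi,\eta)\mapsto(-\xi,\eta+\xi)$, followed by conjugation) as the definition of $b$, postponing the check that $b(x,D)=a(x,D)^*$. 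As $(-\xi)+(\eta+\xi)=\eta$, evaluating \eqref{tdc-cnd} at $(-\xi,\eta+\xi)$ gives $\hat a(-\xi,\eta+\xi)=0$ whenever $B(1+|\eta|)<|\eta+\xi|$; hence $\hat b(\xi,\eta)=0$ for $|\xi+\eta|>B(|\eta|+1)$, which is the asserted support property.

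\emph{The symbol estimates.} I would first reduce to $\alpha=\beta=0$: by the identity above, $\cal F_x\bigl(D^\beta_x D^\alpha_\eta b\bigr)(\xi,\eta)$ equals, up to a sign, the ``$\hat b$''-transform of the symbol $D^\beta_x D^\alpha_\eta a$, which lies in $S^{d-|\alpha|+|\beta|}_{1,1}$ and still satisfies \eqref{tdc-cnd} with the \emph{same} $B$ (differentiation cannot enlarge the support of $\hat a$ inside the open region of \eqref{tdc-cnd}). So it suffices to prove $|b(x,\eta)|\le C(a)\,B(1+B^{d})(1+|\eta|)^{d}$ for symbols of any order $d$. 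For this, decompose $a=\sum_{j\ge0}a_j$ with $a_j=a\,\Phi_j$, where $1=\sum_j\Phi_j$ is a Littlewood--Paley partition of unity in $\eta$ with $\supp\Phi_j\subset\{\,2^{j-1}\le|\eta|\le 2^{j+1}\,\}$ for $j\ge1$. Each $a_j$ is in $S^{-\infty}$, so $b_j:=e^{\im D_xD_\eta}\overline{a_j}$ is classically defined, and by the identity above $b_j(x,\eta)=0$ unless $2^j\le 2B(1+|\eta|)$. Writing $b_j$ as an oscillatory integral, integrating by parts to produce the decay needed for convergence (which gains $(1+|\eta|)^{-2M}$ when $|\eta|\gg 2^j$, since then $|\xi|\sim|\eta|$ on the relevant set), and using $|D^\gamma_\eta a_j|\lesssim 2^{j(d-|\gamma|)}$ together with the volume $\sim 2^{jn}$ of the relevant frequency annulus, one bounds $|b_j(x,\eta)|$ and sums over $0\le j$ with $2^j\lesssim B(1+|\eta|)$; the length of this dyadic range, and the sizes of the scales it covers, are what produce the factor $B(1+B^{d})$. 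The resulting seminorms $C_{\alpha\beta}$ are finite sums of the $p_{\gamma,\delta}$ of \eqref{pab-eq}, hence continuous on $S^d_{1,1}$ and independent of $B$.

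\emph{Identification of the adjoint, and the main obstacle.} For $u,v\in\cal S$ one has $\scal{a(x,D)u}{v}=\sum_j\scal{a_j(x,D)u}{v}=\sum_j\scal{u}{b_j(x,D)v}=\scal{u}{b(x,D)v}$: the first equality by dominated convergence since $\sum_j a_j=a$ pointwise and $u,v\in\cal S$, the middle one the classical adjoint formula for $a_j\in\OP(S^{-\infty})$, and the last one because the estimates above make $\sum_j b_j$ converge to $b$ in $S^d_{1,1}$, so $\sum_{j\le J}b_j(x,D)v\to b(x,D)v$ in $\cal S$ as $J\to\infty$ by the continuity \eqref{SSS-eq}. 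The crux---and the place where the twisted diagonal condition is indispensable---is precisely this convergence: for a general type $1,1$ symbol the classical asymptotic series $\sum_\gamma\tfrac1{\gamma!}\partial^\gamma_\eta D^\gamma_x\overline a$ for the adjoint symbol does not converge, every term being of the same order $d$ (this is the mechanism behind Ching's counterexample); it is only the support localisation above, confining $\supp\hat b$ to $\{\,|\xi+\eta|\le B(1+|\eta|)\,\}$, that cuts the dyadic sum down to $O(\log(B(1+|\eta|)))$ scales and makes it converge with quantitative control. I expect the one genuinely fiddly point to be keeping the exact $B$-dependence in \eqref{tdc-ineq}; this is H\"ormander's bookkeeping, cf.\ \cite[Lem.~9.4.1]{H97}.
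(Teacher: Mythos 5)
The paper does not actually prove Lemma~\ref{GB-lem}: it states that the result is ``recalled from \cite{H88} and \cite[Lem.~9.4.1]{H97}'' and gives no proof of its own, so there is no in-paper argument to compare against. What you have reconstructed is, in essence, the H\"ormander argument. The key identity $\hat b(\xi,\eta)=\overline{\hat a(-\xi,\eta+\xi)}$ is exactly the right tool; it makes the support statement an immediate consequence of \eqref{tdc-cnd} (your computation is correct), and it is the identity underlying both the paper's later uses and the cited reference. The Littlewood--Paley decomposition $a=\sum a_j$ with $a_j\in S^{-\infty}$ and the resulting truncation $2^j\lesssim B(1+|\eta|)$ on the adjoint side is also the standard way of organizing the proof of \eqref{tdc-ineq}, and your observation that the $B(1+B^{d-|\alpha|+|\beta|})$ factor comes from the length and scales of this dyadic range (geometric for $d-|\alpha|+|\beta|>0$, logarithmic at $0$, bounded for negative exponent, uniformly dominated by $B(1+B^{\,\cdot})$) is the right intuition.

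Two points to tighten. First, your reduction to $\alpha=\beta=0$ is valid, but note explicitly that $D^\alpha_\eta\hat a$ still vanishes on the \emph{open} set where \eqref{tdc-cnd} requires vanishing, and that multiplication by $\xi^\beta$ cannot enlarge the $\xi$-support; without the openness remark the reduction is incomplete. Second, your statement that ``$\sum_j b_j$ converges to $b$ in $S^d_{1,1}$'' is stronger than what the estimates actually give: the tails $\sum_{j>J}b_j$ remain of size $\sim B^d$ in the $S^d_{1,1}$-seminorms on the region $B(1+|\eta|)\sim 2^J$, so the sum is only locally finite in $(x,\eta)$ and converges in $S^{d'}_{1,1}$ for $d'>d$ (compare the paper's Corollary~\ref{a*-cor}, which works in $S^{d+1}_{1,1}$ for the analogous reason). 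This is harmless for the adjoint identification, since $\OP\colon S^{d'}_{1,1}\times\cal S\to\cal S$ is still continuous, but the claim as written is not accurate. The remaining gap --- the oscillatory-integral/integration-by-parts estimate for each $b_j$ --- is precisely the bookkeeping you flag, and \cite[Lem.~9.4.1]{H97} is the right place to look for it.
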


The twisted diagonal condition \eqref{tdc-cnd} implies that 
$a^*(x,D)=b(x,D)$ is a map $\cal S\to\cal S$, as it is of type $1,1$ by Lemma~\ref{GB-lem}, so then 
$a(x,D)$ has the continuous linear \emph{extension} $b(x,D)^*\colon \cal S'\to\cal S'$.
It is natural to expect that this coincides with the definition 
of $a(x,D)$ by vanishing frequency modulation:

\begin{prop}
  \label{GB-prop}
If $a(x,\eta)\in S^d_{1,1}(\Rn\times \Rn)$ fulfils 
\eqref{tdc-cnd}, then $a(x,D)$ is a continuous linear map $\cal S'(\Rn)\to\cal
S'(\Rn)$ that equals the adjoint of $b(x,D)\colon \cal S(\Rn)\to\cal
S(\Rn)$, when $b(x,\eta)$ is the adjoint symbol as in Lemma~\ref{GB-lem}.
\end{prop}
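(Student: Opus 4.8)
The plan is to identify the operator $a(x,D)$ of Definition~\ref{a11-defn} with the sesquilinear adjoint $b(x,D)^{*}\colon\cal S'\to\cal S'$ of the map $b(x,D)\colon\cal S\to\cal S$, after which the asserted $\cal S'$‑continuity is automatic: by Lemma~\ref{GB-lem} the symbol $b$ lies in $S^{d}_{1,1}$, so $b(x,D)\colon\cal S\to\cal S$ is continuous by \eqref{SSS-eq}, and since $\cal S$ is a reflexive Fr\'echet space its adjoint is continuous for the strong dual topologies. Hence everything reduces to proving that for every admissible modulation function $\psi$ and every $u\in\cal S'$ the limit in \eqref{aPsi-eq} exists in $\cal D'$, is independent of $\psi$, and equals $b(x,D)^{*}u$. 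Writing $A_{m}=\OP(\psi(2^{-m}D_{x})a(x,\eta)\psi(2^{-m}\eta))\in\OP(S^{-\infty})$, I would test against $\varphi\in C_{0}^{\infty}$ and use the exact adjoint relation $\scal{A_{m}u}{\varphi}=\scal{u}{A_{m}^{*}\varphi}$, valid on $\cal S'\times\cal S$ because $A_{m}$ is smoothing, so that the whole matter is moved onto the behaviour of $A_{m}^{*}\varphi$ in $\cal S$.

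The key point is that the frequency‑modulated symbol $a_{m}(x,\eta)=\psi(2^{-m}D_{x})a(x,\eta)\psi(2^{-m}\eta)$ still satisfies the twisted diagonal condition \eqref{tdc-cnd} with the \emph{same} constant $B$, since $\hat a_{m}(\xi,\eta)=\psi(2^{-m}\xi)\hat a(\xi,\eta)\psi(2^{-m}\eta)$ vanishes wherever $\hat a$ does; moreover $\{a_{m}\}_{m\ge1}$ is bounded in $S^{d}_{1,1}$ by the standard uniform estimates for such truncations (convolution in $x$ with a kernel of fixed $L_{1}$‑mass, together with Leibniz' rule for the $\psi(2^{-m}\eta)$‑factor). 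Applying Lemma~\ref{GB-lem} to each $a_{m}$ then gives that the adjoint symbol of $A_{m}$, namely $a_{m}^{*}=e^{\im D_{x}\cdot D_{\eta}}\overline{a_{m}}\in S^{-\infty}$, obeys the bounds \eqref{tdc-ineq}; as the seminorms $C_{\alpha\beta}$ there are continuous on $S^{d}_{1,1}$ and $B$ is fixed, the family $\{a_{m}^{*}\}_{m\ge1}$ is \emph{bounded} in $S^{d}_{1,1}$. Consequently, by the continuous bilinear map \eqref{SSS-eq}, $\{A_{m}^{*}\varphi\}_{m\ge1}$ is a bounded subset of $\cal S$ for each fixed $\varphi$.

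To upgrade boundedness to convergence I would invoke that for $g\in C_{0}^{\infty}$ the definition recovers \eqref{axDu-id}, i.e.\ $A_{m}g\to\OP(a)g=a(x,D)g$ in $\cal D'$. Hence for $g,\varphi\in C_{0}^{\infty}$, $\scal{g}{A_{m}^{*}\varphi}=\scal{A_{m}g}{\varphi}\to\scal{\OP(a)g}{\varphi}=\scal{g}{b(x,D)\varphi}$, where $b=e^{\im D_{x}\cdot D_{\eta}}\overline a$ serves as the classical adjoint symbol of $a$ on $\cal S$, legitimately so because $b\in S^{d}_{1,1}$ by Lemma~\ref{GB-lem}. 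Since $\cal S$ is a Montel space the bounded sequence $\{A_{m}^{*}\varphi\}$ is relatively compact, and any subsequential limit $w$ satisfies $\scal{g}{w}=\scal{g}{b(x,D)\varphi}$ for all $g\in C_{0}^{\infty}$, whence $w=b(x,D)\varphi$; therefore the whole sequence converges, $A_{m}^{*}\varphi\to b(x,D)\varphi$ in $\cal S$. Feeding this back, for arbitrary $u\in\cal S'$ and $\varphi\in C_{0}^{\infty}$ one gets $\scal{A_{m}u}{\varphi}=\scal{u}{A_{m}^{*}\varphi}\to\scal{u}{b(x,D)\varphi}=\scal{b(x,D)^{*}u}{\varphi}$, a limit independent of $\psi$. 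By Definition~\ref{a11-defn} this means $u\in D(a(x,D))$ with $a(x,D)u=b(x,D)^{*}u$, which finishes the proof.

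The hard part — and the only place the hypothesis is genuinely used — is the second paragraph: without \eqref{tdc-cnd} the operation $a\mapsto e^{\im D_{x}\cdot D_{\eta}}\overline a$ need not map $S^{d}_{1,1}$ into itself, so the uniform (in $m$) symbol bounds on the adjoints $a_{m}^{*}$, hence the crucial $\cal S$‑boundedness of $\{A_{m}^{*}\varphi\}$, would collapse. What makes it work is precisely that \eqref{tdc-cnd} is inherited by \emph{all} the truncations $a_{m}$ with one and the same $B$, so that Lemma~\ref{GB-lem} can be applied with constants uniform in $m$. The remaining inputs — that adjoints of $\OP(S^{-\infty})$‑operators are again $\OP(S^{-\infty})$ with exact adjoint symbol $e^{\im D_{x}\cdot D_{\eta}}\overline{\,\cdot\,}$, and that $A_{m}g\to\OP(a)g$ for $g\in C_{0}^{\infty}$ — are classical, respectively built into the framework recalled above.
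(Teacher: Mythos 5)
Your proof is correct and reaches the same endpoint as the paper's, but by a genuinely different route. Both arguments share the decisive observation that the frequency-modulated symbols $a_m=\psi(2^{-m}D_x)a(x,\eta)\psi(2^{-m}\eta)$ inherit \eqref{tdc-cnd} with the \emph{same} constant $B$, so Lemma~\ref{GB-lem} controls the adjoint symbols $b_m=a_m^*$ uniformly in $m$. The divergence is in how the limit is extracted. The paper first establishes the \emph{convergence} $a_m\to a$ in $S^{d+1}_{1,1}$ (with the usual one-order loss from the truncations) and then uses the estimate \eqref{tdc-ineq} to show that the adjoint map is \emph{continuous} on the metric subspace of symbols satisfying \eqref{tdc-cnd}, concluding $b_m\to b$ in $S^{d+1}_{1,1}$ and hence $\OP(b_m)\varphi\to\OP(b)\varphi$ in $\cal S$. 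You instead settle for \emph{boundedness}: $\{a_m\}$ is bounded in $S^d_{1,1}$, hence $\{b_m\}$ is bounded there by \eqref{tdc-ineq}, hence $\{A_m^*\varphi\}$ is bounded in $\cal S$, and you recover convergence of the whole sequence by combining the Montel property of $\cal S$ with a limit identification against $C_0^\infty$ test functions (which works because $A_m g\to a(x,D)g$ in $\cal D'$ for $g\in\cal S$ and $b$ is the formal adjoint symbol on $\cal S$). Your route avoids the $S^{d+1}_{1,1}$-convergence step and the order loss, at the cost of invoking Montel compactness and a uniqueness-of-limits argument; the paper's route is more direct but requires the continuity assertion about $a\mapsto a^*$. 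Both are correct proofs of the same proposition.
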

\begin{proof}
When $\psi\in C^\infty_0(\Rn)$ is such that $\psi=1$ in a neighbourhood of
the origin, a simple convolution estimate (cf.\ \cite[Lem.~2.1]{JJ08vfm})
gives that in the topology of $S^{d+1}_{1,1}$,
\begin{equation}
  \psi(2^{-m}D_x)a(x,\eta)\psi(2^{-m}\eta)\to a(x,\eta)
  \quad\text{for}\quad m\to\infty .
\end{equation}
Since the supports of the partially Fourier transformed symbols
\begin{equation}
  \psi(2^{-m}\xi)\cal F_{x\to\xi}a(\xi,\eta)\psi(2^{-m}\eta),\quad m\in \N,
\end{equation}
are contained in $\supp\cal F_{x\to\xi}a(\xi,\eta)$, clearly this sequence
also fulfils \eqref{tdc-cnd} for the same $B$. 
As the passage to adjoint symbols by \eqref{tdc-ineq}
is continuous from the metric subspace of $S^{d}_{1,1}$ 
fulfilling \eqref{tdc-cnd} to $S^{d+1}_{1,1}$, one therefore has that
\begin{equation}
  b_m(x,\eta):= e^{\im D_x\cdot D_\eta}
  (\overline{\psi(2^{-m}D_x)a(x,\eta)\psi(2^{-m}\eta)})
  \xrightarrow[m\to\infty ]{~}
     e^{\im D_x\cdot D_\eta}\overline{a(x,\eta)}=: b(x,\eta).
  \label{*conv-eq}
\end{equation}
Combining this with the fact that $b(x,D)$ as an operator on the Schwartz
space depends continuously on the symbol, 
one has for $u\in \cal S'(\Rn)$, $\varphi\in \cal S(\Rn)$,
\begin{equation}
  \begin{split}
    \scal{b(x,D)^*u}{\varphi}              
&= \scal{u}{\lim_{m\to\infty }\OP(b_m(x,\eta))\varphi}
\\
  &= \lim_{m\to\infty }
     \scal{\OP(\psi(2^{-m}D_x)a(x,\eta)\psi(2^{-m}\eta))u}{\varphi}.
  \end{split}
\end{equation}
As the left-hand side is independent of $\psi$ the limit 
in \eqref{aPsi-eq} is so, hence
the definition of $a(x,D)$ gives that every 
$u\in \cal S'(\Rn)$ is in $D(a(x,D))$ and $a(x,D)u=b(x,D)^*u$ as claimed.
\end{proof}

The mere extendability to 
$\cal S'$ under the twisted diagonal condition \eqref{tdc-cnd} could
have been observed already in \cite{H88,H97}, 
but the above result seems to be
the first sufficient condition for a type $1,1$-operator to
be \emph{defined} on the entire $\cal S'(\Rn)$.

\subsection{The Self-Adjoint Subclass $\mathbf{\widetilde S}^{\mathbf{d}}_{\mathbf{1}\mathbf{1}}$}
   \label{small-ssect}
Proposition~\ref{GB-prop} shows that \eqref{tdc-cnd} suffices for $D(a(x,D))=\cal S'$.
But \eqref{tdc-cnd} is too strong to be necessary; a vanishing to infinite
order along $\cal T$ should suffice.

In this section, the purpose is to prove that 
$a(x,D)\colon \cal S'\to\cal S'$ is continuous if more
generally the twisted diagonal condition of order $\sigma$, 
that is \eqref{Hsigma-eq}, holds for all $\sigma\in \R$.

This will supplement H{\"o}rmander's investigation in \cite{H88,H89,H97},
from where the main ingredients are recalled.
Using \eqref{axe-eq} and $\cal F_{x\to\xi}$ one has in $\cal S'(\Rn\times \Rn)$, 
\begin{equation}
  a(x,\eta)= (a(x,\eta)-a_{\chi,1}(x,\eta))+
  \sum_{\nu=0}^\infty (a_{\chi,2^{-\nu}}(x,\eta)-a_{\chi,2^{-\nu-1}}(x,\eta)).
  \label{Hsum-eq}
\end{equation}
Here the first term $a(x,\eta)-a_{\chi,1}(x,\eta)$
fulfils \eqref{tdc-cnd} for $B=1$, 
so Proposition~\ref{GB-prop} applies to it.
Introducing $e_\varepsilon(x,D)$ like in \cite[Sect.~9.3]{H97} as
\begin{equation}
  \hat e_\varepsilon(x,\eta)=
   \hat a_{\chi,\varepsilon}(\xi,\eta)-\hat a_{\chi,\varepsilon/2}(\xi,\eta)
   =(\chi(\xi+\eta,\varepsilon\eta)-\chi(\xi+\eta,\varepsilon\eta/2))
    \hat a(x,\eta),
\end{equation}
it is useful to infer from the choice of $\chi$ that
\begin{equation}
  \supp\hat e_\varepsilon \subset 
  \Set{(\xi,\eta)}{
    \tfrac{\varepsilon}{4}|\eta|\le\max(1,|\xi+\eta|)\le \varepsilon|\eta|}.
  \label{Feep-eq}
\end{equation}
In particular this yields that $\hat e_\varepsilon=0$ when
$1+|\xi+\eta|<|\eta|\varepsilon/4$, so $e_\varepsilon$ fulfils
\eqref{tdc-cnd} for $B=4/\varepsilon$. Hence the terms $e_{2^{-\nu}}$ in
\eqref{Hsum-eq} do so for $B=2^{\nu+2}$.

The next result characterises the $a\in S^d_{1,1}$ for which the adjoint
symbol $a^*$ is again in $S^d_{1,1}$; cf.\ the below condition \eqref{a*-cnd}. 
As adjoining is an involution, these symbols constitute the class
\begin{equation} \label{Stilde-id}
  \widetilde S^d_{1,1}:= S^d_{1,1}\cap (S^d_{1,1})^* .
\end{equation}

\begin{thm}   \label{a*-thm}
When $a(x,\eta)$ is a symbol in $S^d_{1,1}(\Rn\times \Rn)$ the following
properties are equivalent:
\begin{rmlist}
    \item   \label{a*-cnd}
    The adjoint symbol $a^*(x,\eta)$ is also in $S^d_{1,1}(\Rn\times \Rn)$.
  \item   \label{orderN-cnd}
  For arbitrary $N>0$ and $\alpha$, $\beta$ there is some constant
$C_{\alpha,\beta,N}$ such that 
\begin{equation}
  |D^\alpha_\eta D^\beta_x a_{\chi,\varepsilon}(x,\eta)|\le C_{\alpha,\beta,N}
  \varepsilon^{N}(1+|\eta|)^{d-|\alpha|+|\beta|}
  \quad\text{for}\quad 0<\varepsilon<1.
\end{equation} 
  \item   \label{sigma-cnd}
  For all $\sigma\in\R$ there is a constant $c_{\alpha,\sigma}$ such that
for $0<\varepsilon<1$
\begin{equation}
  \sup_{R>0,\ x\in\Rn} R^{|\alpha|-d}
  \big( \int_{R\le |\eta|\le 2R}
   |D^\alpha_{\eta} a_{\chi,\varepsilon}(x,\eta)|^2\,\frac{d\eta}{R^n}
  \big)^{1/2}
  \le c_{\alpha,\sigma} \varepsilon^{\sigma+\tfrac{n}{2}-|\alpha|}.
\end{equation}
\end{rmlist}
In the affirmative case $a\in \widetilde S^{d}_{1,1}$, cf.\ \eqref{Stilde-id}, and $a^* $ fulfils an estimate
\begin{equation}
  |D^\alpha_\eta D^\beta_x a^*(x,\eta)| \le 
  (C_{\alpha,\beta}(a)+C'_{\alpha,\beta,N})(1+|\eta|)^{d-|\alpha|+|\beta|}
\end{equation}
for a certain continuous seminorm $C_{\alpha,\beta}$ on $S^d_{1,1}(\Rn\times \Rn)$ and some finite sum
$C'_{\alpha,\beta,N}$ of constants fulfilling the inequalities in \eqref{orderN-cnd}.
\end{thm}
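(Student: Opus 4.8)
The plan is to split Theorem~\ref{a*-thm} into the ``lower order'' equivalence \eqref{orderN-cnd}$\,\Leftrightarrow\,$\eqref{sigma-cnd}, which is a rescaling argument, and the equivalence of these with \eqref{a*-cnd} together with the quantitative bound on $a^*$, whose backbone is the dyadic decomposition \eqref{Hsum-eq} fed into Lemma~\ref{GB-lem}. The assertion $a\in\widetilde S^d_{1,1}$, cf.\ \eqref{Stilde-id}, is of course immediate once \eqref{a*-cnd} is available, since adjoining is an involution.

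For \eqref{orderN-cnd}$\Rightarrow$\eqref{sigma-cnd} I would insert the pointwise bound into the corona integral: on $R\le|\eta|\le 2R$ one has $(1+|\eta|)\le cR$, so after division by $R^n$ and a square root the left-hand side of \eqref{sigma-cnd} is $\le R^{|\alpha|-d}\cdot cR^{d-|\alpha|}\varepsilon^N=c\varepsilon^N$; since \eqref{orderN-cnd} holds for \emph{every} $N$, choosing $N\ge\sigma+n/2-|\alpha|$ gives \eqref{sigma-cnd} with $c_{\alpha,\sigma}$ equal to the relevant $C_{\alpha,0,N}$. Conversely, to bound $D^\alpha_\eta a_{\chi,\varepsilon}(x,\eta)$ at a point with $|\eta|=R$ I would apply Sobolev's inequality on the corona $R\le|\eta|\le 2R$, rescaled to the unit corona, i.e. $\|w\|_{L^\infty}\le cR^{-n/2}\sum_{|\gamma|\le[n/2]+1}R^{|\gamma|}\|D^\gamma_\eta w\|_{L^2}$ with $w=D^\alpha_\eta a_{\chi,\varepsilon}(x,\cdot)$; feeding in \eqref{sigma-cnd} for each exponent $\alpha+\gamma$ yields $c\,R^{d-|\alpha|}\sum_{|\gamma|\le[n/2]+1}\varepsilon^{\sigma+n/2-|\alpha|-|\gamma|}$, whose slowest-decaying term is $\varepsilon^{\sigma-|\alpha|+n/2-[n/2]-1}$; as $\sigma$ is arbitrary this gives the $\eta$-derivative part of \eqref{orderN-cnd} (here $R^{d-|\alpha|}\sim(1+|\eta|)^{d-|\alpha|}$ since $a_{\chi,\varepsilon}$ lives in $|\eta|\ge1$). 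The $x$-derivatives are appended by expanding $\xi^\beta=\sum_{\gamma\le\beta}\binom{\beta}{\gamma}(\xi+\eta)^\gamma(-\eta)^{\beta-\gamma}$ in $\cal F_{x\to\xi}D^\beta_x a_{\chi,\varepsilon}=\xi^\beta\chi(\xi+\eta,\varepsilon\eta)\hat a(\xi,\eta)$: on $\supp\chi(\cdot,\varepsilon\eta)$ one has $|\xi+\eta|\le\varepsilon|\eta|$, so $(\xi+\eta)^\gamma$ only improves the $\varepsilon$-decay while $(-\eta)^{\beta-\gamma}$ costs at most $(1+|\eta|)^{|\beta|}$; thus $D^\beta_x a_{\chi,\varepsilon}$ obeys the same estimate with $d$ replaced by $d+|\beta|$, reducing to the case just done. (These manipulations are as in \cite[Sect.~9.4]{H97}.)

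For \eqref{orderN-cnd}$\Rightarrow$\eqref{a*-cnd} and the estimate on $a^*$ I would use \eqref{Hsum-eq}, $a=(a-a_{\chi,1})+\sum_{\nu\ge0}e_{2^{-\nu}}$. The term $a-a_{\chi,1}$ satisfies the twisted diagonal condition \eqref{tdc-cnd} with $B=1$, so by Lemma~\ref{GB-lem}, together with the continuity statement in Lemma~\ref{Heps-lem}, its adjoint symbol lies in $S^d_{1,1}$ with seminorms bounded by a continuous seminorm $C_{\alpha,\beta}(a)$ on $S^d_{1,1}$; this is the first contribution to the asserted bound for $a^*$. By \eqref{Feep-eq} each $e_{2^{-\nu}}$ satisfies \eqref{tdc-cnd} with $B=2^{\nu+2}$, so \eqref{tdc-ineq} gives $(e_{2^{-\nu}})^*\in S^d_{1,1}$ with $|D^\alpha_\eta D^\beta_x(e_{2^{-\nu}})^*|\le c\,2^{\nu+2}\bigl(1+2^{(\nu+2)|d-|\alpha|+|\beta||}\bigr)C_{\alpha\beta}(e_{2^{-\nu}})(1+|\eta|)^{d-|\alpha|+|\beta|}$ for a fixed continuous seminorm $C_{\alpha\beta}$ on $S^d_{1,1}$. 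Now \eqref{orderN-cnd}, read off at $\varepsilon=2^{-\nu}$ for the pieces $e_{2^{-\nu}}=a_{\chi,2^{-\nu}}-a_{\chi,2^{-\nu-1}}$, says precisely that $C_{\alpha\beta}(e_{2^{-\nu}})=\cal O(2^{-\nu N})$ for all $N$; taking $N>1+|d-|\alpha|+|\beta||$ makes $\sum_\nu|D^\alpha_\eta D^\beta_x(e_{2^{-\nu}})^*|$ a convergent geometric-type series, so $a^*=(a-a_{\chi,1})^*+\sum_\nu(e_{2^{-\nu}})^*$ converges in $S^d_{1,1}$; this proves \eqref{a*-cnd}, and the tail sum, dominated by a fixed multiple of finitely many of the constants appearing in \eqref{orderN-cnd}, is the term $C'_{\alpha,\beta,N}$.

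The delicate direction is \eqref{a*-cnd}$\Rightarrow$\eqref{orderN-cnd}, and I expect the uniform-in-$\nu$ bookkeeping there to be the main obstacle. Here I would follow H{\"o}rmander \cite[Sect.~9.4]{H97}: one must recover rapid decay of the $\cal T$-localised pieces of $a$ from the mere membership $a^*\in S^d_{1,1}$. Using the adjoint identity $\widehat{a^*}(\xi,\eta)=\overline{\hat a(-\xi,\xi+\eta)}$ together with \eqref{Feep-eq}, the partial Fourier transform of $(e_{2^{-\nu}})^*$ is concentrated in a thin conical shell attached to $\cal T$ at scale $2^{-\nu}$, and such shells are essentially disjoint for distinct $\nu$; a $\xi$-localisation of bounded operator norm therefore extracts $(e_{2^{-\nu}})^*$ from $a^*$ up to a bounded number of neighbouring indices, so all $S^d_{1,1}$-seminorms of the individual pieces are controlled, uniformly in $\nu$, by those of $a^*$. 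Combining this uniform control with the quantitative $B=2^{\nu+2}$ dependence in \eqref{tdc-ineq} relating $e_{2^{-\nu}}$ and its adjoint, and with the $\sigma=0$ bounds of Lemma~\ref{Heps-lem}, one is forced to $C_{\alpha\beta}(e_{2^{-\nu}})=\cal O(2^{-\nu N})$ for every $N$, which is \eqref{orderN-cnd}. Carrying out the support separation and the attendant estimates with constants that remain continuous seminorms on $S^d_{1,1}$, as claimed, is the technical core where care is needed.
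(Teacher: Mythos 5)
Your handling of \eqref{orderN-cnd}$\Leftrightarrow$\eqref{sigma-cnd} and of \eqref{orderN-cnd}$\Rightarrow$\eqref{a*-cnd} (plus the quantitative estimate on $a^*$) is essentially the paper's argument: Sobolev/Bernstein on dyadic coronas for the first, and the decomposition \eqref{Hsum-eq} fed into Lemma~\ref{GB-lem} with the $B=2^{\nu+2}$ count for the second. Those parts are fine.

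The direction \eqref{a*-cnd}$\Rightarrow$\eqref{orderN-cnd} has a genuine gap. You propose to extract $(e_{2^{-\nu}})^*$ from $a^*$ by a conical $\xi$-localisation of bounded operator norm, obtaining \emph{uniform} bounds on $C_{\alpha\beta}\bigl((e_{2^{-\nu}})^*\bigr)$, and then to ``force'' the rapid decay $C_{\alpha\beta}(e_{2^{-\nu}})=\cal O(2^{-\nu N})$ by combining this with the $B=2^{\nu+2}$ factor in \eqref{tdc-ineq} and the $\sigma=0$ bounds of Lemma~\ref{Heps-lem}. This cannot work: \eqref{tdc-ineq} bounds $(e_{2^{-\nu}})^*$ \emph{from above} by $2^{\nu(1+(d-|\alpha|+|\beta|)_+)}C_{\alpha\beta}(e_{2^{-\nu}})$, so knowing the left side is uniformly bounded tells you nothing about the right side; and applying Lemma~\ref{GB-lem} the other way round (to $b_\nu=(e_{2^{-\nu}})^*$, which satisfies \eqref{tdc-cnd} with $B=1$) only gives $C_{\alpha\beta}(e_{2^{-\nu}})\lesssim C_{\alpha\beta}\bigl((e_{2^{-\nu}})^*\bigr)$ uniformly in $\nu$. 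Either route ends at uniform boundedness, and uniform boundedness of $C_{\alpha\beta}(e_{2^{-\nu}})$ yields, upon summing over $\nu$ with $2^{\nu+1}>(1-\varepsilon)/\varepsilon$, only $\cal O(\log(1/\varepsilon))$ terms of size $\cal O(1)$\,---\,nowhere near the required $\varepsilon^N$.

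What is actually needed, and what the paper does, is a cancellation argument. One first writes $a^*=A_0+A_1$ with $A_0\in S^{-\infty}$ and $A_1=0$ for $|\eta|<1/2$, then decomposes $\hat a^{\,*}$ by a Littlewood--Paley partition in the cone variable, $\hat a^{\,*}(\xi,\eta)=\hat A_0+\sum_\nu\Phi_\nu(\xi/|\eta|)\hat A_1(\xi,\eta)$. The crucial input is \cite[Prop.~3.3]{H88}: writing $a^*_\nu(x,\eta)=\int|2^\nu\eta|^n\check\Phi(|2^\nu\eta|y)A_1(x-y,\eta)\,dy$, the vanishing moments of $\check\Phi$ for $\nu\ge1$ together with Taylor expansion of $A_1(x-y,\eta)$ around $y=0$ give $(2^{N\nu}a^*_\nu)_\nu$ bounded in $S^d_{1,1}$ for every $N$\,---\,rapid decay, not merely uniform boundedness. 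Since each $\hat a^{\,*}_\nu$ is supported where $\tfrac{11}{20}2^\nu|\eta|\le|\xi|\le\tfrac{13}{20}2^\nu|\eta|$, one checks $\hat a^{\,*}_\nu(\xi,\eta)=0$ for $10(1+|\xi+\eta|)<|\eta|$, so \emph{all} $a^*_\nu$ fulfil \eqref{tdc-cnd} with a fixed $B=10$; Lemma~\ref{GB-lem} then transfers the rapid decay to $a_\nu=a^{**}_\nu$ with $\nu$-independent constants. Finally the geometry of $\supp\hat a_\nu$ ($|\xi|\le2^{\nu+1}|\xi+\eta|$) shows that $a_{\chi,\varepsilon}$ only sees the $a_\nu$ with $2^{\nu+1}>(1-\varepsilon)/\varepsilon$, and summing the resulting geometric series produces the $\varepsilon^N$ bound. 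The vanishing-moment/Taylor cancellation is the engine of the whole implication and is absent from your sketch; without it, nothing in your chain of estimates produces decay in $\nu$.

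Two smaller remarks. For the $x$-derivative reduction in \eqref{sigma-cnd}$\Rightarrow$\eqref{orderN-cnd} the paper uses Bernstein's inequality directly from the $\xi$-support bound $|\xi|\le(1+\varepsilon)|\eta|$, which is cleaner than expanding $\xi^\beta$; your variant is workable but you would still need to track how the extra factors $(\xi+\eta)^\gamma$ and $\eta^{\beta-\gamma}$ interact with subsequent $D^\alpha_\eta$ derivatives. And your formula $\widehat{a^*}(\xi,\eta)=\overline{\hat a(-\xi,\xi+\eta)}$ and the ensuing shell computation for $\supp\widehat{(e_{2^{-\nu}})^*}$ are correct as far as they go; they just do not lead to rapid decay by themselves.
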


It should be observed from \eqref{a*-cnd} that $a(x,\eta)$ fulfils condition \eqref{orderN-cnd} or
\eqref{sigma-cnd} if and only if 
$a^*(x,\eta)$ does so\,---\,whereas neither \eqref{orderN-cnd} nor \eqref{sigma-cnd} make this obvious.
But \eqref{orderN-cnd} immediately gives the (expected) inclusion 
$\widetilde S^d_{1,1}\subset \widetilde S^{d'}_{1,1}$  for $d'>d$.
Condition \eqref{sigma-cnd} is
close in spirit to the Mihlin--H{\"o}rmander multiplier theorem,
and it is useful for the estimates shown later in Section~\ref{split-sect}.

\begin{rem}  \label{Heps-rem}
Conditions \eqref{orderN-cnd}, \eqref{sigma-cnd} 
both hold either for all $\chi$ satisfying \eqref{Hsigma-eq} or for none, 
for \eqref{a*-cnd} does not depend on $\chi$.
It suffices to verify \eqref{orderN-cnd} or \eqref{sigma-cnd} for
$0<\varepsilon<\varepsilon_0$ for some convenient 
$\varepsilon_0\in \,]0,1[\,$. This is implied by Lemma~\ref{Heps-lem} 
since every power $\varepsilon^p$ is bounded on the interval
$[\varepsilon_0,1]$. 
\end{rem}

Theorem~\ref{a*-thm} was undoubtedly known to H{\"o}rmander, for he stated the equivalence of \eqref{a*-cnd} 
and \eqref{orderN-cnd} explicitly in \cite[Thm.~4.2]{H88} and
\cite[Thm.~9.4.2]{H97} and gave brief remarks on \eqref{sigma-cnd} in the latter. 
Equivalence with continuous extensions $H^{s+d}\to H^s$ for all $s\in \R$ was also shown.
However, the exposition there left a considerable burden of verification to the reader.

Moreover, Theorem~\ref{a*-thm} was used without proof in a main $L_p$-theorem in \cite{JJ11lp},
and below in Section~\ref{selfad-sssect} a corollary to the proof will follow and decisively enter the first
proof of $\cal S'$-continuity.
Hence full details on the main result in Theorem~\ref{a*-thm} should be in order here:

\subsubsection{Equivalence of \eqref{orderN-cnd} and \eqref{sigma-cnd}}
That \eqref{orderN-cnd} implies \eqref{sigma-cnd} is seen at once by
insertion, taking $\beta=0$ and $N=\sigma+\tfrac{n}{2}-|\alpha|$.

Conversely, note first that $|\xi+\eta|\le \varepsilon |\eta|$ in the
spectrum of  $a_{\chi,\varepsilon}(\cdot ,\eta)$. 
That is, $|\xi|\le (1+\varepsilon)|\eta|$
so Bernstein's inequality gives 
\begin{equation}
  |D^\beta_x D^\alpha_\eta a_{\chi,\varepsilon}(x,\eta)|\le 
  ((1+\varepsilon)|\eta|)^{|\beta|}\sup_{x\in \R}
  |D^\alpha_\eta a_{\chi,\varepsilon}(x,\eta)|.
\end{equation} 
Hence $C_{\alpha,\beta,N}=2^{|\beta|}C_{\alpha,0,N}$ is possible,
so it suffices to prove
\eqref{sigma-cnd}$\implies$\eqref{orderN-cnd} only for $\beta=0$.

For the corona $1\le |\zeta|\le 2$ Sobolev's lemma gives for $f\in C^\infty
(\Rn)$,
\begin{equation}
  |f(\zeta)|\le c_1 (\sum_{|\beta|\le [n/2]+1} \int_{1\le |\zeta|\le 2}
   |D^\beta f(\zeta)|^2\,d\zeta)^{1/2}.
\end{equation}
Substituting  $D^{\alpha}_\eta a_{\chi,\varepsilon}(x,R\zeta)$
and $\zeta=\eta/R$, whereby $R\le |\eta|\le 2R$, $R>0$, yields
\begin{equation}
  \begin{split}
    |D^{\alpha}_\eta a_{\chi,\varepsilon}(x,\eta)|
&\le c_1 (\sum_{|\beta|\le [n/2]+1} R^{2|\beta|}\int_{R\le |\eta|\le 2R}
   |D^{\alpha+\beta}_{\eta} a_{\chi,\varepsilon}(x,\eta)|^2\,
  \frac{d\eta}{R^n})^{1/2}
\\
 &\le c_1 (\sum_{|\beta|\le [n/2]+1}R^{2d-2|\alpha|}C_{\alpha+\beta,\sigma}^2
   \varepsilon^{2(\sigma+\tfrac{n}{2}-|\alpha|-|\beta|)})^{1/2}
\\
 &\le c_1 (\sum_{|\beta|\le [n/2]+1}C_{\alpha+\beta,\sigma}^2
   )^{1/2} \varepsilon^{\sigma-1-|\alpha|} R^{d-|\alpha|}.
  \end{split}
\end{equation}
Here $R^{d-|\alpha|}\le (1+|\eta|)^{d-|\alpha|}$ for
$d\ge |\alpha|$, that leads to \eqref{orderN-cnd} 
as $\sigma\in \R$ can be arbitrary.

For $|\alpha|>d$ it is first noted that, by the support condition on $\chi$, 
one has $a_{\chi,\varepsilon}(x,\eta)\ne 0$ only for $2R\ge |\eta|\ge \varepsilon^{-1}>1$.
But $R\ge 1/2$ yields
$R^{d-|\alpha|}\le (\tfrac{1}{3}(\tfrac{1}{2}+2R))^{d-|\alpha|}\le 
  6^{|\alpha|-d}(1+|\eta|)^{d-|\alpha|}$,
so \eqref{orderN-cnd} follows from the above.

\subsubsection{The implication
\eqref{orderN-cnd}$\implies$\eqref{a*-cnd} and the estimate}
The condition \eqref{orderN-cnd} is exploited for each term in the
decomposition \eqref{Hsum-eq}. Setting
$b_\nu(x,\eta)=e^*_{2^{-\nu}}(x,\eta)$ it follows from Lemma~\ref{GB-lem}
that $b_\nu$ is in $S^d_{1,1}$ by the remarks after \eqref{Feep-eq}, cf
\eqref{Hsum-eq} ff, and \eqref{tdc-ineq} gives
\begin{equation}
  |D^\alpha_\eta D^\beta_x b_\nu(x,\eta)|\le 
  C_{\alpha,\beta}(e_\nu)2^{\nu+2} (1+2^{(\nu+2)(d-|\alpha|+|\beta|)})
  (1+|\eta|)^{d-|\alpha|+|\beta|}.
\end{equation}
Now \eqref{orderN-cnd} implies that 
$C_{\alpha,\beta}(a_{\chi,2^{-\nu}})\le 
C'_{\alpha,\beta,N}2^{-\nu N}$ for all $N>0$
(with other contants $C'_{\alpha,\beta,N}$ as the seminorms
$C_{\alpha,\beta}$ may contain derivatives of higher order than
$|\alpha|$ and $|\beta|$). Hence
$C_{\alpha,\beta}(e_{2^{-\nu}})\le 
C'_{\alpha,\beta,N}2^{1-\nu N}$.
It follows from this that $\sum b_\nu$ converges to some $b$
in $S^d_{1,1}$ (in the Fr\'echet topology of this space), so that
$a^*(x,\eta)=b(x,\eta)$ is in $S^d_{1,1}$.
More precisely, \eqref{tdc-ineq} and the above yields for
$N=2+(d-|\alpha|+|\beta|)_+$ 
\begin{equation}
  \begin{split}
  \frac{|D^\alpha_\eta D^\beta_x a^*(x,\eta)|}{(1+|\eta|)^{d-|\alpha|+|\beta|}}
  &\le 2^N C_{\alpha,\beta}(a-a_{\chi,1}) 
   +\sum_{\nu=0}^\infty C_{\alpha,\beta}(e_{2^{-\nu}})2^{\nu+2}
     (1+2^{(\nu+2)(d-|\alpha|+|\beta|)_+})
\\
  &\le 2^N C_{\alpha,\beta}(a-a_{\chi,1}) 
   +\sum_{\nu=0}^\infty 16 C'_{\alpha,\beta,N}2^{-\nu(N-1)}
     2^{(\nu+2)(d-|\alpha|+|\beta|)_+}
\\
  &\le 2^N C_{\alpha,\beta}(a-a_{\chi,1}) 
   + 4^{N+2} C'_{\alpha,\beta,N}.
  \end{split}
  \label{a*-est}
\end{equation}
Invoking continuity from Lemma~\ref{Heps-lem} in the first term, the
last part of the theorem follows. 

\subsubsection{Verification of \eqref{a*-cnd}$\implies$\eqref{orderN-cnd}}
It suffices to derive another decomposition 
\begin{equation}
  a=A+\sum_{\nu=0}^\infty a_\nu,
  \label{aAnu-eq}
\end{equation}
in which $A\in S^{-\infty }$ and each $a_\nu\in S^d_{1,1}$ with
$\hat a_\nu(\xi,\eta)=0$ for $2^{\nu+1}|\xi+\eta|<|\xi|$ and 
seminorms $C_{\alpha,\beta}(a_\nu)=\cal O(2^{-\nu N})$ for each $N>0$.

Indeed, when $\chi(\xi+\eta,\varepsilon\eta)\ne 0$ the triangle inequality
gives $|\xi+\eta|\le \varepsilon|\eta|\le
\varepsilon|\xi+\eta|+\varepsilon|\xi|$, whence 
$|\xi+\eta|(1-\varepsilon)/\varepsilon\le |\xi|$, so that for one thing
\begin{equation}
  \hat a_{\chi,\varepsilon}(x,\eta)=
  \chi(\xi+\eta,\varepsilon\eta)\hat A(x,\eta)+
  \sum_{2^{\nu+1}>(1-\varepsilon)/\varepsilon}
  \chi(\xi+\eta,\varepsilon\eta)\hat a_\nu(x,\eta).
\end{equation}
Secondly, for each seminorm $C_{\alpha,\beta}$ in
$S^d_{1,1}$ one has $C_{\alpha,\beta}(a_{\nu,\chi,\varepsilon})
\le \varepsilon^{-|\alpha|}C_{\alpha,\beta}(a_\nu)$ by 
Lemma~\ref{Heps-lem}, so by estimating the geometric series by its first
term, the above formula entails that
\begin{equation}
  C_{\alpha,\beta}(a_{\chi,\varepsilon})
  \le
  C_{\alpha,\beta}(A_{\chi,\varepsilon})
  +\sum_{\varepsilon 2^{\nu+1}>1-\varepsilon} 
    \frac{ C_{N+|\alpha|}}{\varepsilon^{|\alpha|} 2^{\nu (N+|\alpha|)}}
  \le   C_{\alpha,\beta}(A_{\chi,\varepsilon})+ \frac{c}{\varepsilon^{|\alpha|}}
  (\frac{2\varepsilon}{1-\varepsilon})^{N+|\alpha|}.    
\end{equation}
This gives the factor $\varepsilon^N$ in \eqref{orderN-cnd} for
$0<\varepsilon\le 1/2$. For $1/2<\varepsilon<1$ the series is 
$\cal O(\varepsilon^{-|\alpha|})$ because $2^{-\nu}\le 
1<2\varepsilon/(1-\varepsilon)$ for all $\nu$. However, 
$1\le (2\varepsilon)^{N+|\alpha|}$ for such $\varepsilon$,
so \eqref{orderN-cnd} will follow for all $\varepsilon\in
\,]0,1[\,$. (It is seen directly that $|A_{\chi,\varepsilon}(x,\eta)|
\le c\varepsilon^N(1+|\eta|)^d$ etc, for only the case $\varepsilon|\eta|\ge
1$ is non-trivial, and then $\varepsilon^{-N}\le (1+|\eta|)^N$ while $A\in
S^{-\infty }$.)

In the deduction of \eqref{aAnu-eq} one can use a 
Littlewood--Paley partition of unity, say $1=\sum_{\nu=0}^\infty \Phi_\nu$
with dilated functions $\Phi_\nu(\eta)=\Phi(2^{-\nu}\eta)\ne0$ 
only for $\tfrac{11}{20}2^{\nu}\le |\eta|\le 
\tfrac{13}{10}2^\nu$ if $\nu\ge 1$.
Beginning with a trivial split $a^*=A_0+A_1$ into two terms for which 
$A_0\in S^{-\infty}$ 
and $A_1\in S^d_{1,1}$ such that $A_1(x,\eta)=0$ for $|\eta|<1/2$, this gives
\begin{equation}
  \hat a{}^*(\xi,\eta)=\hat A_0(\xi,\eta)+\sum_{\nu=0}^\infty 
    \Phi_\nu(\xi/|\eta|)\hat A_1(\xi,\eta).
  \label{a*A-eq}
\end{equation}
This yields the desired $a_\nu(x,\eta)$ by taking the adjoint of
$\cal F^{-1}_{\xi\to x}(\Phi_\nu(\tfrac{\xi}{|\eta|})\hat A_1(\xi,\eta))$, that is, of the symbol
$\int |2^\nu\eta|^n\check \Phi(|2^\nu\eta|y)A_1(x-y,\eta)\,dy$.
Indeed, it follows directly from \cite[Prop.~3.3]{H88} (where the proof uses
Taylor expansion and vanishing moments of $\check \Phi$ for $\nu\ge 1$) that
$a^*_{\nu}$ belongs to
$S^d_{1,1}$ with $(2^{N\nu}a^*_\nu)_{\nu\in \N}$ bounded in
$S^d_{1,1}$ for all $N>0$. Therefore \eqref{a*A-eq} gives \eqref{aAnu-eq} by
inverse Fourier transformation. Moreover, 
$\hat a{}^*_\nu(\xi,\eta)$ is for $\nu\ge 1$ is supported by the region 
\begin{equation}
  \tfrac{11}{20}2^\nu|\eta|\le |\xi|\le \tfrac{13}{20}2^\nu |\eta|,
\end{equation}
where a fortiori $1+|\xi+\eta|\ge |\xi|-|\eta|\ge (\tfrac{11}{20}2^\nu-1)|\eta|\ge \tfrac{1}{10}|\eta|$,
so it is clear that
\begin{equation}
  \hat a{}^*_\nu(\xi,\eta)=0 \quad\text{ if }10(|\xi+\eta|+1)<|\eta|. 
\end{equation}
According to Lemma~\ref{GB-lem} this implies that $a_\nu=a^{**}_\nu$ is also in
$S^d_{1,1}$ and that, because of the above boundedness in $S^d_{1,1}$,
there is a constant $c$ independent of $\nu$ such that
\begin{equation}
  \begin{split}
  |D^\alpha_\eta D^\beta_x a_\nu(x,\eta)|
&\le 
  C_{\alpha,\beta}(a^*_\nu)10(1+10^{d-|\alpha|+|\beta|})
  (1+|\eta|)^{d-|\alpha|+|\beta|} 
\\
&\le 
  c2^{-N\nu}(1+|\eta|)^{d-|\alpha|+|\beta|}.
  \end{split}
\end{equation}
Therefore, the $a_\nu$ tend rapidly to $0$, which completes the proof of Theorem~\ref{a*-thm}.

\subsubsection{Consequences for the Self-Adjoint Subclass} \label{selfad-sssect}
One can set Theorem~\ref{a*-thm} in relation to the definition
by vanishing frequency modulation, simply by elaborating on the above proof:

\begin{cor}   \label{a*-cor}
On $\widetilde S^d_{1,1}(\Rn\times \Rn)$ 
the adjoint operation is stable with respect to vanishing
frequency modulation in the sense that, when $a\in \widetilde S^d_{1,1}$, 
$\psi\in C^\infty_0(\Rn)$ with $\psi=1$ around $0$, then
\begin{equation}
  \big(\psi(2^{-m}D_x)a(x,\eta)\psi(2^{-m}\eta) \big)^*
  \xrightarrow[m\to\infty ]{~}
  a(x,\eta)^*
\end{equation}
holds in the topology of $S^{d+1}_{1,1}(\Rn\times \Rn)$.
\end{cor}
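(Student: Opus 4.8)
The plan is to write $a_m(x,\eta):=\psi(2^{-m}D_x)a(x,\eta)\psi(2^{-m}\eta)$ and show that the adjoint symbols $a_m^{*}$ converge to $a^{*}$ in $S^{d+1}_{1,1}$. Since $a\in\widetilde S^{d}_{1,1}\subset\widetilde S^{d+1}_{1,1}$ and $a_m\in S^{-\infty}\subset\widetilde S^{d+1}_{1,1}$, the difference $r_m:=a_m-a$ again lies in $\widetilde S^{d+1}_{1,1}$, so Theorem~\ref{a*-thm}, applied with $d$ replaced by $d+1$ and in the quantitative form behind \eqref{a*-est}, gives
\[
  |D^\alpha_\eta D^\beta_x r_m^{*}(x,\eta)|\le
  \bigl(C_{\alpha,\beta}\bigl(r_m-(r_m)_{\chi,1}\bigr)+C'_{\alpha,\beta,N}(r_m)\bigr)
  (1+|\eta|)^{d+1-|\alpha|+|\beta|},
\]
where $C_{\alpha,\beta}$ is a continuous seminorm on $S^{d+1}_{1,1}$ and $C'_{\alpha,\beta,N}(r_m)$ is a finite sum of the constants in condition \eqref{orderN-cnd} for $r_m$. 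It therefore suffices to prove that both bracketed quantities tend to $0$ as $m\to\infty$. The first one does, because $a_m\to a$ in $S^{d+1}_{1,1}$ by the convolution estimate \cite[Lem.~2.1]{JJ08vfm}, hence $r_m\to0$ there, and $b\mapsto b_{\chi,1}$ is continuous on $S^{d+1}_{1,1}$ by Lemma~\ref{Heps-lem}.

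For the second quantity the key observation is the identity
\[
  (a_m)_{\chi,\varepsilon}(x,\eta)=\psi(2^{-m}\eta)\,\psi(2^{-m}D_x)\,a_{\chi,\varepsilon}(x,\eta),
\]
valid because both the cut-off $\chi(\xi+\eta,\varepsilon\eta)$ and the factor $\psi(2^{-m}\eta)$ act by multiplication on the partially Fourier transformed side, and $\psi(2^{-m}\eta)$ is independent of $\xi$. Consequently
\[
  (r_m)_{\chi,\varepsilon}=\bigl(\psi(2^{-m}\eta)-1\bigr)\psi(2^{-m}D_x)a_{\chi,\varepsilon}
   +\bigl(\psi(2^{-m}D_x)-1\bigr)a_{\chi,\varepsilon},
\]
and each summand is estimated by exploiting that $a\in\widetilde S^{d}_{1,1}$ supplies, for every $M$, the bound $|D^\alpha_\eta D^\beta_x a_{\chi,\varepsilon}(x,\eta)|\le C_{\alpha,\beta,M}\varepsilon^{M}(1+|\eta|)^{d-|\alpha|+|\beta|}$ (condition \eqref{orderN-cnd}). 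In the second summand one uses $|(\psi(2^{-m}D_x)-1)f|\le c\,2^{-m}\sup|\nabla_x f|$ (and the same after applying $D^\alpha_\eta D^\beta_x$, which commutes with $\psi(2^{-m}D_x)-1$); this trades one extra $x$-derivative of $a_{\chi,\varepsilon}$, i.e.\ one more power of $(1+|\eta|)$, against a factor $2^{-m}$. In the first summand, $\psi(2^{-m}\eta)-1$ is supported in $|\eta|\gtrsim 2^{m}$ and every $\eta$-derivative of $\psi(2^{-m}\eta)$ carries a factor $2^{-m|\gamma|}\lesssim(1+|\eta|)^{-|\gamma|}$ there; after applying $\psi(2^{-m}D_x)$ (which only costs the factor $\nrm{\cal F^{-1}\psi}{1}$) one is again on $|\eta|\gtrsim2^{m}$, where one factor $(1+|\eta|)^{-1}\lesssim 2^{-m}$ turns the exponent $d$ into $d+1$. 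Hence for all $0<\varepsilon<1$
\[
  |D^\alpha_\eta D^\beta_x (r_m)_{\chi,\varepsilon}(x,\eta)|\le
  c_{\alpha,\beta,M}\,2^{-m}\,\varepsilon^{M}(1+|\eta|)^{d+1-|\alpha|+|\beta|},
\]
so $r_m$ fulfils \eqref{orderN-cnd} at order $d+1$ with constants $\cal O(2^{-m})$; in particular $C'_{\alpha,\beta,N}(r_m)\to0$. Inserting the two limits into the displayed estimate for $r_m^{*}$ yields $a_m^{*}\to a^{*}$ in $S^{d+1}_{1,1}$, which is the assertion.

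The main obstacle is precisely this second quantity: one must see that the ``defect'' that frequency modulation introduces into the adjoint construction is concentrated near the twisted diagonal at frequencies $|\eta|\gtrsim2^{m}$ (or costs a derivative in $x$), so that the unavoidable loss of one order in passing from $S^{d}_{1,1}$ to $S^{d+1}_{1,1}$ is exactly compensated and $r_m$ becomes small in $S^{d+1}_{1,1}$. The identity $(a_m)_{\chi,\varepsilon}=\psi(2^{-m}\eta)\psi(2^{-m}D_x)a_{\chi,\varepsilon}$ is what makes this transparent, as it reduces everything to the rapid decay of $a_{\chi,\varepsilon}$ in $\varepsilon$ already guaranteed by $a\in\widetilde S^{d}_{1,1}$.
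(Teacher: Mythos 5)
Your proof is correct, and it is a genuinely different organisation of the argument than the paper's. The paper inserts $r_m=a-b_m$ into the dyadic sum behind \eqref{a*-est} and then appeals to \emph{majorised convergence}: for each fixed $\nu$ the seminorm $C_{\alpha,\beta}(e^m_{2^{-\nu}})\to0$ as $m\to\infty$ (via $b_m\to a$ in $S^{d+1}_{1,1}$ and the continuity of $a\mapsto a_{\chi,\varepsilon}$), and uniformly in $m$ the terms are dominated by $C2^{-\nu N}$ because $a\in\widetilde S^{d+1}_{1,1}$. You instead bypass the $\nu$-by-$\nu$ limit by proving the stronger quantitative statement that $r_m$ itself satisfies condition \eqref{orderN-cnd} at order $d+1$ with constants $\cal O(2^{-m})$, uniformly in $\varepsilon\in\,]0,1[\,$, so that the two bracketed quantities in the displayed estimate both tend to $0$ directly. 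Both proofs hinge on the same key commutation, namely $(a_m)_{\chi,\varepsilon}=\psi(2^{-m}\eta)\psi(2^{-m}D_x)a_{\chi,\varepsilon}$ (the paper phrases it as ``$a\mapsto a_{\chi,\varepsilon}$ commutes with $a\mapsto b_m$''), and on the rapid decay in $\varepsilon$ furnished by $a\in\widetilde S^d_{1,1}$. What your route adds is the explicit mechanism producing the loss of one order: one extra $x$-derivative from the mean-value bound for $\psi(2^{-m}D_x)-1$, or the spectral localisation $|\eta|\gtrsim2^m$ from the support of $\psi(2^{-m}\eta)-1$, each trade a factor $(1+|\eta|)$ for $2^{-m}$. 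This gives a clean convergence rate and avoids the dominated-convergence bookkeeping; the paper's version is slightly more economical in that it only reuses Lemma~\ref{Heps-lem} and the boundedness of $(b_m)$ rather than setting up the pointwise estimates for $(\psi(2^{-m}D_x)-1)f$. Your small side-steps are all sound: $r_m\in\widetilde S^{d+1}_{1,1}$ since $a_m\in S^{-\infty}$, the Leibniz terms with $\gamma\neq0$ work out because $D^\gamma_\eta\psi(2^{-m}\cdot)$ is supported in a corona $|\eta|\approx2^m$ where $2^{-m|\gamma|}\lesssim(1+|\eta|)^{-|\gamma|}$, and the bound $|(\psi(2^{-m}D_x)-1)f|\le c\,2^{-m}\sup|\nabla_x f|$ is valid since $\psi(0)=1$.
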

\begin{proof}
For brevity $b_m(x,\eta)=\psi(2^{-m}D_x)a(x,\eta)\psi(2^{-m}\eta)$ denotes
the symbol that is frequency modulated in both variables.
The proof consists in insertion of $a(x,\eta)-b_m(x,\eta)$ into \eqref{a*-est},
where the first sum tends to $0$ for $m\to\infty $ by majorised convergence. 

Note that for each $\nu\ge 0$ in the first sum of \eqref{a*-est} one must control
$C_{\alpha,\beta}(e^m_{2^{-\nu}})$ for $m\to\infty $ and
\begin{equation}
   \hat e{}^m_{2^{-\nu}}(\xi,\eta)= (\chi(\xi+\eta,2^{-\nu}\eta)-\chi(\xi+\eta,2^{-\nu-1}\eta))
  (1-\psi(2^{-m}\xi)\psi(2^{-m}\eta))\hat a(\xi,\eta).
\end{equation}
To do so, a convolution estimate first gives
$p_{\alpha,\beta}(b_m)\le c \sum_{\gamma\le \alpha}p_{\gamma,\beta}(a)$,
whence $(b_m)_{m\in\N}$ is bounded in $S^d_{1,1}$.
Similar arguments yield that $b_m\to a$ in $S^{d+1}_{1,1}$ for
$m\to\infty $; cf.\ \cite[Lem.~2.1]{JJ08vfm}. 
Moreover, for each $\nu\ge 0$, every seminorm $p_{\alpha,\beta}$ now on $S^{d+1}_{1,1}$, gives
\begin{equation}
  p_{\alpha,\beta}(e^m_{2^{-\nu}})
  \le p_{\alpha,\beta}((a-b_m)_{_{\chi,2^{-\nu}}})
     +p_{\alpha,\beta}((a-b_m)_{\chi,2^{-\nu-1}}).
\end{equation}
Here both terms on the right-hand side tend to $0$ for $m\to\infty $, in
view of the continuity of $a\mapsto a_{\chi,\varepsilon}$ on
$S^{d+1}_{1,1}$; cf.\ Lemma~\ref{Heps-lem}. 
Hence $C_{\alpha,\beta}(e^{m}_{2^{-\nu}})\to0$ for $m\to\infty $.

It therefore suffices to replace $d$ by $d+1$ in \eqref{a*-est} and
majorise. However, 
$a\mapsto a_{\chi,\varepsilon}$ commutes with $a\mapsto b_m$ as maps
in $\cal S'(\Rn\times \Rn)$, so since $a\in \widetilde S^{d+1}_{1,1}$, it
follows from \eqref{orderN-cnd} that
\begin{equation}
  p_{\alpha,\beta}((a-b_m)_{\chi,\varepsilon})
\le 
  p_{\alpha,\beta}(a_{\chi,\varepsilon})
 +c\sum_{\gamma\le \alpha} p_{\gamma,\beta}(a_{\chi,\varepsilon})
  \le 
  (1+c)(\sum_{\gamma\le \alpha}C_{\gamma,\beta,N})
  \varepsilon^N
\le 
  C'_{\alpha,\beta,N}\varepsilon^N.
\end{equation}
Using this in the previous inequality, $C_{\alpha,\beta}(e^{m}_{2^{-\nu}})\le C2^{-\nu N}$ 
is obtained for $C$ independent of $m\in \N$.  Now it follows from \eqref{a*-est}
that $b_m(x,\eta)^*\to a(x,\eta)^*$ in $S^{d+1}_{1,1}$ as desired.
\end{proof}

Thus prepared, the proof of Proposition~\ref{GB-prop} can now be repeated
from \eqref{*conv-eq} onwards, which immediately gives the first main
result of the paper:

\begin{thm}   \label{tildeS-thm}
When a symbol $a(x,\eta)$ of type $1,1$ belongs to the class $\tilde
S^d_{1,1}(\Rn\times \Rn)$, as characterised in Theorem~\ref{a*-thm}, then
$a(x,D)$ is everywhere defined and continuous
\begin{equation} \label{aS'S'-eq}
  a(x,D)\colon \cal S'(\Rn)\to \cal S'(\Rn)
\end{equation}
It equals the adjoint of 
$\OP(e^{\im D_x\cdot D_\eta}\bar a(x,\eta))\colon\cal S\to\cal S$.
\end{thm}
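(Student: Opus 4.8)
The plan is to identify $a(x,D)$, defined by vanishing frequency modulation in Definition~\ref{a11-defn}, with the transpose of the Schwartz-space operator carrying the adjoint symbol $b(x,\eta)=e^{\im D_x\cdot D_\eta}\overline{a(x,\eta)}=a^*(x,\eta)$. Since $a$ lies in $\widetilde S^d_{1,1}$, cf.\ \eqref{Stilde-id}, Theorem~\ref{a*-thm} gives $b\in S^d_{1,1}(\Rn\times\Rn)$, so by \eqref{SSS-eq} the operator $b(x,D)\colon\cal S(\Rn)\to\cal S(\Rn)$ is continuous and admits the continuous transpose $b(x,D)^*\colon\cal S'(\Rn)\to\cal S'(\Rn)$ (for the strong, hence also the weak, topology, $\cal S$ being a Montel space). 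It thus suffices to prove $a(x,D)=b(x,D)^*$ in the sense of Definition~\ref{a11-defn}; the continuity in \eqref{aS'S'-eq} and the identification with the adjoint of $\OP(e^{\im D_x\cdot D_\eta}\overline{a(x,\eta)})$ then follow at once.

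For the identification I would repeat the proof of Proposition~\ref{GB-prop} starting from \eqref{*conv-eq}. Fix $u\in\cal S'(\Rn)$, $\varphi\in\cal S(\Rn)$ and a modulation function $\psi\in C^\infty_0(\Rn)$, $\psi=1$ near $0$. Write $c_m(x,\eta)=\psi(2^{-m}D_x)a(x,\eta)\psi(2^{-m}\eta)\in S^{-\infty}$ and $b_m=c_m^*$. The one substantial ingredient is Corollary~\ref{a*-cor}, by which $b_m\to b$ in $S^{d+1}_{1,1}(\Rn\times\Rn)$; together with continuity of $(c,\varphi)\mapsto c(x,D)\varphi$ from $S^{d+1}_{1,1}\times\cal S$ into $\cal S$, i.e.\ \eqref{SSS-eq} at order $d+1$, this yields $\OP(b_m)\varphi\to b(x,D)\varphi$ in $\cal S(\Rn)$. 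Using the elementary duality $\scal{\OP(c_m)u}{\varphi}=\scal{u}{\OP(c_m^*)\varphi}$, valid for $S^{-\infty}$-symbols, and letting $m\to\infty$,
\[
  \lim_{m\to\infty}\scal{\OP(c_m)u}{\varphi}=\lim_{m\to\infty}\scal{u}{\OP(b_m)\varphi}=\scal{u}{b(x,D)\varphi}=\scal{b(x,D)^*u}{\varphi}.
\]
As this holds for every $\varphi\in\cal S(\Rn)\supset C^\infty_0(\Rn)$, the limit in \eqref{aPsi-eq} exists in $\cal D'(\Rn)$, equals $b(x,D)^*u$, and is visibly independent of $\psi$; by Definition~\ref{a11-defn}, $u\in D(a(x,D))$ and $a(x,D)u=b(x,D)^*u$.

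Since $u\in\cal S'(\Rn)$ was arbitrary, $a(x,D)=b(x,D)^*$ everywhere, which gives all assertions of the theorem. I do not expect a genuine obstacle beyond Corollary~\ref{a*-cor}: the delicate point\,---\,the $S^{d+1}_{1,1}$-stability of $a\mapsto a^*$ under vanishing frequency modulation, which rests on the characterisation in Theorem~\ref{a*-thm}\,---\,has already been isolated there, so what remains is a direct transcription of the final part of the proof of Proposition~\ref{GB-prop}.
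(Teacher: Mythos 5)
Your proof is correct and follows exactly the paper's own route: the paper derives Theorem~\ref{tildeS-thm} by noting that Corollary~\ref{a*-cor} supplies the convergence $b_m\to a^*$ in $S^{d+1}_{1,1}$, after which the proof of Proposition~\ref{GB-prop} is repeated verbatim from \eqref{*conv-eq} onwards. You have identified precisely that Corollary~\ref{a*-cor} is the single new ingredient and carried out the same duality limit argument, so there is nothing to add.
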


Like for Proposition~\ref{GB-prop}, there seems to be no previous attempts in the literature to
obtain this clarification (Theorem~\ref{tildeS-thm} was stated without proof in \cite{JJ11lp}). 
However, it seems to be open whether \eqref{aS'S'-eq} conversely implies that $a\in\tilde S^d_{1,1}$.

\section{Dyadic Corona Decompositions}   \label{corona-sect}

This section adopts Littlewood--Paley techniques to provide a 
passage to auxiliary operators $a^{(j)}(x,D)$, $j=1,2,3$, 
which may be easily analysed with the pointwise estimates of Section~\ref{pe-sect}.

\subsection{The Paradifferential Splitting}
Recalling the definition of type $1,1$-operators in
\eqref{aPsi-eq} and \eqref{aPsi'-eq}, 
it is noted that to each modulation function $\psi$, 
i.e.\ $\psi\in C^\infty_0(\Rn)$ with
$\psi=1$ in a neighbourhood of $0$, there exist $R>r>0$ with $R\ge 1$ satisfying
\begin{equation}
  \psi(\xi)=1\quad\text{for}\quad |\xi|\le r;
  \qquad
  \psi(\xi)=0\quad\text{for}\quad |\xi|\ge R.
  \label{Rr-eq}
\end{equation}
For fixed $\psi$ it is convenient to 
take an integer $h\ge 2$ so large that $2R< r2^h$.

To obtain a Littlewood--Paley decomposition from $\psi$, set 
$\varphi=\psi-\psi(2\cdot )$. Then a dilation of this function is
supported in a corona,
\begin{equation}
  \supp \varphi(2^{-k}\cdot )\subset \bigl\{\,\xi \bigm| 
   r2^{k-1}\le|\xi|\le R2^k\,\bigr\},
  \qquad \text{for }k\ge 1.
  \label{phi-eq}
\end{equation}
The identity $1=\psi(x)+\sum_{k=1}^\infty \varphi(2^{-k}\xi)$ 
follows by letting $m\to\infty $ in the telescopic sum,
\begin{equation}
  \psi(2^{-m}\xi)=\psi(\xi)+\varphi(\xi/2)+\dots+\varphi(\xi/2^m).
  \label{tele-eq}
\end{equation}

Using this, functions $u(x)$ and symbols $a(x,\eta)$ will be localised to
frequencies $|\eta|\approx 2^j$ as
\begin{equation}
  u_j=\varphi(2^{-j}D)u, \qquad
  a_j(x,\eta)=\varphi(2^{-j}D_x)a(x,\eta).
  \label{ua_j-eq}
\end{equation}
Localisation to balls given by $|\eta|\le R2^j$ are written with upper indices,
\begin{equation}
  u^j=\psi(2^{-j}D)u, \qquad
  a^j(x,\eta)=\psi(2^{-j}D_x)a(x,\eta).
  \label{ua^j-eq}
\end{equation}

In addition $u_0=u^0$ and $a_0=a^0$; as an \emph{index convention} they are all
taken $\equiv0$ for $j<0$. (To avoid having two different meanings of sub-
and superscripts, the dilations $\psi(2^{-j}\cdot  )$ are written as such,
with the corresponding Fourier multiplier as $\psi(2^{-j}D)$, and similarly
for $\varphi$). Note that the corresponding operators are $a^k(x,D)=\OP(\psi(2^{-k}D_x)a(x,\eta))$ etc.

Inserting the relation \eqref{tele-eq} twice in \eqref{aPsi-eq}, bilinearity gives
\begin{equation}
  \OP(\psi(2^{-m}D_x)a(x,\eta)\psi(2^{-m}\eta))u
= \sum_{j,k=0}^m   a_j(x,D)u_k.
  \label{bilin-eq}
\end{equation}
Of course the sum may be split in three groups having $j\le k-h$,
$|j-k|<h$ and $k\le  j-h$. For $m\to\infty $ this yields the well-known paradifferential decomposition
\begin{equation}
  a_{\psi}(x,D)u=
  a_{\psi}^{(1)}(x,D)u+a_{\psi}^{(2)}(x,D)u+a_{\psi}^{(3)}(x,D)u,
  \label{a123-eq}
\end{equation}
whenever $a$ and $u$ fit together such that the three series below converge in $\cal D'(\Rn)$:
\begin{align}
    a_{\psi}^{(1)}(x,D)u&=\sum_{k=h}^\infty \sum_{j\le k-h} a_j(x,D)u_k
  =\sum_{k=h}^\infty a^{k-h}(x,D)u_k
  \label{a1-eq}\\
  a_{\psi}^{(2)}(x,D)u&= \sum_{k=0}^\infty
               \bigl(a_{k-h+1}(x,D)u_k+\dots+a_{k-1}(x,D)u_k+a_{k}(x,D)u_k
\notag\\[-2\jot]
   &\qquad\qquad
                +a_{k}(x,D)u_{k-1} +\dots+a_k(x,D)u_{k-h+1}\bigr) 
  \label{a2-eq}\\
   a_{\psi}^{(3)}(x,D)u&=\sum_{j=h}^\infty\sum_{k\le j-h}a_j(x,D)u_k
   =\sum_{j=h}^\infty a_j(x,D)u^{j-h}.
  \label{a3-eq}
\end{align}
Note the shorthand $a^{k-h}(x,D)$ for $\sum_{j\le k-h}a_j(x,D)=\op{OP}(\psi(2^{h-k}D_x)a(x,\eta))$ etc.
Using this and the index convention, the so-called symmetric term in \eqref{a2-eq} has the brief form
\begin{equation}
 a_{\psi}^{(2)}(x,D)u=\sum_{k=0}^\infty
((a^{k}-a^{k-h})(x,D)u_k+a_k(x,D)(u^{k-1}-u^{k-h})). 
\label{a2-eq'}
\end{equation}

In the following the subscript $\psi$ is usually dropped because this
auxiliary function will be fixed ($\psi$ was left out already in $a_j$ and
$a^j$; cf.\ \eqref{ua_j-eq}--\eqref{ua^j-eq}). 
Note also that the above $a^{(j)}(x,D)$ for now is just a
convenient notation for the infinite series. The full justification of
this operator notation will first result from Theorems~\ref{a123-thm}--\ref{a2a-thm} below.

\begin{rem}   \label{cutoff-rem}
It was tacitly used in \eqref{bilin-eq} and \eqref{a1-eq}--\eqref{a3-eq}
that one has
\begin{equation}
  a_j(x,D)u_k=\OP(a_j(x,\eta)\varphi(2^{-k}\eta))u.
  \label{ajuk-eq}
\end{equation}
This is because, with $\chi\in C^\infty_0$ equalling $1$
on $\supp \varphi(2^{-k}\cdot)$, both sides are equal to
\begin{equation}
  \OP(a_j(x,\eta)\chi(\eta))u_k.
\end{equation}
Indeed, while this is trivial for the right-hand side of \eqref{ajuk-eq},
where the symbol is in $S^{-\infty }$,
it is for the type $1,1$-operator on the left-hand side of \eqref{ajuk-eq} a fact that
follows at once from \eqref{aFE-eq}.
Thus the inclusion $\cal F^{-1}\cal E'\subset D(a(x,D))$ in
\eqref{aFE-eq} is crucial for the simple formulae in the present paper.
Analogously Definition~\ref{a11-defn} may be rewritten briefly as $a(x,D)u=\lim_m a^m(x,D)u^m$.
\end{rem}

The importance of the decomposition in \eqref{a1-eq}--\eqref{a3-eq} 
lies in the fact that the summands have localised spectra. E.g.\ there is a dyadic corona property:

\begin{prop}  \label{corona-prop}
If $a\in S^d_{1,1}(\Rn\times \Rn)$ and $u\in \cal S'(\Rn)$, and 
$r$, $R$ are chosen as in \eqref{Rr-eq} for each auxiliary function $\psi$,
then every $h\in \N$ such that $2R< r2^h$ gives
\begin{align}
  \supp\cal F(a^{k-h}(x,D)u_k)&\subset
  \bigl\{\,\xi\bigm| 
  R_h2^k\le|\xi|\le \frac{5R}{4} 2^k\,\bigr\}
  \label{supp1-eq}  \\
  \supp\cal F(a_k(x,D)u^{k-h})&\subset
  \bigl\{\,\xi \bigm| 
  R_h2^k\le|\xi|\le \frac{5R}{4} 2^k\,\bigr\},
  \label{supp3-eq}
\end{align}
whereby $R_h=\tfrac{r}{2}-R2^{-h}>0$.
\end{prop}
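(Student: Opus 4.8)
The plan is to derive both inclusions straight from the Spectral Support Rule \eqref{sFAu-eq}, after recording the spectra of the four pieces that occur. First I would observe that $u_k=\varphi(2^{-k}D)u$ and $u^{k-h}=\psi(2^{h-k}D)u$, being obtained from $u\in\cal S'$ by Fourier multipliers with compactly supported symbols, lie in $\cal F^{-1}\cal E'(\Rn)$; by \eqref{phi-eq} and \eqref{Rr-eq},
\begin{equation}
  \supp\hat u_k\subset\{\,\eta\mid r2^{k-1}\le|\eta|\le R2^k\,\},
  \qquad
  \supp\widehat{u^{k-h}}\subset\{\,\eta\mid|\eta|\le R2^{k-h}\,\}.
\end{equation}
On the symbol side, $a^{k-h}(x,\eta)$ has partial Fourier transform $\psi(2^{h-k}\xi)\hat a(\xi,\eta)$, which vanishes for $|\xi|>R2^{k-h}$, while $a_k(x,\eta)$ has partial Fourier transform $\varphi(2^{-k}\xi)\hat a(\xi,\eta)$, supported where $r2^{k-1}\le|\xi|\le R2^k$; both are again of type $1,1$ and order $d$, since convolution in $x$ by an $L_1$-function preserves the seminorms \eqref{pab-eq}. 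Because $u_k,u^{k-h}\in\cal F^{-1}\cal E'\subset D(a(x,D))$ for every type $1,1$-operator, cf.\ \eqref{aFE-eq}, the distributions $a^{k-h}(x,D)u_k$ and $a_k(x,D)u^{k-h}$ are defined and the Spectral Support Rule applies in the form \eqref{sFAu-eq}, which holds for $\hat u\in\cal E'$ by Appendix~\ref{spectral-app}.

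Next I would feed these supports into \eqref{sFAu-eq}. For \eqref{supp1-eq} this gives
\begin{equation}
  \supp\cal F(a^{k-h}(x,D)u_k)\subset
  \{\,\xi+\eta\mid |\xi|\le R2^{k-h},\ r2^{k-1}\le|\eta|\le R2^k\,\},
\end{equation}
where the closure bar is superfluous because the right-hand side is the image of a compact set under addition. For $\zeta=\xi+\eta$ in this set one has $|\zeta|\le|\xi|+|\eta|\le R2^{k-h}+R2^k$ and $|\zeta|\ge|\eta|-|\xi|\ge r2^{k-1}-R2^{k-h}$. The hypothesis $2R<r2^h$ forces $2^h>2$, hence $2^{-h}\le\tfrac14$, so $R2^{k-h}+R2^k=2^k(R2^{-h}+R)\le\tfrac54R2^k$; and $2R<r2^h$ also gives $R2^{-h}<r/2$, whence $r2^{k-1}-R2^{k-h}=2^k(\tfrac r2-R2^{-h})=R_h2^k>0$. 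This proves \eqref{supp1-eq}; for $k<h$ the left-hand side is $0$ by the index convention, so nothing is claimed. The inclusion \eqref{supp3-eq} follows by the mirror-image computation: \eqref{sFAu-eq} now yields
\begin{equation}
  \supp\cal F(a_k(x,D)u^{k-h})\subset
  \{\,\xi+\eta\mid r2^{k-1}\le|\xi|\le R2^k,\ |\eta|\le R2^{k-h}\,\},
\end{equation}
and the same estimates, with $\xi$ and $\eta$ interchanged, give $R_h2^k\le|\xi+\eta|\le\tfrac54R2^k$.

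The whole argument is bookkeeping with supports, so no step is a real obstacle; the one point that needs a little attention is confirming that the multipliers $\psi(2^{h-k}D_x)$ and $\varphi(2^{-k}D_x)$ leave the symbol of type $1,1$ and order $d$ (equivalently, that $u_k$ and $u^{k-h}$ have compact spectrum and lie in the domain), since that is precisely what justifies invoking the Spectral Support Rule \eqref{sFAu-eq} here.
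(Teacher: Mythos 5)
Your proof is correct and follows essentially the same route as the paper's: record the spectra of $u_k$, $u^{k-h}$, $a^{k-h}$, $a_k$, apply the Spectral Support Rule \eqref{sFAu-eq} (via Theorem~\ref{supp-thm}), and finish with the triangle inequality using $2R<r2^h$. The only differences are cosmetic — you explicitly observe that the closure bar in \eqref{sFAu-eq} is superfluous here and that the case $k<h$ is vacuous by the index convention, points the paper leaves implicit.
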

\begin{proof}
By \eqref{phi-eq} and the Spectral Support Rule, cf.\ the last part of Theorem~\ref{supp-thm},
\begin{equation}
  \supp\cal F(a^{k-h}(x,D)u_k)\subset
  \bigl\{\,\xi+\eta \bigm| (\xi,\eta)\in \supp
    (\psi_{h-k}\otimes1)\hat a, 
\
   r2^{k-1}\le |\eta|\le R2^k \,\bigr\}.
\end{equation}
So by the triangle inequality every $\zeta=\xi+\eta$ in the support fulfils,
as $h\ge 2$,
\begin{equation}
  r2^{k-1}-R2^{k-h}
 \le|\zeta|\le R2^{k-h}+R2^k\le \tfrac{5}{4}R2^k.
\end{equation}
This shows \eqref{supp1-eq}, and \eqref{supp3-eq} follows analogously.
\end{proof}

To achieve simpler constants one could take $h$ so large that
$4R\le r2^h$, which instead of $R_h$ would allow $r/4$ (and $9R/8$). 
But the present choice of $h$ is preferred in order to reduce the number
of terms in $a^{(2)}(x,D)u$.

In comparison the terms in $a^{(2)}(x,D)u$ only satisfy a dyadic ball
condition. Previously this was observed e.g.\ for functions $u\in \bigcup H^s$
in \cite{JJ05DTL}, as was the fact that when
the twisted diagonal condition \eqref{tdc-cnd} holds, 
then the situation improves for large $k$. This is true for arbitrary $u$:

\begin{prop}   \label{ball-prop}
When $a\in S^d_{1,1}(\Rn\times \Rn)$, $u\in \cal S'(\Rn)$, and 
$r$, $R$ are chosen as in \eqref{Rr-eq} for each auxiliary function $\psi$,
then every $h\in \N$ such that $2R< r2^h$ gives
\begin{equation}
  \begin{split}
  \supp\cal F\big(a_k(x,D)(u^{k-1}-u^{k-h})\big)
  &\bigcup
  \supp\cal F\big((a^k-a^{k-h})(x,D)u_k\big)
\\
  \subset
  \bigl\{\,\xi\in\Rn &\bigm| |\xi|\le 2R 2^k\,\bigr\}  
  \end{split}
  \label{supp2-eq}
\end{equation}
If \eqref{tdc-cnd} holds for some $B\ge 1$, 
then the support is contained in the annulus
\begin{equation}
  \bigl\{\,\xi \bigm| \frac{r}{2^{h+1}B} 2^k \le |\xi|\le 2R2^k\,\bigr\}
\quad\text{for all $k\ge h+1+\log_2(\frac Br)$.}
  \label{supp2'-eq}
\end{equation}
\end{prop}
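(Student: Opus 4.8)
The plan is to derive both inclusions from the Spectral Support Rule (the last part of Theorem~\ref{supp-thm}), exactly as in the proof of Proposition~\ref{corona-prop}, after recording the frequency localisations of the symbols and the functions occurring in \eqref{supp2-eq}.

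First I would note that $\psi(2^{-(k-h)}D_x)=\psi(2^{h-k}D_x)$ leaves $a$ unchanged near $\xi=0$, namely $\psi(2^{h-k}\xi)=1$ for $|\xi|\le r2^{k-h}$, while $\psi(2^{-k}\xi)=0$ for $|\xi|\ge R2^k$; hence
\begin{equation}
  \cal F_{x\to\xi}\bigl((a^k-a^{k-h})(x,\eta)\bigr)
  =\bigl(\psi(2^{-k}\xi)-\psi(2^{h-k}\xi)\bigr)\hat a(\xi,\eta)
\end{equation}
vanishes unless $r2^{k-h}\le|\xi|\le R2^k$, and likewise $\supp\cal F(u^{k-1}-u^{k-h})\subset\set{\eta}{r2^{k-h}\le|\eta|\le R2^{k-1}}$. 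By \eqref{phi-eq}, both $a_k$ and $u_k$ have their spectra in the corona $r2^{k-1}\le|\cdot|\le R2^k$. Since $u_k$ and $u^{k-1}-u^{k-h}$ lie in $\cal F^{-1}\cal E'$, the Spectral Support Rule applies to both operators and shows that $\supp\cal F(a_k(x,D)(u^{k-1}-u^{k-h}))$ and $\supp\cal F((a^k-a^{k-h})(x,D)u_k)$ consist of points $\zeta=\xi+\eta$ with $(\xi,\eta)\in\supp\cal F_{x\to\xi}a$ restricted to the regions just described.

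For the upper bound of \eqref{supp2-eq} the triangle inequality is enough: in both cases $|\xi|\le R2^k$ and $|\eta|\le R2^k$, so $|\zeta|\le 2R2^k$ (in fact $|\zeta|\le\tfrac32 R2^k$ for the first term). For the improvement \eqref{supp2'-eq} I would use that $\hat a(\xi,\eta)$ occurs as a \emph{factor} in both $\cal F_{x\to\xi}a_k$ and $\cal F_{x\to\xi}(a^k-a^{k-h})$; hence the twisted diagonal condition \eqref{tdc-cnd} forces these symbols to vanish unless $B(1+|\xi+\eta|)\ge|\eta|$. On the supports above one has $|\eta|\ge r2^{k-h}$ --- this is the worst case, coming from $u^{k-1}-u^{k-h}$; for $u_k$ one even has $|\eta|\ge r2^{k-1}$ --- so every $\zeta$ in the support satisfies $|\zeta|\ge|\eta|/B-1\ge r2^{k-h}/B-1$. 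For $k\ge h+1+\log_2(B/r)$ we have $r2^{k-h-1}\ge B$, whence
\begin{equation}
  |\zeta|\ge\frac{r2^{k-h}}{B}-1\ge\frac{r2^{k-h}}{B}-\frac{r2^{k-h-1}}{B}
  =\frac{r2^{k-h-1}}{B}=\frac{r}{2^{h+1}B}2^k ,
\end{equation}
which is the stated lower bound in \eqref{supp2'-eq}.

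The computation is essentially bookkeeping; the only point requiring a little care is the combination of the Spectral Support Rule with \eqref{tdc-cnd}, i.e.\ the observation that the $x$-localised symbols $a_k$ and $a^k-a^{k-h}$ inherit H{\"o}rmander's twisted diagonal condition because $\hat a$ survives as a multiplicative factor, together with keeping track that the minimal value of $|\eta|$ over the two relevant supports equals $r2^{k-h}$ --- which is precisely what makes the threshold come out as $k\ge h+1+\log_2(B/r)$.
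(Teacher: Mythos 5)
Your proof is correct and follows essentially the same route as the paper: both derive the spectral inclusions from the Spectral Support Rule (Theorem~\ref{supp-thm}), apply the triangle inequality for the outer bound, and use the twisted diagonal condition $B(1+|\xi+\eta|)\ge|\eta|$ on $\supp\hat a$ together with the lower bound $|\eta|\ge r2^{k-h}$ to obtain the annulus, with the same arithmetic giving the threshold $k\ge h+1+\log_2(B/r)$. Your side remark that $a_k$ and $a^k-a^{k-h}$ inherit the twisted diagonal condition because $\hat a$ appears as a factor makes explicit a point the paper treats implicitly, but this is a presentational difference, not a different argument.
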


\begin{proof}
As in Proposition~\ref{corona-prop},
$\supp\cal Fa_k(x,D)(u^{k-1}-u^{k-h})$ is seen to be a subset of
\begin{equation}
  \bigl\{\,\xi+\eta \bigm| (\xi,\eta)\in \supp(\varphi_k\otimes 1)
\hat a,\ r2^{k-h}\le |\eta|\le R2^{k-1}\,\bigr\}.
\end{equation}
Thence any $\zeta$ in the support fulfils $|\zeta|\le R2^k+R2^{k-1}=(3R/2)2^k$.
If \eqref{tdc-cnd} holds, then one has $B(1+|\xi+\eta|)\ge |\eta|$ on
$\supp\cal F_{x\to\xi}a$, so for all $k$ larger than the given limit
\begin{equation}
  |\zeta|\ge \tfrac{1}{B}|\eta|-1\ge \tfrac{1}{B}r2^{k-h}-1\ge 
  (\tfrac{r}{2^hB}-2^{-k})2^k\ge \tfrac{r}{2^{h+1}B}2^k.
\end{equation}

The term $(a^k-a^{k-h})(x,D)u_k$ is analogous but will cause $3R/2$ to be replaced by $2R$.
\end{proof}

\begin{rem}
  \label{a123-rem}
The inclusions in Propositions~\ref{corona-prop} and \ref{ball-prop} have been a main reason for
the introduction of the paradifferential splitting \eqref{a123-eq}
in the 1980's, but they were then only derived for elementary symbols; cf.\
\cite{Bou83,Bou88,Y1}. With the Spectral Support Rule, cf.\ Theorem~\ref{supp-thm}, 
this restriction is redundant; cf.\ also the remarks to \eqref{sFAu-eq} in the introduction.
\end{rem}

\subsection{Polynomial Bounds}   
  \label{poly-ssect}
In the treatment of $a^{(1)}(x,D)u$ and $a^{(3)}(x,D)u$ 
in \eqref{a1-eq} and \eqref{a3-eq}
one may conveniently commence by observing that, according to
Proposition~\ref{corona-prop}, the terms in
these series fulfil
condition \eqref{DAC-cnd} in Lemma~\ref{corona-lem} for
$\theta_0=\theta_1=1$.

So to deduce their convergence from Lemma~\ref{corona-lem},
it remains to obtain the polynomial bounds in \eqref{CM-cnd}.
For this it is natural to use the efficacy of the pointwise estimates
in Section~\ref{pe-sect}:

\begin{prop}   \label{pe13-prop}
If $a(x,\eta)$ is in $S^d_{1,1}(\Rn\times \Rn)$ and 
$N\ge \order_{\cal S'}(\cal Fu)$ fulfils $d+N\ne0$, then 
\begin{align}
  |a^{k-h}(x,D)u_k(x)|&\le  c 2^{k(N+d)}(1+|x|)^N,
  \label{a1pe-eq}  \\
  |a_k(x,D)u^{k-h}(x)|&\le c 2^{k(N+d)_+}(1+|x|)^N,
  \label{a3pe-eq}\\
  |(a^{k}-a^{k-h})(x,D)u_k(x)| &\le c2^{k(N+d)}(1+|x|)^{N},
  \label{a2pe'-eq}\\
  |a_k(x,D)(u^{k-1}-u^{k-h})(x)|
  &\le c2^{k(N+d)}(1+|x|)^{N}.
  \label{a2pe-eq}
\end{align}
\end{prop}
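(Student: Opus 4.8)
The plan is to reduce all four estimates to Proposition~\ref{cutoff-prop} by rewriting each of the four operators, applied to $u$, as a single frequency modulated operator $\OP\big(\Phi(2^{-k}D_x)a(x,\eta)\Psi(2^{-k}\eta)\big)u$ with suitable fixed $\Phi,\Psi\in C^\infty_0(\Rn)$. This rewriting is legitimate because $u_k$, $u^{k-h}$ and $u^{k-1}-u^{k-h}$ all belong to $\cal F^{-1}\cal E'(\Rn)\subset D(a(x,D))$, so the argument behind \eqref{ajuk-eq} in Remark~\ref{cutoff-rem} (which rests on \eqref{aFE-eq}) applies; here $h$ is the fixed integer with $2R<r2^h$, so the dilated cut-offs $\psi(2^h\cdot)$ etc.\ are fixed functions and all constants below will be independent of $k$.

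Concretely, using $a^{k-h}(x,D)=\OP(\psi(2^{h-k}D_x)a(x,\eta))$, $a_k(x,D)=\OP(\varphi(2^{-k}D_x)a(x,\eta))$ and $u^{j}=\psi(2^{-j}D)u$, I would take $\Phi=\psi(2^h\cdot)$, $\Psi=\varphi$ for \eqref{a1pe-eq}; $\Phi=\varphi$, $\Psi=\psi(2^h\cdot)$ for \eqref{a3pe-eq}; $\Phi=\psi-\psi(2^h\cdot)$, $\Psi=\varphi$ for \eqref{a2pe'-eq}, using that $(a^k-a^{k-h})(x,D)=\OP\big((\psi(2^{-k}D_x)-\psi(2^{h-k}D_x))a(x,\eta)\big)$; and $\Phi=\varphi$, $\Psi=\psi(2\cdot)-\psi(2^h\cdot)$ for \eqref{a2pe-eq}, since $u^{k-1}-u^{k-h}=(\psi(2^{1-k}D)-\psi(2^{h-k}D))u$. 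In each case $\supp\Psi\subset\Bbar(0,R)$ with $R\ge1$ and $\Psi$ is constant near the origin, so Proposition~\ref{cutoff-prop} applies with $v=u$ and any $N\ge\order_{\cal S'}(\cal Fu)$, yielding the factor $(1+|x|)^N$ and the constant $C(k)$ given there.

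The sharp exponent $k(N+d)$ in \eqref{a1pe-eq}, \eqref{a2pe'-eq} and \eqref{a2pe-eq} comes from the refinement ``$0\notin\supp\Psi$'' in Proposition~\ref{cutoff-prop}: for $\Psi\in\{\,\varphi,\ \psi(2\cdot)-\psi(2^h\cdot)\,\}$ one has $\Psi\equiv0$ near $0$, because $\psi\equiv1$ on $\{\,|\xi|\le r\,\}$ while $r2^{-h}<r/2<r$ (here $h\ge2$); hence $C(k)=c\,2^{k(N+d)}$ for every value of $N+d$. For \eqref{a3pe-eq} the modulation function $\Psi=\psi(2^h\cdot)$ equals $1$ near $0$, so Proposition~\ref{cutoff-prop} only gives $C(k)=c\,2^{k(N+d)_+}$; this is the asymmetry recorded in the statement, and the standing hypothesis $d+N\ne0$ is exactly what secures this form (for $d+N=0$ one would get $C(k)=ck$ instead). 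Finally, for the finitely many $k<h$ the index convention makes $a^{k-h}$, $u^{k-h}$ vanish, so \eqref{a2pe'-eq}--\eqref{a2pe-eq} reduce to bounds on $a^k(x,D)u_k$ and $a_k(x,D)u^{k-1}$, obtained in the same way; any resulting $k$-independent constant is absorbed into $c\,2^{k(N+d)}$ since this quantity is bounded away from $0$ on $0\le k<h$. No new estimate is needed; the only work is the bookkeeping of matching each term to the right variant of Proposition~\ref{cutoff-prop}, which I expect to be the main, if routine, obstacle.
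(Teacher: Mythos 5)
Your proof is essentially identical to the paper's: both reduce each of the four estimates to Proposition~\ref{cutoff-prop} by identifying the appropriate fixed cut-offs $\Phi,\Psi$ (scaled by $2^{-k}$), and both observe that $\Psi$ vanishes near the origin in all cases but \eqref{a3pe-eq}, which is why only that estimate carries the positive part $(N+d)_+$. The only discrepancy is notational: where you correctly take $\Phi$ or $\Psi$ to be $\psi(2^h\cdot)$ (so that $\Phi(2^{-k}\xi)=\psi(2^{h-k}\xi)$ reproduces $a^{k-h}$ or $u^{k-h}$), the paper's proof writes $\psi(2^{-h}\cdot)$, which appears to be a sign slip in the exponent; your bookkeeping is the right one.
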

\begin{proof}
The second inequality follows by taking 
the two cut-off functions in Proposition~\ref{cutoff-prop} as
$\Phi=\varphi$ and $\Psi=\psi(2^{-h}\cdot )$.
The first claim is seen by interchanging
their roles, i.e.\ for $\Phi=\psi(2^{-h}\cdot)$ and $\Psi=\varphi$;
the latter is $0$ around the origin so $N+d$ is obtained without the
positive part.

Clearly similar estimates hold for the terms in $a^{(2)}(x,D)u$. E.g.,
taking $\psi-\psi(2^{-h}\cdot )$ and $\varphi$, respectively, as the cut-off
functions in Proposition~\ref{cutoff-prop}, one finds for $k\ge h$
the estimate in \eqref{a2pe'-eq}. Note that the positive part can be
avoided for $0\le k< h$ by using a sufficiently large constant.
\end{proof}

The difference in the above estimates appears because $u_k$ in
\eqref{a1pe-eq} has spectrum in a corona.
However, one should not confound this with spectral inclusions like
\eqref{DAC-cnd} that one might obtain after application of 
$a^{k-h}(x,D)$, for these are
irrelevant for the pointwise estimates here.

\subsection{Induced Paradifferential Operators} \label{para-ssect}
Although 
\eqref{a1-eq}--\eqref{a3-eq} yield a well-known splitting,
the operator notation $a^{(j)}(x,D)$ requires justification
in case of type $1,1$-operators.

Departing from the right hand sides of \eqref{a1-eq}--\eqref{a3-eq} one is
via \eqref{ajuk-eq} led directly to the symbols
\begin{align}
    a^{(1)}(x,\eta)&=\sum_{k=h}^\infty a^{k-h}(x,\eta)\varphi(2^{-k}\eta)
  \label{a1'-eq}
\\
   a^{(3)}(x,\eta)& =\sum_{j=h}^\infty a_j(x,\eta)\psi(2^{-(j-h)}\eta).
  \label{a3'-eq}
\end{align}
In addition, letting
$\delta_{k\ge h}$ stand for $1$ when $k\ge h$ and for $0$ in case $k<h$,
\begin{align}
  a^{(2)}(x,\eta)&=\sum_{k=1}^\infty
        \big((a^{k}(x,\eta)-a^{k-h}(x,\eta))\varphi(2^{-k}\eta)
\notag
\\
  &\qquad
  +a_k(x,\eta)(\psi(2^{-(k-1)}\eta)-\psi(2^{-(k-h)}\eta) \delta_{k\ge h})\big)
     + a^{0}(x,\eta)\psi(\eta)
  \label{a2'-eq}
\end{align}
These three series converge in the Fr\'echet space $S^{d+1}_{1,1}(\Rn\times
\Rn)$, for the sums are locally finite. Therefore it is clear that
\begin{equation}
  a(x,\eta)=a^{(1)}(x,\eta)+a^{(2)}(x,\eta)+a^{(3)}(x,\eta),
\label{a1a2a3-id}
\end{equation}
where some of the partially Fourier transformed symbols have conical supports, 
\begin{equation}
  \hat a{}^{(1)}(\xi,\eta)\ne0 \implies |\xi|\le\tfrac{2R}{r2^h}|\eta|,
  \qquad
  \hat a{}^{(3)}(\xi,\eta)\ne0 \implies |\eta|\le\tfrac{2R}{r2^h}|\xi|.
\label{hata-supp} 
\end{equation}
This well-known fact follows from the supports of $\psi$ and $\varphi$.
But a sharper exploitation gives

\begin{prop}   \label{a13tdc-prop}
For each $a\in S^d_{1,1}$ and every modulation function
$\psi\in C^\infty_0(\Rn)$ in \eqref{Rr-eq}, the associated symbols 
$a^{(1)}_{\psi}(x,\eta)$ and $a^{(3)}_{\psi}(x,\eta)$ fulfil the twisted diagonal condition
\eqref{tdc-cnd}. 
\end{prop}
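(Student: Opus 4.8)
The plan is to work directly with the partially Fourier transformed symbols, without invoking the Spectral Support Rule used in Proposition~\ref{corona-prop}. Since $a^{k-h}(x,\eta)=\psi(2^{h-k}D_x)a(x,\eta)$ and $a_j(x,\eta)=\varphi(2^{-j}D_x)a(x,\eta)$ by \eqref{ua_j-eq}--\eqref{ua^j-eq}, and since multiplication by a function of $\eta$ alone commutes with $\cal F_{x\to\xi}$, the first step is to transform \eqref{a1'-eq} and \eqref{a3'-eq} termwise into
\begin{equation}
  \hat a{}^{(1)}_\psi(\xi,\eta)=\sum_{k\ge h}\psi(2^{h-k}\xi)\,\varphi(2^{-k}\eta)\,\hat a(\xi,\eta),
  \qquad
  \hat a{}^{(3)}_\psi(\xi,\eta)=\sum_{j\ge h}\varphi(2^{-j}\xi)\,\psi(2^{-(j-h)}\eta)\,\hat a(\xi,\eta),
\end{equation}
the sums being locally finite in $(\xi,\eta)$ because the factor $\varphi(2^{-k}\eta)$, resp.\ $\varphi(2^{-j}\xi)$, confines each summand to a fixed dyadic shell.

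The core of the argument is then a single application of the triangle inequality on the support of each summand. Put $R_h=\tfrac r2-R2^{-h}$, which is $>0$ by the standing choice $2R<r2^h$, and set $B=R/R_h$; as $r<R$ one has $R_h<R/2$, hence $B>2\ge 1$, so $B$ is admissible for \eqref{tdc-cnd}. On the support of the $k$th term of $\hat a{}^{(1)}_\psi$ one has $\psi(2^{h-k}\xi)\ne0$ and $\varphi(2^{-k}\eta)\ne0$, hence $|\xi|\le R2^{k-h}$ and $r2^{k-1}\le|\eta|\le R2^k$ by \eqref{Rr-eq} and \eqref{phi-eq}; thus $|\xi+\eta|\ge|\eta|-|\xi|\ge r2^{k-1}-R2^{k-h}=2^kR_h>0$, and therefore $|\eta|\le R2^k=B\cdot R_h2^k\le B|\xi+\eta|\le B(1+|\xi+\eta|)$. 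Consequently every summand, and hence $\hat a{}^{(1)}_\psi$, vanishes on the open set where $B(1+|\xi+\eta|)<|\eta|$, which is precisely \eqref{tdc-cnd} for $a^{(1)}_\psi$.

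Finally, the case of $a^{(3)}_\psi$ is obtained by interchanging the roles of $\xi$ and $\eta$: on the support of the $j$th term one reads off $r2^{j-1}\le|\xi|\le R2^j$ and $|\eta|\le R2^{j-h}$, so $|\xi+\eta|\ge|\xi|-|\eta|\ge r2^{j-1}-R2^{j-h}=2^jR_h>0$ and $|\eta|\le R2^{j-h}\le R2^j=B\cdot R_h2^j\le B(1+|\xi+\eta|)$, giving \eqref{tdc-cnd} with the same $B$. I do not foresee a genuine obstacle; the only points requiring a moment's attention are the legitimacy of transforming the series termwise (already implicit in \eqref{a1a2a3-id} and \eqref{hata-supp}) and checking that $R_h>0$ and $B\ge 1$ really follow from the standing relation $2R<r2^h$ with $h\ge 2$ and $R>r$.
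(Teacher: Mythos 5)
Your proof is correct and takes essentially the same route as the paper: both read off from the supports of $\psi$ and $\varphi$ that on $\supp\hat a{}^{(j)}_\psi$ the frequency $\xi$ is small compared to $\eta$ (for $j=1$) or vice versa (for $j=3$), and then apply the triangle inequality to separate from the twisted diagonal. The only difference is presentational\,---\,the paper first aggregates the termwise support information into the conical bounds \eqref{hata-supp} and obtains $B_1=(1-\tfrac{2R}{r2^h})^{-1}$, whereas you argue termwise on each dyadic shell, which yields the sharper lower bound $|\xi+\eta|\ge R_h2^k$ (as in Proposition~\ref{corona-prop}) at the cost of the somewhat larger, but equally admissible, constant $B=R/R_h$.
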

\begin{proof}
When $\hat a{}^{(3)}(\xi,\eta)\ne0$ it follows from \eqref{hata-supp}, which in particular
yields $|\eta|<|\xi|$, that
\begin{equation}
  |\xi+\eta|\ge |\xi|-|\eta|\ge |\xi|(1-\tfrac{2R}{r2^h})>|\eta|(1-\tfrac{2R}{r2^h}).
\label{xieta-lb}
\end{equation}
Therefore $\hat a{}^{(3)}(\xi,\eta)=0$ whenever $B_1|\xi+\eta|<|\eta|$
holds for $B_1=(1-\frac{2R}{r2^h})^{-1}$; 
a fortiori \eqref{tdc-cnd} is fulfilled  with $B=B_1>1$. 
The case of $a^{(1)}$ is a little simpler.
\end{proof}

To elucidate the role of the twisted diagonal, note that
the lower bound in Proposition~\ref{corona-prop} reappears
by using $|\xi|\ge r2^{k-1}$ in the middle of \eqref{xieta-lb}.

Anyhow, it is a natural programme to verify that $u\in\cal S'$ belongs to the domain
of the operator $a^{(j)}(x,D)$ precisely when the previously introduced series denoted
$a^{(j)}(x,D)u$ converges; cf.\ \eqref{a1-eq}--\eqref{a3-eq}. In view of the definition by
vanishing frequency modulation in \eqref{aPsi-eq} ff, this will
necessarily be lengthy because a second modulation function $\Psi$ has
to be introduced.

To indicate the details for $a^{(1)}(x,\eta)$, let $\psi, \Psi\in C^\infty_0(\Rn)$ be
equal to $1$ around the origin, and let $\psi$ be used as the fixed
modulation function entering $a^{(1)}(x,D)=a^{(1)}_{\psi}(x,D)$ in \eqref{a1-eq};
and set $\varphi=\psi-\psi(2\cdot )$. The numbers $r, R$ and $h$ are 
then chosen in relation to $\psi$ as in \eqref{Rr-eq}.

Moreover, $\Psi$ is used for the frequency modulation made when
Definition~\ref{a11-defn} is applied to $a^{(1)}_\psi(x,D)$.
This gives the following identity in $S^d_{1,1}$,
where prime indicates a finite sum,
\begin{multline}
  \Psi(2^{-m}D_x)a^{(1)}(x,\eta)\Psi(2^{-m}\eta)=
   \sum_{k=h}^{m+\mu}a^{k-h}(x,\eta)\varphi(2^{-k}\eta)
\\
   + \sideset{}{'}\sum_k\Psi(2^{-m}D_x)a^{k-h}(x,\eta)
  \varphi(2^{-k}\eta)\Psi(2^{-m}\eta).
\label{a1u'-eq}
\end{multline}
Indeed, if $\lambda,\Lambda>0$ fulfil that
$\Psi(\eta)=1$ for $|\eta|\le \lambda$ while $\Psi=0$ for $|\eta|\ge \Lambda$, 
the support of $\varphi(2^{-k}\eta )$ in \eqref{a1'-eq}
lies by \eqref{phi-eq} in one of the `harmless' level sets
$\Psi(2^{-m}\eta)=1$ or $\Psi(2^{-m}\eta)=0$ when, respectively,
\begin{equation}
  R 2^k\le   \lambda 2^m\quad\text{or}\quad r 2^{k-1}\ge  \Lambda 2^m.
\end{equation}
That is, $\supp\varphi(2^{-k}\cdot)$ is contained in these level sets unless $k$ fulfils
\begin{equation}
  m+\log_2(\lambda/R)<k< m+1 +\log_2(\Lambda/r).
\end{equation}
Therefore the primed sum has at most
$1+\log_2\tfrac{R\Lambda}{r\lambda}$ terms, independently of the 
parameter $m$; in addition $\Psi(2^{-m}\eta)$ and $\Psi(2^{-m}D_x)$ disappear 
from the other terms, as stated in \eqref{a1u'-eq}.

Consequently, with $\mu= [\log_2(\lambda/R)]$ and $k=m+l$, for
$l\in\Z$, one has for $u\in \cal S'(\Rn)$ that
\begin{multline}
  \OP(\Psi(2^{-m}D_x)a^{(1)}(x,\eta)\Psi(2^{-m}\eta))u=
   \sum_{k=h}^{m+\mu}a^{k-h}(x,D)u_k
\\
   + \sideset{}{'}\sum_{\mu<l<1+\log_2(\Lambda/r)}
     \OP(\Psi(2^{-m}D_x)\psi(2^{h-l-m}D_x)a(x,\eta)
     \varphi(2^{-m-l}\eta)\Psi(2^{-m}\eta))u.
  \label{a1u-eq}
\end{multline}

A similar reasoning applies to $a^{(3)}(x,\eta)$. The main difference is that
the possible inclusion of $\supp\varphi(2^{-j}\cdot)$, into the level sets
where $\Psi(2^{-m}\cdot )$ equals $1$ or $0$, in this case applies to the
symbol 
$\Psi(2^{-m}D_x)a_j(x,\eta)=\cal F^{-1}_{\xi\to
x}(\Psi(2^{-m}\xi)\varphi(2^{-j}\xi)\hat a(\xi,\eta))$. Therefore one has for the same~$\mu$,
\begin{multline}
  \OP(\Psi(2^{-m}D_x)a^{(3)}(x,\eta)\Psi(2^{-m}\eta))u=
   \sum_{j=h}^{m+\mu}a_j(x,D)u^{j-h}
\\
   + \sideset{}{'}\sum_{\mu<l<1+\log_2(\Lambda/r)}
     \OP(\Psi(2^{-m}D_x)\varphi(2^{-l-m}D_x)a(x,\eta)
     \psi(2^{h-m-l}\eta)\Psi(2^{-m}\eta))u.
  \label{a3u-eq}
\end{multline}

Treating $a^{(2)}_{\psi}(x,D)$ analogously, it is not difficult
to see that once again the central issue is whether $\supp\varphi(2^{-k}\cdot)$
is contained in  the set where $\Psi(2^{-m}\cdot )=1$ or $=0$. So when
$m\ge h$ for simplicity, one has for the
same~$\mu$, and with primed sums over the same integers $l$ as above,
\begin{equation}
\begin{split}
  \OP(\Psi(2^{-m}D_x)a^{(2)}(x,&\eta)\Psi(2^{-m}\eta))u =
   \sum^{m+\mu}_{k=0} \big((a^k-a^{k-h})(x,D)u_k +a_k(x,D)(u^{k-1}-u^{k-h})\big)
\\
   &+ \sideset{}{'}\sum
     \OP(\Psi(2^{-m}D_x)(a^{m+l}(x,\eta)-a^{m+l-h}(x,\eta))
     \varphi(2^{-m-l}\eta)\Psi(2^{-m}\eta))u
\\
   &+ \sideset{}{'}\sum
     \OP(\Psi(2^{-m}D_x)a_{m+l}(x,\eta)
     (\psi(2^{1-m-l}\eta)-\psi(2^{h-m-l}\eta))\Psi(2^{-m}\eta))u.
  \label{a2u-eq}
\end{split}
\end{equation}

The programme introduced after Proposition~\ref{a13tdc-prop} is now completed by
letting $m\to\infty $ in \eqref{a1u-eq}--\eqref{a2u-eq}
and observing that the infinite series in \eqref{a1-eq}--\eqref{a3-eq} reappear in this
way. Of course, this relies on the fact that the remainders in the 
primed sums over $l$ can be safely ignored:

\begin{prop}
  \label{remainder-prop}
When $a(x,\eta)$ is given in $S^d_{1,1}(\Rn\times \Rn)$ and $\Psi$, $\psi\in
C^\infty_0(\Rn)$ equal $1$ in neighbourhoods of the origin, then
it holds for every $u\in \cal S'(\Rn)$ that each term (with $l$ fixed) in 
the primed sums in \eqref{a1u-eq}--\eqref{a3u-eq} tends to $0$ in 
$\cal S'(\Rn)$ for $m\to\infty $.

This is valid for \eqref{a2u-eq} too,
if $a(x,\eta)$ in addition fulfils the twisted diagonal condition
\eqref{tdc-cnd}.
\end{prop}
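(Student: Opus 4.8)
The plan is to handle all the remainder terms at once. Each term (with $l$ fixed) in the primed sums in \eqref{a1u-eq}, \eqref{a3u-eq} and \eqref{a2u-eq} has the form $w_m=\OP(b_m)u$, where $b_m$ arises from $a$ by frequency cut-offs in $x$ and in $\eta$, all at dyadic scale $2^{m}$ (as $l$ is held fixed, the accompanying powers of $2$ are harmless); thus $b_m\in S^{-\infty}$ with compact $\eta$-support contained in a ball $\{\,|\eta|\le c\,2^{m}\,\}$. I would prove $w_m\to0$ in $\cal S'(\Rn)$ by combining two facts. First, that the spectrum escapes to infinity: $\supp\cal F w_m\subset\{\,\xi\mid c_1 2^{m}\le|\xi|\le c_2 2^{m}\,\}$ for constants $c_1,c_2>0$ independent of $m$. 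Second, a pointwise bound $|w_m(x)|\le C(m)(1+|x|)^{N}$ with $C(m)=\cal O(2^{Lm})$ for a fixed exponent $L$. Neither fact suffices on its own — the pointwise constant $C(m)$ generally blows up — but together they do, by dividing by a high power of $\mlap$.

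For the spectral inclusion I would, reasoning as in Proposition~\ref{corona-prop}, replace $u$ by $\chi(D)u$, which has compactly supported spectrum (legitimate since $b_m$ has compact $\eta$-support), and apply the Spectral Support Rule, Theorem~\ref{supp-thm}, so that $\supp\cal F w_m\subset\{\,\xi+\eta\mid(\xi,\eta)\in\supp\hat b_m\,\}$. On $\supp\hat b_m$ the variable $\eta$ is confined by the $\varphi(2^{-m-l}\cdot)$- or $\psi(2^{h-m-l}\cdot)$-cut-off and $\xi$ by the other cut-off; for the terms from \eqref{a1u-eq} and \eqref{a3u-eq} the relation $2R<r2^{h}$ forces $|\xi|<|\eta|$ (respectively $|\eta|<|\xi|$), whence $|\xi+\eta|\ge R_h2^{m+l}>0$ and $|\xi+\eta|\le\tfrac54R2^{m+l}$, just as in Proposition~\ref{corona-prop}. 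For the two terms in \eqref{a2u-eq} the difference of $\psi$'s no longer provides a lower bound on $|\xi+\eta|$, and this is exactly where the twisted diagonal condition \eqref{tdc-cnd} is needed: on $\supp\hat a$ one has $|\xi+\eta|\ge|\eta|/B-1$, and since $|\eta|\ge c\,2^{m+l}$ on the relevant supports (the $\eta$-cut-offs there vanish near the origin), $|\xi+\eta|\ge c_1 2^{m+l}$ for all large $m$, precisely as in Proposition~\ref{ball-prop}.

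For the pointwise bound I would collapse the $x$-cut-offs into one $\Phi\in C^\infty_0(\Rn)$ and the $\eta$-cut-offs into one $\Psi_1\in C^\infty_0(\Rn)$ (possible because $l$ is fixed and constant near the origin by construction), so that $w_m=\OP(\Phi(2^{-m}D_x)a(x,\eta)\Psi_1(2^{-m}\eta))u$; Proposition~\ref{cutoff-prop} then gives, for any $N\ge\order_{\cal S'}(\cal F u)$, that $|w_m(x)|\le C(m)(1+|x|)^{N}$ with $C(m)=\cal O(2^{m(N+d)_+})$, improved to $\cal O(2^{m(N+d)})$ when $0\notin\supp\Psi_1$ (which is the case except for the $a^{(3)}$-remainder). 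In all cases $C(m)=\cal O(2^{Lm})$ with $L$ fixed.

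Finally, using $\supp\cal F w_m\subset\{\,|\xi|\ge c_1 2^{m}\,\}$, I would fix $\gamma\in C^\infty(\Rn)$ with $\gamma=0$ near $0$ and $\gamma=1$ for $|\zeta|\ge c_1/2$, put $\sigma(\zeta)=|\zeta|^{-2M}\gamma(\zeta)$ — a classical symbol of order $-2M$ — and set $v_m=2^{-2Mm}\sigma(2^{-m}D)w_m$. Then $w_m=(\mlap)^Mv_m$, since $\gamma(2^{-m}\cdot)\equiv1$ on $\supp\cal F w_m$ and there $|\xi|^{2M}\cdot2^{-2Mm}\sigma(2^{-m}\xi)=\gamma(2^{-m}\xi)$. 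For $2M>n+N$ the kernel $\cal F^{-1}\sigma$ is continuous with rapid decay, so the convolution defining $v_m$ converges absolutely and a routine estimate against the polynomial bound yields $|v_m(x)|\le C''\,2^{-2Mm}C(m)(1+|x|)^{N}$; choosing in addition $2M>L$ makes this tend to $0$, so $v_m\to0$ in $\cal S'(\Rn)$ and hence $w_m=(\mlap)^Mv_m\to0$ in $\cal S'(\Rn)$ by continuity of $(\mlap)^M$. The main obstacle is precisely this last balancing act: a polynomial-in-$x$ bound with a constant growing in $m$ does not by itself give $\cal S'$-convergence, and one recovers it only by trading the escape of the spectrum to infinity for the decay factor $2^{-2Mm}$ — which for the symmetric term in \eqref{a2u-eq} is available solely because of the twisted diagonal condition.
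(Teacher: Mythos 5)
Your proof is correct and reproduces both of the paper's key ingredients in the same way: the spectral inclusion $\supp\cal Fw_m\subset\{c_12^m\le|\xi|\le c_22^m\}$ via the Spectral Support Rule (as in Propositions~\ref{corona-prop} and \ref{ball-prop}, with the twisted diagonal condition supplying the lower bound precisely for the $a^{(2)}$-remainders), and the polynomial bound $|w_m(x)|\le C(m)(1+|x|)^N$, $C(m)=\cal O(2^{Lm})$, via Proposition~\ref{cutoff-prop}.

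Where you genuinely diverge is the \emph{final} step. The paper closes by feeding these two facts into Remark~\ref{corona-rem}, whose justification rests on the proof of Lemma~\ref{corona-lem} in Appendix~\ref{corona-app} (a continuous dyadic partition of unity $1=\psi_0+\int_1^\infty\psi(\xi/t)\,dt/t$ followed by a weighted Cauchy--Schwarz estimate). You instead give a self-contained argument: writing $w_m=(\mlap)^Mv_m$ with $v_m=2^{-2Mm}\sigma(2^{-m}D)w_m$, using that $\gamma(2^{-m}\cdot)\equiv1$ on $\supp\cal Fw_m$, and trading the gain $2^{-2Mm}$ against the loss $\cal O(2^{Lm})$. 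This avoids the appendix machinery entirely and makes explicit the exchange between spectral escape to infinity and a power of $\mlap$. The paper's route has the advantage of reusing a lemma that is needed anyway (e.g.\ in Theorems~\ref{a123-thm} and \ref{sigma-thm}), so the appeal to Remark~\ref{corona-rem} costs nothing extra there; your route is more elementary and arguably more transparent for the narrow purpose of showing a \emph{single} sequence of remainders tends to zero, as opposed to rapid convergence of a series. Both are sound. One small point worth making explicit in your write-up is that for the $a^{(2)}$-remainders the lower spectral bound $|\xi+\eta|\ge c_12^{m+l}$ only holds for $m$ above a fixed threshold (depending on $B,r,h,l$), which is of course harmless since you are taking $m\to\infty$ with $l$ fixed.
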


\begin{proof}
To show that each remainder term tends to $0$ for $m\to\infty $
and fixed $l$,
it suffices to verify \eqref{DAC-cnd} and \eqref{CM-cnd}
in view of Remark~\ref{corona-rem}.

For $a^{(1)}_{\psi}(x,D)$, note that by repeating the proof of
Proposition~\ref{corona-prop} (ignoring $\Psi$)
each remainder in \eqref{a1u-eq} has $\xi$ in its spectrum only
when $(R_h2^l) 2^m\le|\xi|\le \tfrac{5\cdot  2^l}{4}R2^m$.

Moreover, each remainder term is $\le c 2^{k(N+d)}(1+|x|)^{N}$ for
$N\ge \order_{\cal S'}(\hat u)$ according to
Proposition~\ref{cutoff-prop}, for with the cut-off functions 
$\Psi\psi(2^{h-l}\cdot )$ and $\varphi(2^{-l}\cdot )\Psi$ the latter is
$0$ around the origin. So a crude estimate by
$c2^{k(N+d_+)}(1+|x|)^{N+d_+}$ shows that \eqref{CM-cnd} is fulfilled.

Similarly for the primed sum in
\eqref{a3u-eq}, where $\psi(2^{h-l}\cdot )\Psi$ is $1$ around the origin;
which again results in the bound $c2^{k(N+d_+)}(1+|x|)^{N+d_+}$ for $N\ne-d$. 

The procedure also works for \eqref{a2u-eq},
for \eqref{DAC-cnd} is verified as in
Proposition~\ref{ball-prop}, cf.\ \eqref{supp2'-eq}, 
because the extra spectral localisations provided by 
$\Psi(2^{-m}\cdot )$ cannot increase the spectra.
For the pointwise estimates one may now use e.g.\ 
$\Psi\varphi(2^{-l}\cdot )$ and $(\psi(2^{1-l}\cdot )-\psi(2^{h-l}\cdot ))\Psi$
as the cut-off functions in the last part of \eqref{a2u-eq}.
This yields the proof of Proposition~\ref{remainder-prop}.
\end{proof}

An extension of the proposition's remainder analysis to general
$a^{(2)}_\psi(x,D)$ without a condition on the behaviour along the
twisted diagonal does not seem feasible.
But such results will follow in Section~\ref{split-sect}
from a much deeper investigation of $a(x,D)$ itself; cf.\ Theorem~\ref{a2a-thm}.

\begin{rem}
The type $1,1$-operator $a^{(1)}(x,D)$ induced by \eqref{a1'-eq} is a
\emph{paradifferential} operator in the sense of Bony~\cite{Bon}, as well as in H{\"o}rmander's
framework of residue classes in \cite[Ch.~10]{H97}. 
The latter follows from \eqref{hata-supp}, but will not be pursued here.
$a^{(2)}(x,D)$ and $a^{(3)}(x,D)$ are also called paradifferential operators, following Yamazaki~\cite{Y1}.
The decomposition \eqref{a123-eq}--\eqref{a3-eq} can be traced back to Kumano-go and Nagase,
who used a variant of $a^{(1)}(x,\eta)$ to smooth non-regular symbols, cf.\ \cite[Thm~1.1]{KuNa78}.
It was exploited in
continuity analysis of pseudo-differential operators in e.g.\ \cite{Y1,Mar91,JJ05DTL,Lan06}.
\end{rem}

\begin{rem}   \label{piprod-rem}
For pointwise multiplication decompositions analogous to \eqref{a123-eq} 
were used implicitly by Peetre \cite{Pee}, Triebel \cite{T-pmlt}; and more explicitly in the
paraproducts of Bony~\cite{Bon}.
Moreover, for $a=a(x)$ Definition~\ref{a11-defn} reduces to the product 
$\pi(a,u)$ introduced formally in \cite{JJ94mlt} as
\begin{equation}
  \pi(a,u)=\lim_{m\to\infty} a^m\cdot u^m.
  \label{pprod-eq}
\end{equation}
This was analysed in \cite{JJ94mlt}, including 
continuity properties deduced from \eqref{a123-eq}, that essentially
is a splitting of the generalised pointwise product
$\pi(\cdot,\cdot)$ into paraproducts. Partial associativity, i.e.\ 
$f\pi(a,u)=\pi(fu,a)=\pi(a,fu)$ for $f\in C^\infty$, was first
obtained with the refined methods developed later in
\cite[Thm.~6.7]{JJ08vfm}, though.
\end{rem}

\section{Action on Temperate Distributions}
  \label{split-sect}

\subsection{Littlewood--Paley Analysis of Type $\mathbf{1},\mathbf{1}$-Operators}   
   \label{lwp11-ssect}
First the full set of conclusions is drawn for the operators 
$a^{(j)}(x,D)$, $j=1,2,3$ studied in Section~\ref{para-ssect}. 
Of course none of them have anomalies if $a(x,\eta)$ fulfils \eqref{tdc-cnd}:

\begin{thm}
  \label{a123-thm}
When $a(x,\eta)$ is a symbol in $S^d_{1,1}(\Rn\times\Rn)$ for some $d\in \R$
and $\psi\in C^\infty_0(\Rn)$ equals $1$ around the origin,
then the associated type $1,1$-operators  
$a^{(1)}_\psi(x,D)$ and $a^{(3)}_\psi(x,D)$ 
are everywhere defined continuous linear maps
\begin{equation}
 a^{(1)}_\psi(x,D),\ a^{(3)}_\psi(x,D) 
  \colon \cal S'(\Rn)\to\cal S'(\Rn),
\end{equation}
that are given by formulae 
\eqref{a1-eq} and \eqref{a3-eq}, where the infinite series
converge rapidly in $\cal S'(\Rn)$ for every $u\in \cal S'(\Rn)$. The
adjoints are also in $\OP(S^d_{1,1}(\Rn\times \Rn))$.

If furthermore $a(x,\eta)$ fulfils the twisted diagonal condition
\eqref{tdc-cnd}, these conclusions are valid verbatim for the operator 
$a^{(2)}_\psi(x,D)$ given by the series in \eqref{a2-eq}. 
\end{thm}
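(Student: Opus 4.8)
The plan is to prove Theorem~\ref{a123-thm} by combining the corona/ball support estimates of Section~\ref{corona-sect}, the polynomial bounds of Proposition~\ref{pe13-prop}, and the convergence lemma referenced in that section (Lemma~\ref{corona-lem}), then identifying the resulting sums with the operators $a^{(j)}_\psi(x,D)$ obtained by vanishing frequency modulation via Proposition~\ref{remainder-prop}; the claims about the adjoint follow from Proposition~\ref{a13tdc-prop} together with the self-adjointness results (Lemma~\ref{GB-lem} and Theorem~\ref{tildeS-thm}).

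\textbf{Convergence of the series in $\cal S'$.} First I would fix $a\in S^d_{1,1}$, a modulation function $\psi$ with associated $r,R,h$ as in \eqref{Rr-eq}, and $u\in\cal S'$ with $N\ge\order_{\cal S'}(\cal Fu)$; enlarging $N$ if necessary we may assume $d+N\ne0$. By Proposition~\ref{corona-prop} the summands $a^{k-h}(x,D)u_k$ of the $a^{(1)}$-series and $a_k(x,D)u^{k-h}$ of the $a^{(3)}$-series have spectra in the dyadic coronas $\{R_h2^k\le|\xi|\le\frac{5R}{4}2^k\}$, so they satisfy the corona hypothesis \eqref{DAC-cnd} of Lemma~\ref{corona-lem} with $\theta_0=\theta_1=1$ (after normalising the corona radii). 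Proposition~\ref{pe13-prop} supplies the polynomial bounds $|a^{k-h}(x,D)u_k(x)|\le c2^{k(N+d)}(1+|x|)^N$ and $|a_k(x,D)u^{k-h}(x)|\le c2^{k(N+d)_+}(1+|x|)^N$; but since the spectra sit in coronas rather than balls, a dyadic summation argument (Bernstein estimates against the corona radii, as in the cited Lemma~\ref{corona-lem}) absorbs the growth $2^{k(N+d)_+}$ and yields absolute, indeed rapid, convergence in $\cal S'$. Thus $a^{(1)}_\psi(x,D)u$ and $a^{(3)}_\psi(x,D)u$ are well-defined temperate distributions, and the partial sums converge rapidly; continuity in $u$ follows because all the estimates above are linear in $u$ with constants controlled by $\order_{\cal S'}(\cal Fu)$ and finitely many seminorms $p_{\alpha,\beta}(a)$, so the map $\cal S'\to\cal S'$ is continuous on the Montel space $\cal S'$ (sequential continuity suffices).

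\textbf{Identification with the type $1,1$-operators.} Next I would show these sums equal $a^{(j)}_\psi(x,D)u$ in the sense of Definition~\ref{a11-defn} applied to the symbols $a^{(j)}_\psi(x,\eta)$ from \eqref{a1'-eq}, \eqref{a3'-eq}. Introduce a second modulation function $\Psi$ and use the decompositions \eqref{a1u-eq} and \eqref{a3u-eq}: for each $m$, $\OP(\Psi(2^{-m}D_x)a^{(1)}(x,\eta)\Psi(2^{-m}\eta))u$ splits into the partial sum $\sum_{k=h}^{m+\mu}a^{k-h}(x,D)u_k$ plus finitely many primed remainder terms indexed by $l$. By Proposition~\ref{remainder-prop} each primed remainder tends to $0$ in $\cal S'$ as $m\to\infty$ (and there are boundedly many of them, uniformly in $m$), while the partial sum converges to $a^{(1)}_\psi(x,D)u$ by the previous paragraph; hence the limit exists in $\cal S'\hookrightarrow\cal D'$ and is independent of $\Psi$. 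Therefore $u\in D(a^{(1)}_\psi(x,D))$ with value the convergent series, and likewise for $a^{(3)}$. Since this holds for every $u\in\cal S'$, the operators are everywhere defined. For the adjoint assertion: Proposition~\ref{a13tdc-prop} shows $a^{(1)}_\psi$ and $a^{(3)}_\psi$ satisfy the twisted diagonal condition \eqref{tdc-cnd}, so by Lemma~\ref{GB-lem} their adjoint symbols are again of type $1,1$; alternatively one invokes Theorem~\ref{tildeS-thm}. This also re-proves continuity $\cal S'\to\cal S'$ via Proposition~\ref{GB-prop}.

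\textbf{The term $a^{(2)}_\psi(x,D)$ under the twisted diagonal condition, and the main obstacle.} If $a$ itself satisfies \eqref{tdc-cnd} for some $B\ge1$, then by the decomposition \eqref{a1a2a3-id} and the already-established conclusions for $a^{(1)}_\psi$, $a^{(3)}_\psi$, the symbol $a^{(2)}_\psi=a-a^{(1)}_\psi-a^{(3)}_\psi$ is of type $1,1$ and its partially Fourier transformed symbol inherits a twisted diagonal condition (the conical supports in \eqref{hata-supp} of the pieces we subtract are disjoint from a neighbourhood of $\cal T$ away from the relevant region, and \eqref{tdc-cnd} for $a$ transfers). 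Concretely, Proposition~\ref{ball-prop}, estimate \eqref{supp2'-eq}, shows that for large $k$ the summands of the $a^{(2)}$-series have spectra in genuine coronas $\{\frac{r}{2^{h+1}B}2^k\le|\xi|\le 2R2^k\}$, not merely balls; the finitely many small-$k$ terms are harmless. So the same corona-convergence argument (Lemma~\ref{corona-lem}) applies, using the polynomial bounds \eqref{a2pe'-eq}--\eqref{a2pe-eq} from Proposition~\ref{pe13-prop}, giving rapid convergence of the $a^{(2)}$-series in $\cal S'$; and the remainder analysis of Proposition~\ref{remainder-prop} (whose second half requires exactly this twisted diagonal hypothesis) identifies the sum with $a^{(2)}_\psi(x,D)u$ in the sense of Definition~\ref{a11-defn}. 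The adjoint of $a^{(2)}_\psi$ is of type $1,1$ because $(a^{(2)}_\psi)^*=a^*-(a^{(1)}_\psi)^*-(a^{(3)}_\psi)^*$ and each term on the right is of type $1,1$ (for $a^*$, note \eqref{tdc-cnd} forces $a\in\widetilde S^d_{1,1}$ via Lemma~\ref{GB-lem}). I expect the main obstacle to be precisely the book-keeping in the second paragraph: verifying that the bounded-in-$m$ family of primed remainder terms in \eqref{a1u-eq}--\eqref{a2u-eq} vanishes in $\cal S'$ uniformly enough, and that the corona radii after cutting by $\Psi(2^{-m}\cdot)$ still satisfy \eqref{DAC-cnd} so that Lemma~\ref{corona-lem}'s hypotheses hold for each fixed remainder term — this is where Proposition~\ref{remainder-prop} and the observation that extra spectral localisations only shrink spectra do the essential work, but the interplay of the two modulation scales $2^{-m}$ and $2^{-k}$ must be tracked with care.
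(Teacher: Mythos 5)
Your proposal is correct and follows essentially the same route as the paper: Proposition~\ref{a13tdc-prop} plus Proposition~\ref{GB-prop} (or Lemma~\ref{GB-lem}) for $\cal S'$-continuity and the adjoints, Propositions~\ref{corona-prop}--\ref{ball-prop} and \ref{pe13-prop} feeding Lemma~\ref{corona-lem} for rapid convergence, and Proposition~\ref{remainder-prop} via \eqref{a1u-eq}--\eqref{a2u-eq} for the identification. The only cosmetic difference is that for the twisted diagonal condition on $a^{(2)}_\psi$ the paper just notes $\supp\cal F_{x\to\xi}a^{(2)}\subset\supp\cal F_{x\to\xi}a$, which is simpler than your subtraction argument $a^{(2)}_\psi=a-a^{(1)}_\psi-a^{(3)}_\psi$.
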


\begin{proof}
As the symbols $a^{(1)}(x,\eta)$ and
$a^{(3)}(x,\eta)$ both belong to $S^d_{1,1}$ and fulfil 
\eqref{tdc-cnd} by Proposition~\ref{a13tdc-prop},
the corresponding  operators are 
defined and continuous on $\cal S'$ 
by Proposition~\ref{GB-prop}, with $a^{(1)}(x,D)^*$ and
$a^{(3)}(x,D)^{*}$ both of type $1,1$.

Since $\supp \cal F_{x\to\xi}a^{(2)}\subset \supp \cal F_{x\to\xi}a$
it follows that $a^{(2)}(x,D)$ satisfies \eqref{tdc-cnd}, when $a(x,\eta)$ does so.
Hence the preceding argument also applies to $a^{(2)}(x,D)$, so that it is continuous on $\cal
S'$ with its adjoint being of type $1,1$.

Moreover, the series $\sum_{k=0}^\infty a^{k-h}(x,D)u_k$ in \eqref{a1-eq}
converges rapidly in $\cal S'$ for every $u\in \cal S'$.
This follows from $1^\circ$ of Lemma~\ref{corona-lem},
for the terms fulfil \eqref{DAC-cnd} and \eqref{CM-cnd}
by Proposition~\ref{corona-prop}, cf.\ \eqref{supp1-eq}, 
and Proposition~\ref{pe13-prop}, respectively.
(The latter gives a bound by $2^{k(N+d_+)}(1+|x|)^{N+d_+}$.)
Now the distribution $\sum_{k=0}^\infty a^{k-h}(x,D)u_k$ equals $a^{(1)}(x,D)u$ because of 
formula \eqref{a1u-eq}, since the primed sum there goes to $0$ for $m\to\infty$, 
as shown in Proposition~\ref{remainder-prop}.

Similarly Lemma~\ref{corona-lem} yields convergence of 
the series \eqref{a3-eq} for $a^{(3)}(x,D)u$ when $u\in \cal S'$.
By Propositions~\ref{ball-prop} and \ref{pe13-prop},
convergence of the $a^{(2)}$-series in \eqref{a2-eq'}
also follows from Lemma~\ref{corona-lem}.
The series identify with the operators 
in view of the remark made prior to Proposition~\ref{remainder-prop}.
\end{proof}

It should be emphasized that 
duality methods and pointwise estimates contribute in two different ways in
Theorem~\ref{a123-thm}: 
once the symbol $a^{(1)}(x,\eta)$ has been introduced,
continuity on $\cal S'(\Rn)$ of the associated type $1,1$-operator 
$a^{(1)}(x,D)$
is obtained by duality through Proposition~\ref{GB-prop}. However,  the
pointwise estimates in Section~\ref{pe-sect} yield 
(vanishing of the remainder terms, hence)
the identification
of $a^{(1)}(x,D)u$ with the series in \eqref{a1-eq}. 
Furthermore, the pointwise estimates also
give an explicit proof of the fact that $a^{(1)}(x,D)$ is defined on the
entire $\cal S'(\Rn)$, for the right-hand side of \eqref{a1-eq} does not
depend on the modulation function $\Psi$.
Similar remarks apply to $a^{(3)}(x,D)$. Thus duality methods and
pointwise estimates together lead to a deeper 
analysis of type $1,1$-operators.

\begin{rem}
Theorem~\ref{a123-thm} and its proof generalise a result of 
Coifman and Meyer \cite[Ch.~15]{MeCo97} in three ways.
They stated Lemma~\ref{corona-lem} for $\theta_0=\theta_1=1$ 
and derived a corresponding fact for 
paramultiplication, though only 
with a treatment of the first and third term.
\end{rem}

Going back to the given $a(x,D)$, one derives from
Theorem~\ref{a123-thm} and  \eqref{a123-eq} the following

\begin{thm} \label{tdc-thm}
When $a\in S^d_{1,1}(\Rn\times \Rn)$ fulfils  the twisted diagonal condition
\eqref{tdc-cnd}, then the associated
type $1,1$-operator  
$a(x,D)$ defined by vanishing frequency modulation
is an everywhere defined continuous linear map 
\begin{equation}
 a(x,D)\colon \cal S'(\Rn)\to\cal S'(\Rn),  
\end{equation}
with its adjoint $a(x,D)^*$ also in $\OP(S^d_{1,1}(\Rn\times \Rn))$.
The operator fulfils 
\begin{equation}
  a(x,D)u=a^{(1)}_\psi(x,D)u+a^{(2)}_\psi(x,D)u+a^{(3)}_\psi(x,D)u  
\end{equation}
for every $\psi\in C^\infty_0(\Rn)$ equal to $1$ in a
neighbourhood of the origin,
and the series in \eqref{a1-eq}--\eqref{a3-eq} 
converge rapidly in $\cal S'(\Rn)$ for every $u\in \cal S'(\Rn)$.
\end{thm}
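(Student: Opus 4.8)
The plan is to obtain Theorem~\ref{tdc-thm} by combining the duality result of Section~\ref{adj-sect} with the Littlewood--Paley analysis of Section~\ref{para-ssect}, so that almost all of the work is already done. Since $a$ satisfies the twisted diagonal condition \eqref{tdc-cnd}, Proposition~\ref{GB-prop} applies at once and yields that $D(a(x,D))=\cal S'(\Rn)$, that $a(x,D)\colon\cal S'(\Rn)\to\cal S'(\Rn)$ is continuous and linear, and that $a(x,D)=b(x,D)^{*}$ with $b(x,\eta)=e^{\im D_x\cdot D_\eta}\overline{a(x,\eta)}$ the adjoint symbol. By Lemma~\ref{GB-lem} this $b$ lies in $S^d_{1,1}(\Rn\times\Rn)$ and $b(x,D)$ maps $\cal S$ into $\cal S$; taking adjoints therefore gives $a(x,D)^{*}=b(x,D)=\OP(b)\in\OP(S^d_{1,1}(\Rn\times\Rn))$. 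This settles the first two assertions, leaving only the paradifferential identity and the rapid convergence.

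For these I would invoke Theorem~\ref{a123-thm}: under hypothesis \eqref{tdc-cnd} all three operators $a^{(1)}_\psi(x,D)$, $a^{(2)}_\psi(x,D)$, $a^{(3)}_\psi(x,D)$ are everywhere defined on $\cal S'(\Rn)$, and for every $u\in\cal S'(\Rn)$ each is given by the corresponding series in \eqref{a1-eq}--\eqref{a3-eq}, with rapid convergence in $\cal S'(\Rn)$; this is precisely the rapid-convergence statement of the present theorem, so nothing new is needed there. Moreover, for each fixed modulation function $\psi$ this convergence in $\cal S'$ implies convergence of the three series in $\cal D'(\Rn)$, which is exactly the hypothesis under which the bilinear expansion \eqref{bilin-eq} — obtained by inserting \eqref{tele-eq} twice into \eqref{aPsi-eq} — gives, upon letting $m\to\infty$, the decomposition \eqref{a123-eq}, i.e.
\[
  a_\psi(x,D)u=a^{(1)}_\psi(x,D)u+a^{(2)}_\psi(x,D)u+a^{(3)}_\psi(x,D)u .
\]
By Proposition~\ref{GB-prop} the left-hand side exists for every $u\in\cal S'$, is independent of $\psi$, and coincides with $a(x,D)u$; hence the displayed identity holds for every admissible $\psi$, which is the remaining assertion.

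I do not expect a genuine obstacle here, since the delicate analysis — the remainder estimates of Proposition~\ref{remainder-prop}, the spectral inclusions of Propositions~\ref{corona-prop} and \ref{ball-prop}, and the pointwise bounds of Proposition~\ref{pe13-prop} — has already been carried out in proving Theorem~\ref{a123-thm}. The one point deserving a line of care is the compatibility of the two descriptions of $a(x,D)u$, one via the adjoint $b(x,D)^{*}$ and the other via the three para\-differential series; but both are by construction the same limit of $\OP(\psi(2^{-m}D_x)a(x,\eta)\psi(2^{-m}\eta))u$: Proposition~\ref{GB-prop} guarantees this limit exists and is $\psi$-independent, while \eqref{bilin-eq} together with Theorem~\ref{a123-thm} identifies it, for each $\psi$, with $a^{(1)}_\psi(x,D)u+a^{(2)}_\psi(x,D)u+a^{(3)}_\psi(x,D)u$. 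Thus the theorem is a synthesis of the results of Sections~\ref{adj-sect} and \ref{para-ssect}, requiring no new estimates.
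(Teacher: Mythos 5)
Your proof is correct and follows exactly the route the paper intends: the paper's own justification is the single remark that the theorem ``derives from Theorem~\ref{a123-thm} and \eqref{a123-eq}'', and you simply spell out that derivation (Proposition~\ref{GB-prop} and Lemma~\ref{GB-lem} for the $\cal S'$-continuity and the adjoint symbol, Theorem~\ref{a123-thm} for the rapid convergence of all three series under \eqref{tdc-cnd}, and the bilinear splitting \eqref{bilin-eq} for the identity). Your closing paragraph on the compatibility of the two descriptions of $a(x,D)u$ is a sensible extra check; it poses no real difficulty because, with $\theta_0=\theta_1=1$, the three $m$th partial sums of \eqref{a1-eq}--\eqref{a3-eq} add up exactly to $\sum_{j,k\le m}a_j(x,D)u_k$, so \eqref{bilin-eq} passes to the limit without boundary terms.
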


To extend the discussion to general $a(x,D)$ without vanishing along the twisted diagonal, note that
Theorem~\ref{a123-thm} at least shows that  $a^{(1)}(x,D)u$ and $a^{(3)}(x,D)u$
are always defined and that \eqref{a1-eq} and~\eqref{a3-eq} are operator identities.

It remains to justify the operator notation $a^{(2)}(x,D)$ in \eqref{a2-eq} and to give its
precise relation to $a(x,D)$ itself. The point of departure is of
course the symbol splitting \eqref{a1a2a3-id};
the corresponding type $1,1$-operators are still denoted by $a^{(j)}(x,D)$. However, to
avoid ambiguity the series in \eqref{a1-eq}--\eqref{a3-eq} will now be temporarily 
written as $A^{(j)}_\psi u$, whence \eqref{a123-eq} amounts to
\begin{equation}
  a(x,D)u=A^{(1)}_\psi u+A^{(2)}_\psi u+A^{(3)}_\psi u \quad\text{ for $u\in\cal S'$}.
\label{a123tilde-id}
\end{equation}
Here the left-hand side exists if and only if the series
$A^{(2)}_\psi u$ converges, as $A^{(1)}_\psi u$, $A^{(3)}_\psi u$ always
converge by Theorem~\ref{a123-thm}.
This strongly indicates that \eqref{a2a-dom} below is true.
In fact, this main result of the analysis is obtained by frequency modulation:

\begin{thm}
  \label{a2a-thm}
  When $a(x,\eta)\in S^d_{1,1}(\Rn\times\Rn)$ and $a^{(2)}(x,\eta)$ denotes
  the type $1,1$-symbol in \eqref{a2'-eq}, derived from the paradifferential decomposition
  \eqref{a123-eq}, then
  \begin{equation}
    D(a^{(2)}(x,D))=    D(a(x,D))
\label{a2a-dom}
  \end{equation}
and $u\in\cal S'(\Rn)$ belongs to these  domains if and only if
the series \eqref{a2-eq}, or equivalently \eqref{a2-eq'},  converges
in $\cal D'(\Rn)$\,---\,in which case (also) formulae \eqref{a2-eq},
\eqref{a2-eq'} are operator identities. 
\end{thm}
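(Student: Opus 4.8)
The plan is to prove the domain identity by comparing the frequency-modulated versions of $a(x,D)$ and $a^{(2)}(x,D)$ and showing their difference always converges. First I would record that, by \eqref{a1a2a3-id}, the symbol splits as $a = a^{(1)}+a^{(2)}+a^{(3)}$ in $S^{d+1}_{1,1}$, and that $a^{(1)}$, $a^{(3)}$ fulfil the twisted diagonal condition \eqref{tdc-cnd} by Proposition~\ref{a13tdc-prop}. Hence Theorem~\ref{a123-thm} applies to the operators $a^{(1)}(x,D)$ and $a^{(3)}(x,D)$: they are everywhere defined and continuous on $\cal S'$, and agree with the series $A^{(1)}_\psi u$, $A^{(3)}_\psi u$ in \eqref{a1-eq}, \eqref{a3-eq}, which converge rapidly in $\cal S'$ for every $u\in\cal S'$.

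Next I would argue that for a fixed modulation function $\Psi$ and every $u\in\cal S'$,
\begin{equation}
  \OP(\Psi(2^{-m}D_x)a(x,\eta)\Psi(2^{-m}\eta))u
  = \OP(\Psi(2^{-m}D_x)a^{(1)}(x,\eta)\Psi(2^{-m}\eta))u
  + \OP(\Psi(2^{-m}D_x)a^{(2)}(x,\eta)\Psi(2^{-m}\eta))u
  + \OP(\Psi(2^{-m}D_x)a^{(3)}(x,\eta)\Psi(2^{-m}\eta))u,
\label{plan-split-eq}
\end{equation}
simply by bilinearity of $\OP(\cdot)u$ in the symbol and linearity of the modulation; these are all honest $\OP(S^{-\infty})$-operators acting on $u\in\cal S'$, so the splitting is unproblematic. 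The key point is then that the first and third terms on the right of \eqref{plan-split-eq} converge in $\cal D'(\Rn)$ as $m\to\infty$, to $a^{(1)}(x,D)u$ and $a^{(3)}(x,D)u$ respectively: for $a^{(1)}$ and $a^{(3)}$ this is exactly the content of Theorem~\ref{a123-thm}, valid unconditionally because those symbols satisfy \eqref{tdc-cnd}. Consequently, the left side of \eqref{plan-split-eq} has a limit in $\cal D'$ if and only if the middle term $\OP(\Psi(2^{-m}D_x)a^{(2)}(x,\eta)\Psi(2^{-m}\eta))u$ does, and the two limits differ by the $\psi$-independent quantity $a^{(1)}(x,D)u + a^{(3)}(x,D)u$. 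Since this holds for every admissible $\Psi$ and every admissible $\psi$, independence of the limit with respect to the modulation function for one operator is equivalent to independence for the other; combining these two equivalences gives $D(a(x,D)) = D(a^{(2)}(x,D))$ together with $a(x,D)u = a^{(1)}(x,D)u + a^{(2)}(x,D)u + a^{(3)}(x,D)u$ on that common domain.

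To finish I would identify the limit of the middle term with the series \eqref{a2-eq}/\eqref{a2-eq'}. Here I invoke the decomposition \eqref{a2u-eq}, which expresses $\OP(\Psi(2^{-m}D_x)a^{(2)}(x,\eta)\Psi(2^{-m}\eta))u$ as the partial sum $\sum_{k=0}^{m+\mu}\big((a^k-a^{k-h})(x,D)u_k + a_k(x,D)(u^{k-1}-u^{k-h})\big)$ plus two primed remainder sums over the bounded index range $\mu < l < 1+\log_2(\Lambda/r)$. The main obstacle — and the reason Proposition~\ref{remainder-prop} was stated with the twisted-diagonal hypothesis — is that without \eqref{tdc-cnd} one cannot conclude the primed remainders in \eqref{a2u-eq} tend to $0$ by the earlier spectral-localisation argument. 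However, this is now circumvented: from \eqref{a123tilde-id} and the already-established convergence of $A^{(1)}_\psi u$ and $A^{(3)}_\psi u$, one sees that $A^{(2)}_\psi u$ converges in $\cal D'$ precisely when $a(x,D)u$ is defined, and in that case equals $a(x,D)u - A^{(1)}_\psi u - A^{(3)}_\psi u$. Comparing with the limit of \eqref{plan-split-eq} forces the partial sums in \eqref{a2u-eq} (hence also the primed remainders, by difference) to converge, and their limit is both $a^{(2)}(x,D)u$ and $A^{(2)}_\psi u = $ the series \eqref{a2-eq}. Thus \eqref{a2-eq} and \eqref{a2-eq'} are operator identities on $D(a(x,D)) = D(a^{(2)}(x,D))$, as claimed. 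The delicate part is organising this so that no circularity creeps in: one must use the \emph{unconditional} Theorem~\ref{a123-thm} statements for $a^{(1)}$ and $a^{(3)}$ as the fixed input, and derive everything about $a^{(2)}$ as a consequence, rather than trying to prove convergence of the $a^{(2)}$-remainders directly.
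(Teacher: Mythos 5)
Your proposal is correct and follows essentially the same route as the paper's proof: split the $\Psi$-modulated operator via $a=a^{(1)}+a^{(2)}+a^{(3)}$, use that the $l=1,3$ terms have $\Psi$-independent limits (Theorem~\ref{a123-thm}) to obtain $D(a(x,D))=D(a^{(2)}(x,D))$, and then compare the bilinear identity \eqref{a123tilde-id} with \eqref{a2u-eq} to force the primed remainder to vanish and identify $a^{(2)}(x,D)u$ with the series $A^{(2)}_\psi u$.
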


\begin{proof}
A variant of \eqref{a123tilde-id} follows at once from \eqref{a1a2a3-id},
using a second modulation function $\Psi$ and the brief
notation from Remark~\ref{cutoff-rem},
\begin{equation}
  a^m(x,D)u^m=\sum_{l=1,2,3} \OP(\Psi(2^{-m}D_x)a^{(l)}(x,\eta)\Psi(2^{-m}\eta))u.
\end{equation}
Here the terms with $l=1$ and $l=3$ always have $\Psi$-independent limits for 
$m\to\infty$ according to Theorem~\ref{a123-thm}, so it is clear from the definition by
vanishing frequency modulation that $u\in D^{(2)}(a(x,D))$ is equivalent to 
$u\in D(a(x,D))$, hence to convergence of $A^{(2)}_\Psi u$, cf.\ \eqref{a123tilde-id} ff.

As for the last claim,  whenever $u\in D(a(x,D))$, then passage to the limit ($m\to\infty$) in the above
equation yields the following, when \eqref{a1u-eq}, \eqref{a2u-eq} and \eqref{a3u-eq}
are applied, now with the remainders
in the primed sums there denoted by $R^{(1)}_mu$, $R^{(2)}_mu$, $R^{(3)}_mu$ for brevity:
\begin{equation}
  \begin{split}
    a(x,D)u&=\lim_{m\to\infty} \sum_{l=1,2,3}
    \OP(\Psi(2^{-m}D_x)a^{(l)}(x,\eta)\Psi(2^{-m}\eta))u
\\
  & =A^{(1)}_\psi u+A^{(2)}_\psi u+A^{(3)}_\psi u +0+\lim_{m\to\infty} R^{(2)}_mu +0.
  \end{split}
\label{6term-sum}
\end{equation}
Note that convergence of $R^{(2)}_m$ follows from that of the other
six terms; cf.\ Proposition~\ref{remainder-prop}.
Compared to \eqref{a123tilde-id} this yields $\lim_m R^{(2)}_m=0$,
which via \eqref{a2u-eq} gives that $a^{(2)}(x,D)u=A^{(2)}_\psi u$.
\end{proof}

\subsection{The Twisted Diagonal Condition of Arbitrary Order}
  \label{twist-ssect}

When $a(x,\eta)$ is in the self-adjoint subclass $\widetilde
S^d_{1,1}$, then it follows Theorem~\ref{tildeS-thm} that the
domains in \eqref{a2a-dom} equal $\cal S'$. 

However, it is interesting to give an explicit proof that the domains in \eqref{a2a-dom} equal $\cal S'$ whenever
$a\in\widetilde S^d_{1,1}$. This can be done in a natural way by extending the proof of
Theorem~\ref{a2a-thm}, where the special estimates in
\eqref{Hsigma-eq} enter the convergence proof for $a^{(2)}(x,D)u$
directly, because they are rather close to the symbol factors from the
factorisation inequalities in Section~\ref{pe-sect}. 
The full generality with $\theta_0<\theta_1$ in the corona criterion
Lemma~\ref{corona-lem} is also needed now.

\begin{thm}   \label{sigma-thm}
Suppose $a(x,\eta)\in \widetilde S^d_{1,1}(\Rn\times \Rn)$, i.e.\ $a(x,\eta)$
fulfils one of the equivalent conditions in Theorem~\ref{a*-thm}. Then 
the conclusions of Theorems~\ref{tdc-thm}--\ref{a2a-thm} remain valid for $a(x,D)$;
in particular $D(a(x,D))=\cal S'(\Rn)$.
\end{thm}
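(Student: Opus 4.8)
The plan is to reduce the self-adjoint case to the twisted‑diagonal case already settled, by means of H\"ormander's decomposition \eqref{Hsum-eq}. Fix $\chi$ as in \eqref{chi1-eq}--\eqref{chi3-eq} and write $a=(a-a_{\chi,1})+\sum_{\nu=0}^\infty e_{2^{-\nu}}$. Since $a\in\widetilde S^d_{1,1}$ fulfils condition \eqref{orderN-cnd} of Theorem~\ref{a*-thm}, one has $p_{\alpha,\beta}(e_{2^{-\nu}})=\cal O(2^{-\nu N})$ for every $N$, so the series converges in $S^d_{1,1}$; the first summand satisfies \eqref{tdc-cnd} with $B=1$, hence Theorem~\ref{a123-thm} applied to it shows that its paradifferential series, in particular its symmetric term, converges rapidly in $\cal S'$ for every $u$. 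Thus the whole matter rests on summing over $\nu$ the symmetric series attached to the pieces $e_{2^{-\nu}}$, which by \eqref{Feep-eq} fulfil \eqref{tdc-cnd} with $B=2^{\nu+2}$.

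For each $\nu$ I would first pin down the spectra of the two term types $((e_{2^{-\nu}})^k-(e_{2^{-\nu}})^{k-h})(x,D)u_k$ and $(e_{2^{-\nu}})_k(x,D)(u^{k-1}-u^{k-h})$. The Spectral Support Rule, combined with the sharpened support inclusion $|\xi+\eta|\le2^{-\nu}|\eta|$ and $\max(1,|\xi+\eta|)\ge2^{-\nu}|\eta|/4$ on $\supp\cal F_{x\to\xi}e_{2^{-\nu}}$ read off from \eqref{Feep-eq}, places each such term, for all $k$ beyond a $\nu$-\emph{independent} threshold $k_0+\nu$, in a wide dyadic annulus $\{\,\tfrac r8 2^{k-\nu}\le|\xi|\le R\,2^{k-\nu}\,\}$ at the shifted scale $2^{k-\nu}$; for the $\cal O(\nu)$ lower indices only a ball is available. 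Next, the factorisation inequality \eqref{Fau*-eq} in the form of Proposition~\ref{cutoff-prop}, tracking that its constant is $\lesssim p_{\alpha,\beta}(a)$ (cf.\ Remark~\ref{Fa-rem}) and using that $u_k$ and $u^{k-1}-u^{k-h}$ have coronal spectra, yields for $N'\ge\order_{\cal S'}(\cal Fu)$ and every $N$
\begin{equation}
  |(e_{2^{-\nu}})_k(x,D)(u^{k-1}-u^{k-h})(x)|\le c_N\,2^{-\nu N}\,2^{k(N'+d)}(1+|x|)^{N'},
\end{equation}
and the analogous bound for $((e_{2^{-\nu}})^k-(e_{2^{-\nu}})^{k-h})(x,D)u_k$. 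Re-indexing $j=k-\nu$ and invoking $1^\circ$ of Lemma~\ref{corona-lem} in its full generality $\theta_0<\theta_1$ --- now with constants \emph{independent} of $\nu$ --- one gets that the symmetric series of $e_{2^{-\nu}}$ converges rapidly in $\cal S'$ with every $\cal S'$-seminorm of its sum bounded by $\cal O(2^{-\nu M})$ for all $M$, after choosing $N$ large enough to absorb $2^{\nu(N'+d)}$ and the $\cal O(\nu)$ exceptional ball terms. Summing over $\nu$ and adding the $(a-a_{\chi,1})$-series then shows $A^{(2)}_\psi u$ converges rapidly in $\cal S'$ for every $u\in\cal S'$; the rearrangement of this double series is legitimate because the $k$-th term of $A^{(2)}_\psi u$ splits, by convergence of \eqref{Hsum-eq} in $S^d_{1,1}$, continuity of the $x$-frequency cut-offs, and continuity of $S^d_{1,1}\ni b\mapsto b(x,D)u_k\in\cal S'$ for fixed $u_k\in\cal F^{-1}\cal E'$, as the absolutely convergent sum of the corresponding terms of the summands of \eqref{Hsum-eq}, with the bounds above.

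With $A^{(2)}_\psi u$ convergent for every $u$, Theorem~\ref{a2a-thm} gives at once $D(a(x,D))=D(a^{(2)}(x,D))=\cal S'$, together with $a^{(2)}(x,D)u=A^{(2)}_\psi u$ and $a(x,D)u=A^{(1)}_\psi u+A^{(2)}_\psi u+A^{(3)}_\psi u$; Theorem~\ref{a123-thm} supplies the rapid $\cal S'$-convergence of the outer two series and that $a^{(1)}(x,D),a^{(3)}(x,D)$ are of type $1,1$ with type $1,1$ adjoints, while Theorem~\ref{tildeS-thm} supplies the continuity $a(x,D)\colon\cal S'\to\cal S'$ and that $a(x,D)^*$ is of type $1,1$. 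These are exactly the conclusions of Theorems~\ref{tdc-thm}--\ref{a2a-thm}. (One may note in passing that $a^{(2)}_{\chi,\varepsilon}=a_{\chi,\varepsilon}$ for all small $\varepsilon$, since $a^{(1)}_{\chi,\varepsilon}=a^{(3)}_{\chi,\varepsilon}=0$ there by Proposition~\ref{a13tdc-prop}, whence $a^{(2)}\in\widetilde S^d_{1,1}$; but this shortcut gives only $D(a^{(2)}(x,D))=\cal S'$, not the rapid $\cal S'$-convergence.) I expect the main obstacle to be the spectral bookkeeping in the middle paragraph: extracting from \eqref{Feep-eq} the sharp coronal localisation at scale $2^{k-\nu}$ with $\nu$-independent proportions and threshold, so that the corona criterion contributes constants free of $\nu$ and the rapidly decaying symbol seminorms $\cal O(2^{-\nu N})$ alone govern the summation over $\nu$; the cruder ball estimate \eqref{supp2'-eq}, whose inner radius degenerates as $\nu\to\infty$, would not be enough.
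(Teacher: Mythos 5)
Your argument is correct, but it takes a genuinely different route from the one in the paper. You sum H\"ormander's dyadic decomposition \eqref{Hsum-eq} first (in $\nu$), run the symmetric $k$-series on each piece $e_{2^{-\nu}}$ using the sharp support property \eqref{Feep-eq} and its $\cal O(2^{-\nu N})$-small seminorms, and then re-index $j=k-\nu$ and sum over $\nu$, justifying the rearrangement by absolute convergence. The paper instead keeps a single $k$-series and for each $k$ splits $a_k=a_{k,\chi,\varepsilon}+b_k$ at a $k$-\emph{dependent} scale $\varepsilon=2^{-k\theta}$, $\theta=\tfrac12$: the localised piece $a_{k,\chi,\varepsilon}(x,D)v_k$ is made rapidly decaying in $k$ by inserting the estimates \eqref{sigma-cnd} directly into the factorisation inequality, cf.\ \eqref{akkev-eq}--\eqref{akkev'-eq}, while the Spectral Support Rule traps $\supp\cal Fb_k(x,D)v_k$ in the widening corona \eqref{bkcorona-eq} with inner radius $\sim 2^{k/2}$ and outer $\sim 2^k$. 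It is therefore the paper's argument, not yours, that needs the $\theta_0<\theta_1$ case of Lemma~\ref{corona-lem}: after your re-indexing the annuli sit at the single scale $2^j$, so only $\theta_0=\theta_1=1$ is invoked, and your remark that you use ``Lemma~\ref{corona-lem} in its full generality $\theta_0<\theta_1$'' is a misstatement --- the re-indexing is exactly what spares you that case (and is also why \eqref{supp2'-eq} is insufficient, as you noted). One further small correction: the ``exceptional'' indices $k$ are $\cal O(1)$ in number rather than $\cal O(\nu)$, since $\supp\hat e_{2^{-\nu}}$ forces $|\eta|\ge 2^\nu$ so that $(e_{2^{-\nu}})_k(x,D)v_k=0$ for all $k\lesssim\nu$; this only simplifies your bookkeeping. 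As for what each approach buys: your route closely parallels the $\nu$-decomposition already used to prove Theorem~\ref{a*-thm}, making the role of the rapidly decaying $p_{\alpha,\beta}(e_{2^{-\nu}})$ transparent, but at the cost of a double sum; the paper's $k$-dependent-$\varepsilon$ device keeps a single sum, exploits the $N_{\chi,\varepsilon,\alpha}$-estimates \eqref{sigma-cnd} in exactly the form restated in Theorem~\ref{a*-thm}, and is the place where the generality $\theta_0<\theta_1$ in the appendix earns its keep.
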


\begin{proof}
The continuity on $\cal S'$ is assured by Theorem~\ref{tildeS-thm}. 
For the convergence of the series in the
paradifferential splitting, it is convenient to write,
in the notation of \eqref{Hsigma-eq} ff, 
\begin{equation}
  a(x,\eta)=(a(x,\eta)-a_{\chi,1}(x,\eta))+a_{\chi,1}(x,\eta),
\end{equation}
where $a-a_{\chi,1}$ satisfies \eqref{tdc-cnd} for $B=1$, 
so that Theorem~\ref{tdc-thm} applies to it. 
As $a_{\chi,1}$ is in
$\widetilde S^d_{1,1}$ like $a$ and $a-a_{\chi,1}$ (the latter
by Proposition~\ref{GB-prop}), one may reduce to the case in which
\begin{equation}
  \hat a(x,\eta)\ne0\implies \max(1,|\xi+\eta|)\le |\eta|. 
  \label{ahat-eq}
\end{equation}

Continuing under this assumption, it is according to
Theorems~\ref{a123-thm} and \ref{a2a-thm} enough to show for all $u\in \cal S'$ that there is
convergence of the two contributions to $a^{(2)}(x,D)u$,
\begin{equation}
  \sum_{k=0}^\infty (a^k-a^{k-h})(x,D)u_k,
\qquad
\sum_{k=1}^\infty a_k(x,D)(u^{k-1}-u^{k-h}).
  \label{2a2-eq}
\end{equation}
Since the terms here are functions of polynomial growth by
Proposition~\ref{pe13-prop}, it suffices to improve the estimates there; and
to do so for $k\ge h$.

Using H{\"o}rmander's localisation to a neighbourhood of $\cal T$,
cf.\ \eqref{chi1-eq}--\eqref{chi3-eq}, one arrives at
\begin{equation}
  \hat a_{k,\chi,\varepsilon}(\xi,\eta)=
  \hat a(\xi,\eta)\varphi(2^{-k}\xi)\chi(\xi+\eta,\varepsilon\eta),
  \label{akke-id}
\end{equation}
This leaves the remainder
$b_k(x,\eta)=a_k(x,\eta)-a_{k,\chi,\varepsilon}(x,\eta)$,
that applied to the above difference
$v_k=u^{k-1}-u^{k-h}
  =\cal F^{-1}((\varphi(2^{1-k}\cdot )-\varphi(2^{h-k}\cdot))\hat u)$
gives
\begin{equation}
  a_k(x,D)v_k=a_{k,\chi,\varepsilon}(x,D)v_k+b_k(x,D)v_k .
\end{equation}
To utilise the pointwise estimates, fix $N\ge\order_{\cal S'}(\hat u)$
so that $d+N\ne0$; and pick $\Psi\in C^\infty_0(\Rn)$ equal to $1$ in a neighbourhood of the corona 
$\tfrac{r}{R}2^{-1-h}\le |\eta|\le 1$ and equal to $0$
outside the set with $\tfrac{r}{R}2^{-2-h}\le |\eta|\le 2$. 
Taking the dilated function $\Psi(\eta/(R2^k))$ as the auxiliary function  
in the symbol factor,
the factorisation inequality \eqref{Fau*-eq} and Theorem~\ref{Fa-thm} give
\begin{equation}
  |a_{k,\chi,\varepsilon}(x,D)v_k(x)|
  F_{a_{k,\chi,\varepsilon}}(N,R2^k;x)v^*_k(N,R2^k;x)
  \end{equation}
which is estimated from above by
 \begin{equation} 
  cv^*_k(x)\sum_{|\alpha|=0}^{N+[n/2]+1}
  (\int_{r2^{k-h-2}\le |\eta|\le R 2^{k+1}} |(R2^k)^{|\alpha|-n/2}
   D^\alpha_\eta a_{k,\chi,\varepsilon}(x,\eta)|^2\,d\eta)^{1/2}.
\end{equation}
Here the ratio of the limits is $2R/(r2^{-h-2})>32$, 
so with extension to $R2^{k+1-L}\le |\eta|\le R2^{k+1}$,
there is $L\ge 6$ dyadic coronas.
This gives an estimate by $c(R2^k)^d L^{1/2}
N_{\chi,\varepsilon,\alpha}(a_k)$. In addition,
Minkowski's inequality gives
\begin{equation}
\begin{split}
  N_{\chi,\varepsilon,\alpha}(a_k)
&\le 
  \sup_{\rho>0}\rho^{|\alpha|-d}\int_{\Rn} |2^{kn}\check \varphi(2^{k}y)|
   (\int_{\rho\le |\eta|\le 2\rho}    |D^\alpha_\eta 
   a_{\chi,\varepsilon}(x-y,\eta)|^2\,\frac{d\eta}{\rho^n})^{1/2}\,dy
 \\
  &\le c N_{\chi,\varepsilon,\alpha}(a).
\end{split}  
\end{equation}
So  it follows from the above that
\begin{equation}
  |a_{k,\chi,\varepsilon}(x,D)v_k(x)|\le  cv^*_k(N,R2^k;x)
  \big(\sum_{|\alpha|\le N+[n/2]+1}c_{\alpha,\sigma}
  \varepsilon^{\sigma+n/2-|\alpha|}\big)
  L^{1/2} (R2^k)^{d}.
  \label{akkev-eq}
\end{equation}
Using Lemma~\ref{u*-lem} and taking $\varepsilon=2^{-k\theta}$, 
say for $\theta=1/2$ this gives
\begin{equation}
  |a_{k,\chi,2^{-k\theta}}(x,D)v_k(x)| \le c(1+|x|)^N 2^{-k(\sigma-1-2d-3N)/2}.
  \label{akkev'-eq}
\end{equation}
Choosing  $\sigma>3N+2d+1$, the series 
$\sum_k\dual{a_{k,\chi,\varepsilon}(x,D)v_k}{\phi}$ 
converges rapidly for $\phi\in \cal S$.

To treat $\sum_{k=0}^\infty b_k(x,D)v_k$ it is observed that 
$\hat a_{k,\chi,2^{-k\theta}}(x,\eta)=\hat a_k(x,\eta)$ holds in the set where
$\chi(\xi+\eta,2^{-k\theta}\eta)=1$, 
that is, when $2\max(1,|\xi+\eta|)\le  2^{-k\theta}|\eta|$, 
so by \eqref{ahat-eq},
\begin{equation}
  \supp \hat b_k\subset \Set{(\xi,\eta)}{2^{-1-k\theta}|\eta|\le 
  \max(1,|\xi+\eta|)\le |\eta|}.
\end{equation}
This implies by Theorem~\ref{supp-thm} that $\zeta=\xi+\eta$ is in
$\supp\cal Fb_k(x,D)v_k$ only if both 
\begin{gather}
  |\zeta|\le |\eta|\le R2^k
\\
  \max(1,|\zeta|)\ge 2^{-1-k\theta}|\eta|\ge r2^{k(1-\theta)-h-2}.  
\end{gather}
When $k$ fulfils $2^{k(1-\theta)}>2^{h+2}/r$, so that the last right-hand side is $>1$,
these inequalities give
\begin{equation}
  (r2^{-h-2})2^{k(1-\theta)}\le |\zeta|\le R2^k.
  \label{bkcorona-eq}
\end{equation}
This shows that the corona condition \eqref{DAC-cnd}
in Lemma~\ref{corona-lem} is fulfilled
for $\theta_0=1-\theta=1/2$ and $\theta_1=1$, and the growth condition 
\eqref{CM-cnd} is easily checked since both 
$a_{k,\chi,\varepsilon}(x,D)v_k$ and $a_k(x,D)v_k$ 
are estimated by $2^{k(N+d_+)}(1+|x|)^{N+d_+}$, 
as can be seen from \eqref{akkev'-eq} and Proposition~\ref{cutoff-prop},
respectively. Hence $\sum b_k(x,D)v_k$ converges rapidly.

For the series $  \sum_{k=0}^\infty |\dual{(a^k-a^{k-h})(x,D)u_k}{\phi}|$
it is not complicated to modify the above. Indeed, the pointwise estimates
of the $v_k^*$ are easily carried over to $u_k^*$, for $R2^k$ was used as
the outer spectral radius of $v_k$; and $r2^{k-h-1}$ may also be used as the
inner spectral radius of $u_k$. In addition the symbol $a^k-a^{k-h}$ can be
treated by replacing $\varphi(2^{-k}\xi)$ by $\psi(2^{-k}\xi)-\psi(2^{h-k}\xi)$
in \eqref{akke-id} ff., for the use of Minkowski's inequality will now give
the factor $\int|\psi-\psi(2^h\cdot)|\,dy$
in the constant. For the remainder
\begin{equation}
  \tilde b_k(x,D)u_k=(a^k-a^{k-h})(x,D)u_k
       -(a^k-a^{k-h})_{\chi,\varepsilon}(x,D)u_k
\end{equation}
one can apply the treatment of $b_k(x,D)v_k$ verbatim.
\end{proof}

\begin{rem}
The analysis in Theorem~\ref{sigma-thm} is also exploited in the $L_p$-theory
of type $1,1$-operators in \cite{JJ11lp}. Indeed, the main ideas of the above proof was used in
\cite[Sect.~5.3]{JJ11lp} to derive
certain continuity results in the Lizorkin--Triebel scale $F^s_{p,q}(\Rn)$ for $p<1$, which 
(except for a small loss of smoothness)
generalise results of Hounie and dos Santos Kapp \cite{HoSK09}.  
\end{rem}

\section{Final Remarks}   \label{final-sect}
In view of the satisfying results on type $1,1$-operators in $\cal S'(\Rn)$, 
cf.\ Section~\ref{split-sect}, and the continuity results in the scales 
$H^s_p$, $C^s_*$, $F^{s}_{p,q}$ and $B^{s}_{p,q}$ presented in
\cite{JJ11lp},
their somewhat unusual definition by vanishing frequency modulation in
Definition~\ref{a11-defn} should be well motivated.

As an open problem, it remains to characterise the type $1,1$-operators 
$a(x,D)$ that are
everywhere defined and continuous on $\cal S'(\Rn)$. For this it was
shown above to be sufficient that $a(x,\eta)$ is in $\widetilde S^d_{1,1}(\Rn\times \Rn)$, 
and it could of course be conjectured that this is necessary as well.

Similarly, since the works of Bourdaud and H{\"o}rmander, 
cf.\ \cite[Ch.~IV]{Bou83}, \cite{Bou88}, \cite{H88,H89} and also \cite{H97}, 
it has remained an open problem to determine 
\begin{equation}
  \BBb B(L_2(\Rn))\cap \OP(S^0_{1,1}).  
  \label{BO-eq}
\end{equation} 
Indeed, this set was shown by Bourdaud to contain the self-adjoint subclass
$\OP(\widetilde{S}^0_{1,1})$, and this sufficient condition  
has led some authors to a few unfortunate statements, for example that lack
of $L_2$-boundedness for $\OP(S^0_{1,1})$ is 
``attributable to the lack of self adjointness''. 
But self-adjointness is not necessary, since already 
Bourdaud, by modification of Ching's operator \eqref{ching-eq},
gave an example \cite[p.~1069]{Bou88} of an operator 
$\sigma(x,D)$ in $ {\BBb B}(L_2)\bigcap\OP(S^0_{1,1}\setminus \widetilde S^0_{1,1})$;
that is, this $\sigma(x,D)^*$ is not of type $1,1$.

However, it could be observed that 
$N_{\chi,\varepsilon,\alpha}(a_\theta)=\cal O(\varepsilon^{n/2-|\alpha|})$  by Lemma~\ref{Heps-lem}
is valid for Ching's symbol $a_\theta$ and that this estimate is sharp for the $L_2$-unbounded version
of $a_\theta(x,D)$, by the last part of Example~\ref{Heps-exmp}. 
Therefore, the condition
\begin{equation}
  N_{\chi,\varepsilon,\alpha}(a)=o(\varepsilon^{n/2-|\alpha|})
  \quad\text{ for $\varepsilon\to0$}
\end{equation}
is conjectured to be necessary for
$L_2$-continuity of a given $a(x,D)$ in $\OP(S^0_{1,1})$.

\appendix
\section{Dyadic Corona Criteria}   \label{corona-app}
Convergence of a series $\sum_{j=0}^\infty u_j$ of
temperate distributions follows if the $u_j$ both fulfil a growth condition
and have their spectra in suitable dyadic coronas. This is a special
case of Lemma~\ref{corona-lem}, which for $\theta_0=\theta_1=1$  
was given by Coifman and Meyer \cite[Ch.~15]{MeCo97} without
arguments.

Extending the proof given in \cite{JoSi08}, 
the refined version in Lemma~\ref{corona-lem} allows the inner and outer radii of the
spectra to grow at different exponential rates $\theta_0<\theta_1$, even though the number of
overlapping spectra increases with $j$. This is crucial for Theorem~\ref{sigma-thm},
so a full proof is given.

\begin{lem}
  \label{corona-lem}
$1^\circ$~Let $(u_j)_{j\in \N_0}$ be a sequence in $\cal S'(\Rn)$ fulfilling 
that there exist $A>1$ and $\theta_1\ge\theta_0>0$ such that 
$\supp \hat u_0\subset\{\,\xi\mid |\xi|\le A\,\}$
while for $j\ge1$
   \begin{equation}
   \supp \hat u_j\subset\{\,\xi\mid 
       \tfrac{1}{A}2^{j\theta_0}\le |\xi|\le A2^{j\theta_1} \,\}, 
  \label{DAC-cnd}
   \end{equation}
and that for suitable constants $C\ge0$, $N\ge0$,
\begin{equation}
  |u_j(x)|\le C 2^{jN\theta_1}(1+|x|)^N \text{ for all $j\ge0$}.
  \label{CM-cnd}  
\end{equation}
Then $\sum_{j=0}^\infty u_j$ converges rapidly 
in $\cal S'(\Rn)$ to a distribution $u$,
for which $\hat u$ is of order $N$.

$2^\circ$~For every $u\in \cal S'(\Rn)$ both \eqref{DAC-cnd} and
\eqref{CM-cnd} are fulfilled for $\theta_0=\theta_1=1$ by the functions  
$u_0=\Phi_0(D)u$ and $u_j=\Phi(2^{-j}D)u$
when $\Phi_0,\Phi\in C^\infty_0(\Rn)$ and $0\notin\supp\Phi$. 
In particular this is the case for a 
Littlewood--Paley decomposition 
$1=\Phi_0+\sum_{j=1}^\infty \Phi(2^{-j}\xi)$.
\end{lem}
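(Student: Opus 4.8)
The plan is to check $2^\circ$ directly, then derive the convergence in $1^\circ$ from a spectral gap argument, and read off the order of $\hat u$ from the same estimate. For $2^\circ$: writing $\supp\Phi\subset\{\,\xi\mid r\le|\xi|\le R\,\}$ with $r>0$, one has $\supp\widehat{u_j}=\supp(\Phi(2^{-j}\cdot)\hat u)\subset\{\,\xi\mid 2^jr\le|\xi|\le 2^jR\,\}$, which is of the form \eqref{DAC-cnd} with $\theta_0=\theta_1=1$, while $u_0=\Phi_0(D)u$ has $\supp\widehat{u_0}$ compact. With $N=\order_{\cal S'}(\hat u)$ the bound \eqref{CM-cnd} (for $\theta_1=1$) is then immediate from Lemma~\ref{u*-lem}, since $|\Phi(2^{-j}D)u(x)|\le(\Phi(2^{-j}D)u)^*(N,R2^j;x)\le C2^{jN}(1+|x|)^N$ with $C$ independent of $j$.

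For $1^\circ$, note that \eqref{CM-cnd} makes each $u_j$ a continuous function of polynomial growth, so $\dual{u_j}{\phi}=\int u_j(x)\phi(x)\,dx$ for $\phi\in\cal S(\Rn)$. Fix a radial $\Psi\in C^\infty_0(\Rn)$ with $\Psi\equiv1$ on $\Bbar(0,\tfrac1{2A})$ and $\supp\Psi\subset\Bbar(0,\tfrac1A)$. For $j\ge1$ the support of $\Psi(2^{-j\theta_0}\cdot)$ is disjoint from $\supp\widehat{u_j}$, hence $u_j=(1-\Psi(2^{-j\theta_0}D))u_j$ and so $\dual{u_j}{\phi}=\dual{u_j}{\phi_j}$ with $\phi_j:=(1-\Psi(2^{-j\theta_0}D))\phi\in\cal S(\Rn)$. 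Since $\widehat{\phi_j}=(1-\Psi(2^{-j\theta_0}\cdot))\hat\phi$ vanishes for $|\xi|<\tfrac1{2A}2^{j\theta_0}$, the rapid decay of $\hat\phi$ turns into smallness in $j$: for all $L,M$ there is $c_{M,L}$ with $p_M(\phi_j)\le c_{M,L}\,2^{-j\theta_0 L}\,p_{M+L}(\phi)$, $j\ge1$, with the seminorms of \eqref{pN-eq}. Combined with \eqref{CM-cnd},
\begin{equation}
  |\dual{u_j}{\phi}|=|\dual{u_j}{\phi_j}|\le C2^{jN\theta_1}\int(1+|x|)^N|\phi_j(x)|\,dx\le c_L\,2^{j(N\theta_1-\theta_0L)}\,p_{N+n+1+L}(\phi),\qquad j\ge1,
  \label{corona-plan-est}
\end{equation}
whereas the term $j=0$ is $\le c\,p_{N+n+1}(\phi)$. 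Since $\theta_0>0$, for each $M$ one may fix $L$ with $\theta_0L\ge N\theta_1+M$, so $\sum_{j\ge J}|\dual{u_j}{\phi}|=\cal O(2^{-JM}p_{N+n+1+L}(\phi))$; as $p_{N+n+1+L}$ is bounded on bounded subsets of $\cal S(\Rn)$, the partial sums converge rapidly in $\cal S'(\Rn)$ to a limit $u$. Estimate \eqref{corona-plan-est} is applied termwise, so the increase with $j$ of the number of overlapping coronas when $\theta_0<\theta_1$ is immaterial. For the order of $\hat u$, run the same computation with $\phi=\hat\psi$, $\psi\in\cal S(\Rn)$: then $\widehat{(\hat\psi)_j}$ is supported in $\supp\psi\cap\{\,|\xi|\ge\tfrac1{2A}2^{j\theta_0}\,\}$ and again carries a factor $2^{-j\theta_0L}$ for every $L$, so summation over $j$ bounds $|\dual{\hat u}{\psi}|=|\dual{u}{\hat\psi}|$ by a fixed seminorm of $\psi$; keeping track of the index, one gets $\hat u$ of (temperate) order $N$ in the sense of \eqref{upN-ineq}.

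The step I expect to be the crux is the estimate behind \eqref{corona-plan-est}: quantifying how the weighted $C^M$ seminorms of the test function degrade under removal of the frequencies $|\xi|<\tfrac1{2A}2^{j\theta_0}$, and doing this in \emph{every} order $L$ so that the gain $2^{-j\theta_0L}$ is genuine. This is exactly what lets the fixed polynomial factor $2^{jN\theta_1}$ from \eqref{CM-cnd} be absorbed uniformly on bounded test sets, and it is precisely here that the disparity $\theta_0\le\theta_1$\,---\,and the unboundedly many overlapping spectra it entails\,---\,is shown to cause no harm.
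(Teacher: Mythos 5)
Your $2^\circ$ coincides with the paper's. Your $1^\circ$ is correct but takes a genuinely different route, although both proofs rest on the same fact: $\hat u_j$ lives in $\{\,\xi\mid |\xi|\ge A^{-1}2^{j\theta_0}\,\}$, so removing the low frequencies from a Schwartz test function costs a factor $2^{-j\theta_0 L}$ for every $L$. The paper constructs from the continuous partition of unity $1=\psi_0(\xi)+\int_1^\infty\psi(\xi/t)\,dt/t$ a compactly supported $\psi_j$ equal to $1$ on $\supp\hat u_j$, then estimates $\dual{u_j}{\overline\varphi}$ by Cauchy--Schwarz against $\Nrm{(1+|x|^2)^{(N+n)/2}\cal F^{-1}(\psi_j\hat\varphi)}{2}$, exploiting $(1+|x|^2)^{N+n}\cal F^{-1}=\cal F^{-1}(1-\lap)^{N+n}$ and the corona support of $\psi_j$. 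You instead use that $\Psi(2^{-j\theta_0}D)$ annihilates $u_j$, transfer $1-\Psi(2^{-j\theta_0}D)$ onto the test function, and estimate $\phi_j$ in the weighted $L^\infty$ seminorms $p_M$. This is more elementary (no $L^2$ detour, no continuous partition of unity), and your key inequality $p_M(\phi_j)\le c_{M,L}2^{-j\theta_0 L}p_{M+L}(\phi)$ has a very clean justification from the moment conditions of $\check\Psi$: since $\Psi\equiv1$ near $0$ one has $\int\check\Psi(y)\,y^\gamma\,dy\propto D^\gamma\Psi(0)=0$ for $\gamma\ne0$, so writing $\phi_j(x)=\int\check\Psi(y)\bigl(\phi(x)-\phi(x-2^{-j\theta_0}y)\bigr)\,dy$ and Taylor-expanding to order $L$ leaves only the remainder, which carries the factor $(2^{-j\theta_0})^L$; this also pins down the seminorm index, which your Fourier-side sketch does not quite fix. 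Two minor points: take $\supp\Psi$ inside the \emph{open} ball $B(0,\tfrac1A)$, since with $\supp\Psi\subset\Bbar(0,\tfrac1A)$ the supports of $\Psi(2^{-j\theta_0}\cdot)$ and $\hat u_j$ may share the sphere $|\xi|=2^{j\theta_0}/A$, and a smooth function vanishing on the support of a distribution need not annihilate it; and the order-$N$ claim for $\hat u$ is gestured at rather than established in your write-up, just as in the paper, which is harmless since it is not used elsewhere.
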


\begin{proof}
In $2^\circ$ it is clear that $\Phi$ is supported in a corona, say
$\{\,\xi\mid \tfrac{1}{A}\le |\xi|\le A\,\}$ for a large $A>0$; hence
\eqref{DAC-cnd}. 
\eqref{CM-cnd} follows from the proof of Lemma~\ref{u*-lem}.

The proof of $1^\circ$ exploits 
a well-known construction of an auxiliary function:
taking $\psi_0\in C_0^\infty (\Rn)$ depending on $|\xi|$ alone and
so that $0\le \psi_0\le 1$ with $\psi_0(\xi)=1$ for $|\xi|\le 1/(2A)$ while
$\psi_0(\xi)=0$ for $|\xi|\ge 1/A$, then
\begin{equation}
  \frac{d}{dt}\psi_0(\frac{\xi}{t})=\psi(\frac{\xi}{t})\frac{1}{t}
  \quad\text{for}\quad \psi(\xi)=-\xi\cdot \nabla\psi_0(\xi),
  \label{ddtpsi-eq}
\end{equation}
which by integration for $1\le t\le\infty $ 
gives an uncountable partition of unity 
\begin{equation}
  1=\psi_0(\xi)+\int_1^\infty \psi(\frac{\xi}{t})\,\frac{dt}{t},
  \quad \xi\in \Rn.
\end{equation}
Clearly the support of $\psi(\xi/t)$ is compact and given by
$A|\xi|\le t\le 2A|\xi|$ when $\xi$ is fixed.
For $j\ge 1$ this implies
\begin{equation}
  \hat u_j= \hat u_j\psi_0
                +\hat u_j\int_1^\infty \psi(\frac{\xi}{t})\,\frac{dt}{t}
 =\hat u_j
  \int_{2^{j\theta_0}}^{A^2 2^{j\theta_1+1}}\psi(\frac{\xi}{t})\,\frac{dt}{t}.
\end{equation}
Defining $\psi_j\in C^\infty_0(\Rn)$ as the last integral here, 
$\psi_j=1$ on $\supp \hat u_j$; so if $\varphi\in \cal S$,
\begin{equation}
  |\dual{u_j}{\overline{\varphi}}|\le 
  \Nrm{(1+|x|^2)^{-\tfrac{N+n}{2}}u_j}{2}
  \Nrm{(1+|x|^2)^{\tfrac{N+n}{2}}\cal F^{-1}(\psi_j\hat\varphi)}{2}.
\end{equation}
The first norm is $\cal O(2^{N\theta_1 j})$ by \eqref{CM-cnd}. 
For the second, note that
\begin{equation}
  \supp \psi_j\subset \set{\xi\in \Rn}{A^{-1}2^{j\theta_0-1}\le |\xi|
  \le A2^{j\theta_1+1}}
\end{equation}
and $\nrm{D^\alpha\psi_j}{\infty }\le 
   2^{-j\theta_0|\alpha|}\nrm{D^\alpha\psi}{\infty }/|\alpha|$ 
for $\alpha\ne0$ while 
$\nrm{\psi_j}{\infty }\le \op{diam}(\psi_0(\Rn))\le 1$ by \eqref{ddtpsi-eq}.
In addition the identity $(1+|x|^2)^{N+n}\cal F^{-1}=\cal F^{-1}(1-\lap)^{N+n}$ gives
for arbitrary $k>0$,
\begin{multline}
   \nrm{(1+|x|^2)^{N+n}\cal F^{-1} (\psi_j\hat\varphi)}{2}
\\
  \le\sum_{|\alpha|,|\beta|\le N+n}
   c_{\alpha,\beta} \nrm{D^\alpha\psi_j}{\infty}
   \nrm{(1+|\xi|)^{k+n/2}D^{\beta}\hat\varphi}{\infty}
   (\int_{2^{j\theta_0-1}/A}^\infty r^{-1-2k}\,dr)^{1/2}.
\end{multline}
Here $\nrm{D^\alpha \psi_j}{\infty }=\cal O(1)$,
so because of the $L_2$-norm 
the above is $\cal O(2^{-jk\theta_0})$ for every $k>0$.

Hence $\dual{u_j}{\overline{\varphi}}=
\cal O(2^{j(\theta_1N-\theta_0k)})$, so
$k>N\theta_1/\theta_0$ yields that $\sum_{j=0}^\infty \dual{u_j}{\varphi}$
converges.
\end{proof}

\begin{rem}   \label{corona-rem}
The above proof yields that the conjunction of 
\eqref{DAC-cnd} and \eqref{CM-cnd} implies 
$\dual{u_j}{\varphi}=\cal O(2^{-jN})$ for all  $N>0$; 
hence there is \emph{rapid} convergence of $u=\sum_{j=0}^\infty  u_j$ in $\cal S'$ in the
sense that 
$\dual{u-\sum_{j<k}u_j}{\varphi}=\sum_{j\ge k}\dual{u_j}{\varphi}= 
\cal O(2^{-kN})$ for $N>0$, $\varphi\in \cal S$.
\end{rem}

\section{The Spectral Support Rule}  \label{spectral-app}
To control the spectrum of $x\mapsto a(x,D)u$, i.e.\ the support of 
$\xi\mapsto \cal F a(x,D)u$, there is a simple rule which is
recalled here for the reader's convenience. 

Writing $\cal Fa(x,D)\cal F^{-1}(\hat u)$ instead, 
the question is clearly how the support of $\cal Fu$ is changed by the
conjugated operator $\cal Fa(x,D)\cal F^{-1}$.
In terms of its distribution kernel $\cal K(\xi,\eta)$,
cf.\ \eqref{KaFK-eq}, 
one should expect the spectrum of $a(x,D)u$ to be contained in 
\begin{equation}
  \Xi:=\supp\cal K\circ\supp\cal F u
  =\{\,\xi\in \Rn\mid \exists \eta\in \supp\hat u\colon (\xi,\eta)\in
  \supp\cal K \,\}. 
  \label{Xi-id} 
\end{equation}
For $\supp\cal Fu\Subset \Rn$ this was proved in \cite{JJ05DTL}; but in
general the closure $\overline{\Xi}$ should be used instead:

\begin{thm}
  \label{supp-thm}
Let $a\in  S^\infty_{1,1}(\Rn\times\Rn)$ and suppose 
$u\in D(a(x,D))$ has the property that \eqref{aPsi-eq} holds in the topology of $\cal S'(\Rn)$
for some $\psi\in C^\infty_0(\Rn)$ equal to $1$ around the origin. Then 
\begin{gather}
   \supp\cal F(a(x,D)u)\subset\overline{\Xi},
  \label{Xi-eq}
 \\
  \Xi=\bigl\{\,\xi+\eta \bigm| (\xi,\eta)\in \supp\cal F_{x\to\xi\,} a,\ 
     \eta\in \supp\cal F u \,\bigr\}.
  \label{Sigma-eq}
\end{gather}
When $u\in \cal F^{-1}\cal E'(\Rn)$ the $\cal S'$-convergence
holds automatically and $\Xi$ is closed for such $u$.
\end{thm}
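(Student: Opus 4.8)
The plan is to reduce everything to distributions with compact spectrum and then pass to the limit in \eqref{aPsi-eq}, using that $\cal F$ is continuous on $\cal S'(\Rn)$ --- which is exactly why the hypothesis demands that \eqref{aPsi-eq} hold in the $\cal S'$-topology rather than merely in $\cal D'(\Rn)$. So I would first treat $u\in\cal F^{-1}\cal E'(\Rn)$: choosing $\chi\in C^\infty_0(\Rn)$ with $\chi\equiv1$ near $\supp\cal Fu$, formula \eqref{aFE-id} (with the $\cal S$-part taken to be $0$) gives $a(x,D)u=\OP(b)u$ for $b:=a(1\otimes\chi)\in S^{-\infty}$. For $m$ so large that $\psi(2^{-m}\cdot)\equiv1$ on $\supp\chi$, the operator $\OP(\psi(2^{-m}D_x)a(x,\eta)\psi(2^{-m}\eta))$ acts on $u$ exactly as $\OP(\psi(2^{-m}D_x)b)$, since only the symbol near $\supp\cal Fu$ is felt; and as $\psi(2^{-m}D_x)b\to b$ in $S^{-\infty}$ by a convolution estimate (cf.\ \cite[Lem.~2.1]{JJ08vfm}) while $b\mapsto\OP(b)u$ is continuous $S^{-\infty}\to\cal O_M(\Rn)$, the right-hand side converges in $\cal O_M(\Rn)\subset\cal S'(\Rn)$ to $\OP(b)u$, independently of $\psi$; so the $\cal S'$-convergence in \eqref{aPsi-eq} is automatic here. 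For the spectrum, the conjugated operator $\cal F\OP(b)\cal F^{-1}$ has kernel $(2\pi)^{-n}\hat a(\zeta-\eta,\eta)\chi(\eta)$, cf.\ \eqref{KaFK-eq}, whose support lies in that of the kernel $\cal K$ of $a$; applying it to $\cal Fu\in\cal E'$ as in \cite{JJ05DTL} gives
\[
  \supp\cal F(a(x,D)u)\subset\supp\cal K\circ\supp\cal Fu=\bigl\{\,\xi+\eta\bigm|(\xi,\eta)\in\supp\hat a,\ \eta\in\supp\cal Fu\,\bigr\}=\Xi.
\]
Finally $\Xi$ is closed for such $u$: it is the image of the closed set $\supp\hat a\cap(\Rn\times\supp\cal Fu)$ under $(\xi,\eta)\mapsto\xi+\eta$, a proper, hence closed, map on $\Rn\times\supp\cal Fu$ because $\supp\cal Fu$ is compact.

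For general $u\in D(a(x,D))$ with \eqref{aPsi-eq} convergent in $\cal S'(\Rn)$, the key point is the factorisation $\OP(\psi(2^{-m}D_x)a(x,\eta)\psi(2^{-m}\eta))u=c_m(x,D)w_m$ with $c_m:=\psi(2^{-m}D_x)a\in S^d_{1,1}$ and $w_m:=\psi(2^{-m}D)u\in\cal F^{-1}\cal E'$, which is legitimate because $\cal Fw_m=\psi(2^{-m}\cdot)\cal Fu$ has compact support and $\cal F^{-1}\cal E'\subset D(c_m(x,D))$ by \eqref{aFE-eq}. Applying the compact spectrum case to $c_m(x,D)w_m$, and using $\supp\hat c_m\subset\supp\hat a$ together with $\supp\cal Fw_m\subset\supp\cal Fu$, gives $\supp\cal F\bigl(\OP(\psi(2^{-m}D_x)a(x,\eta)\psi(2^{-m}\eta))u\bigr)\subset\Xi$ for every $m$. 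Since by hypothesis these distributions converge to $a(x,D)u$ in $\cal S'(\Rn)$, their Fourier transforms converge to $\cal F(a(x,D)u)$ there as well; hence for $\varphi\in C^\infty_0(\Rn)$ with $\supp\varphi\cap\overline\Xi=\emptyset$ one gets $\dual{\cal F(a(x,D)u)}{\varphi}=\lim_m\dual{\cal F(\OP(\psi(2^{-m}D_x)a(x,\eta)\psi(2^{-m}\eta))u)}{\varphi}=0$, so $\supp\cal F(a(x,D)u)\subset\overline\Xi$. For $u\in\cal F^{-1}\cal E'$ this is $\Xi$ by the closedness shown above.

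Everything is routine once the factorisation $\OP(\psi(2^{-m}D_x)a(x,\eta)\psi(2^{-m}\eta))u=c_m(x,D)w_m$ is recorded; the two points that deserve care are the automatic $\cal S'$-convergence in the compact spectrum case (essentially a recap of the constructions in \cite{JJ08vfm}) and the elementary remark that an $\cal S'$-limit of distributions all supported in a fixed set $\Xi$ is supported in $\overline\Xi$ --- which is precisely what makes the strengthened mode of convergence in the hypothesis unavoidable. I do not foresee a genuine obstacle beyond this bookkeeping.
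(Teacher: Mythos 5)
Your proof is correct, and for the general case it takes a genuinely different route from what the paper indicates. The paper's own text defers the general case to~\cite{JJ08vfm}, emphasising that the equality $\dual{\cal Fa(x,D)\cal F^{-1}\hat u}{v}=\dual{\cal K}{v\otimes\hat u}$ is the delicate step (the pairing with the distribution $\cal K$ needs the remarks to \cite[Def.~3.1.1]{H} to make sense), and Remark~\ref{supp-rem} only proves the compact-spectrum case, by mollifying $\hat u$ into $C^\infty_0$. You instead observe the factorisation $\OP(\psi(2^{-m}D_x)a(x,\eta)\psi(2^{-m}\eta))u=c_m(x,D)w_m$ with $c_m=\psi(2^{-m}D_x)a$ and $w_m=\psi(2^{-m}D)u\in\cal F^{-1}\cal E'$\,---\,exactly the rewriting $a(x,D)u=\lim_m a^m(x,D)u^m$ noted in Remark~\ref{cutoff-rem}\,---\,which reduces every approximant to the compact-spectrum case. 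Combined with $\supp\hat c_m\subset\supp\hat a$ and $\supp\cal Fw_m\subset\supp\cal Fu$, this gives $\supp\cal F(a^m(x,D)u^m)\subset\Xi$ for every $m$, and the assumed $\cal S'$-convergence then yields $\supp\cal F(a(x,D)u)\subset\overline\Xi$ by testing against $C^\infty_0$ functions disjoint from $\overline\Xi$. This completely sidesteps the distributional pairing $\dual{\cal K}{v\otimes\hat u}$ and makes transparent why the $\cal S'$-mode of convergence in the hypothesis is exactly what is needed; it is arguably cleaner than, and is not spelled out in, the paper. Your treatment of the compact-spectrum case (reduction to $b=a(1\otimes\chi)\in S^{-\infty}$ via~\eqref{aFE-id}, convergence in $\cal O_M$ of the modulated operators, and an appeal to the known spectral rule for $S^{-\infty}$) matches the reduction in Remark~\ref{supp-rem} in spirit, though the paper gets at the $S^{-\infty}$ case by mollifying $\hat u$ rather than citing~\cite{JJ05DTL}. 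The properness argument for the closedness of $\Xi$ when $\supp\cal Fu$ is compact is standard and correct.
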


The reader is referred to \cite{JJ08vfm} for the deduction of this
from the kernel formula. Note that whilst \eqref{KaFK-eq} yields \eqref{Sigma-eq}, 
it suffices for \eqref{Xi-eq} to take any $v\in C^\infty_0(\Rn)$ with support 
disjoint from $\Xi$ and verify that
\begin{equation}
  \dual{\cal Fa(x,D)\cal F^{-1}\hat u}{v}=\dual{\cal K}{v\otimes \hat u}=0.
\end{equation}
Here the middle expression makes sense 
as $\dual{(v\otimes \hat u)\cal K}{1}$, as noted in \cite{JJ08vfm}, using the
remarks to \cite[Def.~3.1.1]{H}. 
However, the first equality sign is in general not trivial to justify: the limit in Definition~\ref{a11-defn}
is decisive for this.

\begin{rem}  \label{supp-rem}
There is a simple proof of \eqref{Xi-eq} in the main case that $\hat u\in\cal E'$: 
If $a\in S^d_{1,0}$ and $v$ is as above, then
\eqref{Xi-id} yields $\op{dist}(\supp \cal K, \supp(v\otimes \hat u))>0$  
since $\supp\hat u\Subset\Rn$. 
So with $\hat u_\varepsilon=\varphi_\varepsilon*\hat u$ 
for some $\varphi\in C^\infty_0(\Rn)$ with $\hat \varphi(0)=1$,
$\varphi_\varepsilon=\varepsilon^{-n}\varphi(\cdot /\varepsilon)$,
all sufficiently small $\varepsilon>0$ give
\begin{equation}
  \supp\cal K\bigcap \supp v\otimes \hat u_\varepsilon=\emptyset.
\end{equation} 
Therefore one has, since $\hat u_\varepsilon\in C^\infty_0(\Rn)$,
\begin{equation}
  \dual{\cal Fa(x,D)\cal F^{-1}\hat u}{v}
   = \lim_{\varepsilon\to0}   \dual{\cal Fa(x,D)\cal F^{-1}\hat u_\varepsilon}{v}
   = \lim_{\varepsilon\to0}\dual{\cal K}{v\otimes \hat u_\varepsilon}=0.
  \label{FaFuv-eq}
\end{equation}
For general $b(x,\eta)$ in $S^d_{1,1}$ one may set
$a(x,\eta)=b(x,\eta)\chi(\eta)$ for a $\chi\in C^\infty_0$ equal to
$1$ on an open ball containing $\supp\hat u$. Then $a$ is in
$S^{-\infty}$ with associated kernel $\cal K_a(\xi,\eta)=\cal K_b(\xi,\eta)\chi(\eta)$ because of 
\eqref{KaFK-eq}. Moreover, the set $\Xi$ is unchanged by this replacement, so \eqref{FaFuv-eq}
gives
\begin{equation}
  \dual{\cal Fb(x,D)\cal F^{-1}\hat u}{v}=\dual{\cal Fa(x,D)\cal F^{-1}\hat u}{v}
  =\lim_{\varepsilon\to0}\dual{\cal K_a}{v\otimes\hat u_\varepsilon}=0.
\end{equation}
\end{rem}

The argument in Remark~\ref{supp-rem} clearly covers the
applications of Theorem~\ref{supp-thm} in this paper. 

%
\providecommand{\bysame}{\leavevmode\hbox to3em{\hrulefill}\thinspace}
\providecommand{\MR}{\relax\ifhmode\unskip\space\fi MR }
\providecommand{\MRhref}[2]{%
  \href{http://www.ams.org/mathscinet-getitem?mr=#1}{#2}
}
\providecommand{\href}[2]{#2}

\end{document}